\newtheorem{thm}{Theorem}[section]
\newtheorem{lemma}[thm]{Lemma}
\newtheorem{cor}[thm]{Corollary}
\newtheorem{claim}{Claim}[thm]
\newtheorem{example}[thm]{Example}
\newtheorem{prop}[thm]{Proposition}
\newtheorem{fact}[thm]{Fact}
\newtheorem{question}[thm]{Question}
\theoremstyle{definition}
\newtheorem{defn}[thm]{Definition}
\theoremstyle{remark}
\newtheorem{remark}[thm]{Remark}
\DeclareMathOperator{\cf}{cf}
\DeclareMathOperator{\dom}{dom}
\DeclareMathOperator{\rng}{rng}
\renewcommand{\mid}{\mathrel{|}\allowbreak}
\renewcommand{\restriction}{\mathbin\upharpoonright}
\newenvironment{cproof}{\paragraph{\emph{Proof.\,}}}{\hfill{$\boxminus$}\par\vspace{2mm}}
\def\borelstar{Borel\nobreak\hspace{1pt}$^*$}
\def\Orb{\mbox{\rm Orb}}
\title{On Borel subsets of Generalized Baire Spaces}
\author{Tapani Hyttinen}
\address{Department of Mathematics and Statistics, University of Helsinki, Helsinki 00560, Finland.}
\author{Miguel Moreno}
\address{Department of Mathematics and Statistics, University of Helsinki, Helsinki 00560, Finland.}
\author{Jouko Väänänen}
\address{Department of Mathematics and Statistics, University of Helsinki, Helsinki 00560, Finland.}
\begin{document}
\begin{abstract}
We develop Descriptive Set Theory in Generalized Baire Spaces without assuming $\kappa^{<\kappa}=\kappa$. We point out that without this assumption the basic topological concepts of these spaces have to be slightly modified in order to obtain a meaningful theory. This modification has no effect if $\kappa^{<\kappa}=\kappa$. After developing the basic theory we apply it to the question whether the orbits of models  of a fixed cardinality $\kappa$ in the space  $\kappa^\kappa$ are $\kappa$-Borel in our generalized sense. It turns out that this question depends, as is the case when $\kappa^{<\kappa}=\kappa$, on stability theoretic properties (structure vs. non-structure) of the first order theory of the model.
\end{abstract}

\date{\today}

\maketitle

\section{Introduction}

We develop Descriptive Set Theory in Generalized Baire Spaces without cardinality arithmetic assumptions, such as the Continuum Hypothesis, or the assumption $\kappa=\kappa^{<\kappa}$ for the relevant $\kappa$. In our main results, we use a variant of the usual product topology rather than the so-called bounded topology, as we think it is more natural for our results in the case $\kappa<\kappa^{<\kappa}$, which is our main concern. Both concepts will be defined and reviewed below, and we will give reasons for using the former. We also observe that in order to develop Descriptive Set Theory in a meaningful way, we have to focus on unions of at most $\kappa$ basic open sets, for the relevant $\kappa$, rather than on unions of \emph{arbitrary} families of basic open sets. Otherwise there would be simply too many  open sets. Of course, this does not make any difference if we assume $\kappa^{<\kappa}=\kappa$, or if we use the product topology. 

Our applications come from model theory. Indeed, model theory is a natural framework for Descriptive Set Theory in Generalized Baire Spaces. We cannot rely on natural sciences as  a guide and a source of inspiration in the same way as we  do in the classical Baire space $\omega^\omega$. The idea of a measurement with an $\omega_1$-sequence of successive improvements  does not arise in natural sciences in any apparent way. On the other hand, uncountable models are essential in the study of first order, as well as infinitary theories, as e.g. the Classification Theory of Shelah \cite{MR1083551} clearly demonstrates. 

It is natural to ask e.g. whether the orbit (i.e. isomorphism type) of a model is in some natural sense Borel, rather than merely analytic. Such investigation has been already done under cardinality arithmetic assumptions (\cite{MR1111753}, \cite{MR3235820},\cite{MR3724375}, \cite{MR4194559}, \cite{MR4855463}). The purpose of this paper is try to prove these results without extra assumptions.

Section 2 of the paper introduces and reviews the basic topological concepts that we need. In particular, we indicate the difficulties we run into with the bounded topology when $\kappa<\kappa^{<\kappa}$, and what our solution for avoiding them is. Section 3 is devoted to the analogue of \borelstar\ in the case $\kappa<\kappa^{<\kappa}$. Here \borelstar\ refers to the definition of Borel sets via Borel codes. Section 3 also introduces the concept of $\kappa$-$\Sigma^1_1$-definability and establishes its main properties. Finally, Section 3 establishes a Hierarchy Theorem for our concept of $\kappa$-Borelness.

In section 4 we prove, that if $T$ is a countable non-classifiable first order theory and $\kappa>\omega$ is regular, then the orbits of some models of $T$ of cardinality $\kappa$ are not $\kappa$-Borel and the isomorphism relation of models of $T$ of cardinality $\kappa$ is not $\kappa$-Borel. Previous results to the same effect (e.g. \cite{MR3235820}) used the assumption $\kappa^{<\kappa}=\kappa$. Of course, this is in sharp contrast to the fact that the orbits of all countable models are Borel \cite{MR200133}. 

In Section 5 we consider a regular $\kappa$ with $\kappa^\omega=\kappa$. Suppose $T$ is a countable $\omega$-stable NDOP shallow first order theory. We show that the orbits of all models of $T$ of cardinality $\kappa$ are $\kappa$-Borel. Moreover, if $T$ has only at most $\kappa$ non-isomorphic models of cardinality $\kappa$, then  the isomorphism relation of models of $T$ of cardinality $\kappa$ is  $\kappa$-Borel. Again,  previous results to the same effect (e.g. \cite{MR3235820}) use the assumption $\kappa^{<\kappa}=\kappa$. The results of Section 5 do not seem at all optimal to us, so there is probably  room for improvement.

This project has received funding from the Academy of Finland (decision number 368671) and the European Research Council (ERC) under the
European Union’s Horizon 2020 research and innovation programme (grant agreement No
101020762).

\section{Basic  topology}

In generalized Descriptive Set Theory the set $\kappa^\kappa$ is usually, and we take here the same starting point,  endowed with the so-called \emph{bounded} topology generated by the basic open sets 
\begin{equation}\label{old}
N_\eta=\{\zeta\in \kappa^\kappa\mid\eta\subseteq \zeta\},
\end{equation} where $\eta:\alpha\rightarrow \kappa$ and  $\alpha<\kappa$   (\cite{MR1110032,MR1242054,MR3235820,MR4409720}). This is in analogy with the classical Baire  space. In contrast, the most common topology of a product space in general topology is the so-called product  topology, a.k.a. Tychonov-topology. A natural notion of ``$\kappa$-openness", derived from  the product topology, plays a major role below.

Continuing the review of the bounded topology (\ref{old}), a set $U$ is  defined as open,\footnote{Whenever we say in this paper that a set is open or closed, we mean open or closed in the bounded topology.}   if it is the union of a family of basic open sets:
\begin{equation}\label{old1}
U=\bigcup_{i\in I}N_{\eta_i}.
\end{equation} The bounded topology of $2^\kappa$ is defined similarly. This definition raises the question, how many basic open sets are there after all, especially as in the classical Baire space $\omega^\omega$ and Cantor space $2^\omega$ there are only countably many basic open sets. Obviously, the number of basic open sets (\ref{old}) is $\kappa^{<\kappa}$. This suggests that the Descriptive Set Theory of $\kappa^\kappa$  is quite different according to whether or not $\kappa^{<\kappa}=\kappa$. 

The  spaces built on (\ref{old}) and (\ref{old1}) have given rise to a lot of research, starting with \cite{MR1110032} and \cite{MR1242054} among the early ones and e.g. 
\cite{MR3235820} and \cite{MR4409720} among the more recent ones.  In this pioneering research the assumption $\kappa^{<\kappa}=\kappa$ is made virtually without exception. If this assumption is dropped, a problem with  the topology based on the definition (\ref{old1}) arises:

\begin{lemma}\label{dead-end}
    Suppose $2^\lambda=2^\kappa$, $\kappa>\lambda$. Then for all $A\subseteq \kappa^\kappa$ there is a closed $C\subseteq \kappa^\kappa\times 2^\kappa$ such that $A$ is the first projection  of $C$ i.e. $$A=\{f\in \kappa^\kappa:\exists g\in 2^\kappa((f,g)\in C)\}.$$ 
\end{lemma}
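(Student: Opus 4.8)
The plan is to realize $A$ as the first projection of the (topological) closure of the graph of a suitable coding map, exploiting that the "codes" can be made to live at a level $\lambda$ that is strictly below $\kappa$. Concretely, since $|\kappa^\kappa| = 2^\kappa = 2^\lambda$, I would first fix an injection $f \mapsto c_f$ from $\kappa^\kappa$ into $2^\lambda$, and for each $f$ let $\hat c_f \in 2^\kappa$ be the function extending $c_f$ by zeros on $[\lambda,\kappa)$. Then set $D = \{(f,\hat c_f) : f \in A\} \subseteq \kappa^\kappa\times 2^\kappa$ and let $C$ be the closure of $D$. Being a closure, $C$ is closed, and trivially $A = \pi_1(D) \subseteq \pi_1(C)$, so the entire content of the lemma reduces to the reverse inclusion $\pi_1(C) \subseteq A$; I in fact expect $C = D$ to come out.

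The one step that requires any thought is this last inclusion, and here is how I would run it. Let $(f,g) \in C$. Since $C$ is the closure of $D$, every basic open neighbourhood of $(f,g)$ meets $D$. Reading this off at the neighbourhood determined by the restriction to coordinates below $\lambda$ produces some $f_0 \in A$ with $f_0\restriction\lambda = f\restriction\lambda$ and $\hat c_{f_0}\restriction\lambda = g\restriction\lambda$, i.e. $c_{f_0} = g\restriction\lambda$. Doing the same at each level $\beta$ with $\lambda\le\beta<\kappa$ gives $f_\beta \in A$ with $f_\beta\restriction\beta = f\restriction\beta$ and $\hat c_{f_\beta}\restriction\beta = g\restriction\beta$; restricting the latter equality to $\lambda$ yields $c_{f_\beta} = g\restriction\lambda = c_{f_0}$, hence $f_\beta = f_0$ by injectivity of $f\mapsto c_f$. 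Therefore $f_0\restriction\beta = f\restriction\beta$ for every $\beta<\kappa$, so $f = f_0 \in A$, and similarly $g\restriction\beta = \hat c_{f}\restriction\beta$ for every $\beta$, so $g = \hat c_f$. Thus $(f,g)\in D$, as desired.

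The "obstacle", such as it is, is the conceptual one of choosing the coding so that this argument goes through, and the two hypotheses of the lemma are exactly what make it work. The strict inequality $\lambda<\kappa$ is used precisely to guarantee that the code $c_f$ of a point of $D$ is fully revealed already at the \emph{bounded} level $\lambda$, which then pins down the first coordinate; this is the mechanism that prevents the closure from acquiring first coordinates outside $A$. The equality $2^\lambda = 2^\kappa$ is used, and is exactly what is needed, to supply the injection $\kappa^\kappa\hookrightarrow 2^\lambda$: taking $A = \kappa^\kappa$ forces $2^\kappa\le 2^\lambda$, while $2^\lambda\le 2^\kappa$ is automatic since $\lambda<\kappa$.

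Everything else is routine bookkeeping: that $\pi_1(D) = A$ is immediate, that $|\kappa^\kappa| = 2^\kappa$ is the standard computation $2^\kappa \le \kappa^\kappa \le (2^\kappa)^\kappa = 2^\kappa$, and that the closure of $D$ legitimately serves as "a closed set" is by definition. If one prefers an explicit witness rather than a closure, one can instead take $C$ to be the tree-closure $[T]$ of the set of initial segments of elements of $D$; the computation above shows this again equals $D$, so it makes no difference.
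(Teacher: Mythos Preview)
Your proof is correct and follows essentially the same approach as the paper: both encode elements of $\kappa^\kappa$ by sequences in $2^\lambda$ (using $2^\lambda = 2^\kappa$) and exploit $\lambda < \kappa$ so that the full code is visible already in a single basic open neighbourhood, which pins down the first coordinate. The only cosmetic difference is that the paper defines $C$ directly as $\{(\eta,\xi): \eta = \pi^{-1}(\xi\restriction\lambda)\in A\}$ for a bijection $\pi:\kappa^\kappa\to 2^\lambda$ (allowing arbitrary extensions above $\lambda$) and verifies closedness by exhibiting open neighbourhoods of points in the complement, whereas you take the closure of the graph $\{(f,\hat c_f):f\in A\}$ and argue it does not grow.
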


\begin{cproof}
Let $\pi:\kappa^\kappa\to 2^{\lambda}$ be a bijection and
$$C=\{(\eta,\xi)\in \kappa^\kappa\times 2^\kappa: \eta=\pi^{-1}(\xi\restriction\lambda)\in A\}.$$Clearly, the first projection of $C$ is $A$. To prove that $C$ is closed, suppose $(\eta,\xi)\notin C$. If it happens that $\pi^{-1}(\xi\restriction\lambda)\notin A$, then we let
$$U=\{(\eta',\xi'):\xi'\restriction\lambda=\xi\restriction\lambda, \eta'\in \kappa^\kappa\}.$$ The set $U$ is open, $(\eta,\xi)\in U$ and $C\cap U=\emptyset$. On the other hand, if it happens that  $\pi^{-1}(\xi\restriction\lambda)\in A$, then there is $\alpha<\kappa$ such that $\eta(\alpha)\ne\pi^{-1}(\xi\restriction\lambda)(\alpha)$ and we let
$$U=\{(\eta',\xi'):\xi'\restriction\lambda=\xi\restriction\lambda, \eta'(\alpha)=\eta(\alpha)\}.$$ The set $U$ is again open, $(\eta,\xi)\in U$ and $C\cap U=\emptyset$. 

\end{cproof}

This lemma shows that we cannot hope for any meaningful theory of analytic sets in the case that $2^\omega=2^\kappa$, $\kappa>\omega$. Still e.g. orbits of  structures of size $\kappa$ are canonical examples of analytic sets in these spaces. 
%
A natural way out of the \emph{cul-de-sac} of Lemma~\ref{dead-end} is to define a set to be ``$\kappa$-open" if (\ref{old1}) is satisfied with $|I|\le\kappa$. Indeed, this kills the proof of Lemma~\ref{dead-end} in the sense that the set $C$ therein is not ``$\kappa$-closed", i.e. the complement of a ``$\kappa$-open" set. However, it turns out that this concept of ``$\kappa$-open" is too restrictive, as the following example shows:

\begin{example}\label{example11}
Suppose $2^\lambda>\kappa>\lambda$. The set $\{f\in\kappa^\kappa:f(\lambda)=f(\lambda+1)\}$ is clopen but not a union of $\le\kappa$ sets of the form (\ref{old}).
\end{example}

This example shows that if we define  a set $A$ to be ``$\kappa$-open" if (\ref{old1}) is satisfies with $|I|\le\kappa$, the extremely simple and ``basic" set of  Example~\ref{example11}, need not be ``$\kappa$-open". Still such sets are the basis of definability theory in infinitary model theory. 

The following is further example of the difficulties one runs into when developing a theory of Borel sets on the basis of bounded topology:

\begin{example}\label{example1}
We say that $X\subseteq \kappa^\kappa$ is $\kappa$-Borel\hspace{2pt}$^+$ if it is in the closure of basic open sets (\ref{old}) under complements and unions of size at most $\kappa$. If $2^\omega>\kappa>\omega$, then there are some very simple clopen sets, e.g. $\{\eta\in\kappa^\kappa\mid 
\eta(\omega)>0\}$, that are not $\kappa$-Borel\hspace{2pt}$^+$\!. 
To see this, let $\circledast(Y,\eta)$ be the condition $$``N_{\eta^\frown 0}\cup N_{\eta^\frown 1}\subseteq Y\text{ or }N_{\eta^\frown 0}\cup N_{\eta^\frown 1}\subseteq \kappa^\kappa\backslash Y",$$ where $Y\subseteq\kappa^\kappa$ and $\eta\in \kappa^\omega$.
We say that $Y\subseteq \kappa^\kappa$ is \emph{bad} if for almost all (i.e. for all but at most $\kappa$ many) $\eta\in\kappa^\omega$, $\circledast(Y,\eta)$ holds.
We show that all $\kappa$-Borel\hspace{2pt}$^+$ sets are bad.
    Let us start by showing that basic open sets $N_\eta$, where $\eta:\alpha\rightarrow \kappa$, are bad sets. Let $\alpha<\kappa$ and $\eta:\alpha\rightarrow \kappa$. 
  
         {\bf Case $\alpha\leq\omega$.} Clearly, for all $\xi:\omega\rightarrow\kappa$, $N_{\xi^\frown 0}\subseteq N_\eta$ if and only if $N_{\xi^\frown 1}\subseteq N_\eta$. Also $N_{\xi^\frown 0}\subseteq \kappa^\kappa\backslash N_\eta$ if and only if $N_{\xi^\frown 1}\subseteq \kappa^\kappa\backslash N_\eta$. Thus $N_\eta$ is bad.
         
        {\bf Case $\omega<\alpha$.} Let $\xi:\omega\rightarrow\kappa$.
        \begin{itemize}
            \item {\bf Subcase $\xi^\frown 0,\xi^\frown 1\neq\eta\restriction\omega+1$.} Clearly $N_{\xi^\frown 0}\cup N_{\xi^\frown 1}\subseteq \kappa^\kappa\backslash N_\eta$.
            \item {\bf Subcase $\xi^\frown 0=\eta\restriction\omega+1$.} Clearly $N_\eta\subseteq N_{\xi^\frown 0}$ and $N_{\xi^\frown 1}\subseteq \kappa^\kappa\backslash N_\eta$. Thus $\circledast(N_\eta,\xi)$ does not hold.
            \item {\bf Subcase $\xi^\frown 1=\eta\restriction\omega+1$.} Similar as above, we conclude that $\circledast(N_\eta,\xi)$ does not hold.
        \end{itemize}
        Notice that only in the last two cases $N_{\xi^\frown 0}\cup N_{\xi^\frown 1}\not\subseteq N_\eta$ and $N_{\xi^\frown 0}\cup N_{\xi^\frown 1}\not\subseteq \kappa^\kappa\backslash N_\eta$. Therefore for almost all $\xi$, $N_{\xi^\frown 0}\cup N_{\xi^\frown 1}\subseteq N_\eta$ or $N_{\xi^\frown 0}\cup N_{\xi^\frown 1}\subseteq \kappa^\kappa\backslash N_\eta$, and $N_\eta$ is bad.
    
    It is clear that if $A$ is bad, then $\kappa\backslash A$ is bad. Let $\{A_i\}_{i<k}$ be bad sets, and $A=\cup_{i<\kappa}A_i$. Let $\eta\in \kappa^\omega$ such that there is $i<\kappa$ such that $\circledast(A_i,\eta)$ holds.
    \begin{itemize}
        \item If $N_{\eta^\frown 0}\cup N_{\eta^\frown 1}\subseteq A_i$, then 
        $\circledast(A,\eta)$.
        \item If for all $i<\kappa$, $N_{\eta^\frown 0}\cup N_{\eta^\frown 1}\not\subseteq A_i$, then $N_{\eta^\frown 0}\cup N_{\eta^\frown 1}\subseteq \cap_{i<\kappa}(\kappa\backslash A_i)=\kappa\backslash A$. Thus $\circledast(A,\eta)$.
    \end{itemize}
    
Finally, let $X=\{\eta\in\kappa^\kappa\mid 
\eta(\omega)>0\}.$ 
Clearly for all $\xi\in 2^\omega$, $N_{\xi^\frown 0}\cup N_{\xi^\frown 1}\not\subseteq X$ and $N_{\xi^\frown 0}\cup N_{\xi^\frown 1}\not\subseteq \kappa^\kappa\backslash X$. Since $2^\omega>\kappa$, $X$ is not bad, hence not $\kappa$-Borel\hspace{2pt}$^+$.
\end{example}

In order to avoid the pitfalls of Examples~\ref{example11} and \ref{example1} we redefine the topology of $\kappa^\kappa$  in a way which is reminiscent of the Tychonov- or product topology on $\kappa^\kappa$, when $\kappa$ is given the discrete topology. The new topology-like concept seems more natural than the bounded topology from the point of view of infinitary model theory, especially when  $\kappa^{<\kappa}>\kappa$. It coincides with the bounded topology in the case that 
$\kappa^{<\kappa}=\kappa$ (which was the context of e.g. \cite{MR3235820} and \cite{MR1242054}).

To make a judgement whether some concept is natural or not it is reasonable to use relevant applications as a criterion. Of course there is also an aesthetic aspect. As to applications, we look at infinitary model theory for inspiration and this leads us to make the below definition. However, the definition is aesthetically appealing as well, by virtue of its simplicity.

We now introduce the most important topology-like concept of this paper.

\begin{defn}\label{kappaopen}
    \emph{Basic $\kappa$-open} sets are the sets of the form $$N_\eta=\{\zeta\in \kappa^\kappa\mid\eta\subseteq \zeta\}$$ where $\eta:X\rightarrow \kappa$  and $X\subseteq \kappa$ has size less than $\kappa$; and the empty set. A set is $(\kappa,\lambda)$-open if it is a $\lambda$-union of (i.e. a union of $\lambda$)  basic $\kappa$-\emph{open} sets. A set is $(\kappa,\lambda)$-\emph{closed} if its complement is $(\kappa,\lambda)$-open. The class of $(\kappa,\lambda)$-\emph{Borel}  sets is the smallest class that contains all the basic $\kappa$-open sets and is closed under complements and $\lambda$-unions and $\lambda$-intersections.
Sets that are $(\kappa,\kappa)$-open (-closed, -Borel) are called simply $\kappa$-open (-closed, -Borel) sets.
\end{defn}

The family of all $\kappa$-open sets is not a topology if $\kappa<\kappa^{<\kappa}$ as it is then not closed under arbitrary unions. This seems like a setback. However, it is quite close to the product topology: To witt, let us denote by $\tau_p$ the \textit{product topology} (a.k.a. \textit{Tychonoff topology}) of $\kappa^\kappa$, i.e. basic open sets are of the form $N_\eta$, where $\eta:X\rightarrow\kappa$, $X\subseteq \kappa$ and $|X|<\omega$. 
The close connection between $\kappa$-openness and $\tau_p$ is manifested by the equation $$N_\eta=\bigcap\{N_\zeta:\zeta\subseteq\eta, |\zeta|<\omega\}.$$ 
In particular, our family of $\kappa$-Borel sets is the same as the family of Borel subsets of $\tau_p$ (see Fact~\ref{2.7}). 

\begin{fact}Suppose $\kappa$ is regular.
    Let us denote by $B$ the class of Borel sets of the bounded topology, i.e. smallest class that contains all the open sets in the bounded topology and is closed under complements and $\kappa$-unions and $\kappa$-intersections. Every $\kappa$-Borel set is an element of $B$.
\end{fact}

\begin{cproof}
    Before we start, let us fix some notation. Given $\xi\in \kappa^{<\kappa}$, let us denote by $N_\xi$ the basic $\kappa$-open set defined by $\xi$ and $\hat N_\xi$ the basic open set defined by $\xi$ (i.e. the basic open set in the bounded topology). Notice that set theoretically, $\hat N_\xi$ and $N_\xi$ are the same object.

    By the definition of $\kappa$-Borel, it is enough to show that any basic $\kappa$-open set is an element of $B$. Let $X\subseteq \kappa$ be of size less than $\kappa$ and $\eta:X\rightarrow\kappa$. 
    For all $\alpha\in X$ let us define the function $\eta_\alpha:\{\alpha\}\rightarrow\{\eta(\alpha)\}$. Let us denote by $N_\alpha$ the basic open set defined by $\eta_\alpha$ and define $$A_\alpha=\{\zeta\in \kappa^{<\kappa}\mid \eta_\alpha\subseteq \zeta, \dom (\zeta)=\alpha+1\}.$$
    Clearly $$N_\alpha=\bigcup_{\zeta\in A_\alpha}\hat N_\zeta.$$    
    Notice that the right side is a union of open sets in the bounded topology, thus $N_\alpha$ is an open set (in the bounded topology), independent of the size of $A_\alpha$, and therefore an element of $B$.
    Finally $N_\eta=\bigcap_{\alpha\in X}N_\alpha$ and since $X$ has size less than $\kappa$, $N_\eta\in B$.
\end{cproof}

Note that the total number of basic $\kappa$-open sets is $\kappa^{<\kappa}$, so  $(\kappa,\kappa^{<\kappa})$-open  is equivalent to open  in the usual sense of  Generalized Baire Spaces, as is $\kappa$-open if 
$\kappa^{<\kappa}=\kappa$. On the other hand, if $\kappa^{<\kappa}>\kappa$, then we get a difference:

\begin{prop}
    Suppose $2^\omega>\kappa$. Then there is an open $\kappa$-Borel set which is  not $\kappa$-open.  
\end{prop}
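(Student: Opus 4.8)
The plan is to build an explicit counterexample $B\subseteq\kappa^\kappa$ in the spirit of Example~\ref{example1}, replacing the single coordinate equation there by a countable conjunction of such equations, so that the resulting ``defect'' can no longer be absorbed into a union of only $\kappa$ basic $\kappa$-open sets. Throughout we use that $\kappa$ is uncountable. Put
\[
B=\{f\in\kappa^\kappa: f(2n)=f(2n+1)\text{ for all }n<\omega\}.
\]
The same argument will work verbatim in $2^\kappa$.

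First I would verify that $B$ is $\kappa$-Borel and open. For each $n<\omega$ the set $A_n=\{f:f(2n)=f(2n+1)\}$ is $\kappa$-clopen: it is the union of the $\kappa$ basic $\kappa$-open sets $N_\eta$ with $\dom(\eta)=\{2n,2n+1\}$ and $\eta(2n)=\eta(2n+1)$, while its complement is the union of the (at most $\kappa$) basic $\kappa$-open sets $N_\eta$ with $\dom(\eta)=\{2n,2n+1\}$ and $\eta(2n)\ne\eta(2n+1)$. Hence $B=\bigcap_{n<\omega}A_n$ is $\kappa$-Borel. For openness, note that for $f\in B$ the set $N_{f\restriction\omega}$ is a basic $\kappa$-open set (its domain $\omega$ has size less than $\kappa$ because $\kappa$ is uncountable), it contains $f$, and every $g\supseteq f\restriction\omega$ satisfies $g(2n)=f(2n)=f(2n+1)=g(2n+1)$ for all $n$; thus $N_{f\restriction\omega}\subseteq B$ and $B=\bigcup_{f\in B}N_{f\restriction\omega}$ is open.

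The crux, and the step I expect to be the main obstacle, is showing $B$ is not $\kappa$-open. The key observation is that $N_\eta\subseteq B$ forces $\omega\subseteq\dom(\eta)$: if some $m<\omega$ lay outside $\dom(\eta)$, then, writing $m\in\{2n,2n+1\}$ and using that at least one of $2n,2n+1$ is free while the range has at least two elements, one could extend $\eta$ to a function $g$ with $g(2n)\ne g(2n+1)$, so $g\in N_\eta\setminus B$, a contradiction. Now assume toward a contradiction that $B=\bigcup_{i\in I}N_{\eta_i}$ with $|I|\le\kappa$. For each $s\in\kappa^\omega$ let $f_s\in\kappa^\kappa$ be given by $f_s(2n)=f_s(2n+1)=s(n)$ for $n<\omega$ and $f_s(\alpha)=0$ for $\alpha\ge\omega$; then $f_s\in B$, so $f_s\in N_{\eta_{i(s)}}$ for some $i(s)\in I$ with $N_{\eta_{i(s)}}\subseteq B$. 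By the key observation $\omega\subseteq\dom(\eta_{i(s)})$, and $\eta_{i(s)}\subseteq f_s$ gives $\eta_{i(s)}\restriction\omega=f_s\restriction\omega$, which in turn determines $s$. Hence $s\mapsto i(s)$ is injective, so $|I|\ge|\kappa^\omega|\ge 2^\omega>\kappa$, contradicting $|I|\le\kappa$. (In $2^\kappa$ one runs the same argument with $s$ ranging over $2^\omega$, and the count $|2^\omega|>\kappa$ is exactly the hypothesis.) Therefore $B$ is open and $\kappa$-Borel but not $\kappa$-open, as required.
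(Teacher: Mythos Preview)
Your proof is correct and follows essentially the same strategy as the paper's: exhibit a set that is a countable intersection of $\kappa$-open sets (hence $\kappa$-Borel) and a union of basic sets with domain $\omega$ (hence open), then show that any basic $\kappa$-open subset must have $\omega$ contained in its domain, so a counting argument using $\kappa^\omega\ge 2^\omega>\kappa$ rules out a $\kappa$-sized cover. The only difference is the concrete witness---the paper uses the set of functions strictly increasing on $\omega$, while you use $\{f:f(2n)=f(2n+1)\text{ for all }n\}$; your choice has the minor bonus of transferring verbatim to $2^\kappa$.
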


\begin{cproof}
    Let $X$ be the set of all strictly increasing functions $f:\omega\rightarrow\kappa$ and $U=\bigcup_{f\in X}N_f$. It is clear that $U$ is open. Since obviously $U=\bigcap_{n<\omega}(\bigcup_{f\in X}N_{f\restriction n})$, $U$ is $\kappa$-Borel. Let us show that $U$ is not $\kappa$-open. Assume, towards contradiction, that there is $Z\subseteq \kappa^\kappa$ such that $|Z|\leq \kappa$ and $U=\bigcup_{\eta\in Z}N_\eta$. Notice that for all $\eta\in Z$, $\omega\subseteq dom(\eta)$, otherwise there is $\xi\in N_\eta$ such that $\xi\restriction \omega$ is not strictly increasing. It is easy to see that $|X|=\kappa^\omega>\kappa$, so there is $f\in X$ such that $f\not \subseteq \eta$, for all $\eta\in Z$. Thus $N_f\not\subseteq \bigcup_{\eta\in Z}N_\eta=U$, a contradiction.
\end{cproof}

Notice that the space $2^\kappa$ itself is a $\kappa$-Borel subset of $\kappa^\kappa$. This observation will be useful later.

Let us denote by Borel\hspace{1pt}$_p$ the smallest class of sets that contains all the open sets of the product topology $\tau_p$ and is closed under complement, $\kappa$-unions, and $\kappa$-intersections. This is the analogue of classical Borel sets in the framework of generalized Baires spaces with product topology. We refer to the elements of Borel\hspace{1pt}$_p$ simply as Borel\hspace{1pt}$_p$ sets. 

\begin{fact}\label{2.7}
    A set $A\subseteq \kappa^\kappa$ is a Borel\hspace{1pt}$_p$ set if and only if it is  $\kappa$-Borel.
\end{fact}
\begin{cproof}
    $\Rightarrow$). From the definition of $\kappa$-Borel, it is clear that the basic open sets of $\tau_p$ are basic $\kappa$-open sets of $\tau_p$. Let $A$ be an open set of $\tau_p$, so $\bigcup_{i\in I}A_i$ where $A_i$ are basic open sets. Since there are at most $\kappa$ basic open sets in $\tau_p$, $A$ is a $\kappa$-open set. The proof follows from the definitions of Borel\hspace{1pt}$_p$ and $\kappa$-Borel.

    $\Leftarrow$). Let $N_\eta$ be a basic $\kappa$-open set, $\eta:X\rightarrow \kappa$ where $|X|<\kappa$. For all $\alpha\in X$, let us define $N_\alpha$ to be the basic $\kappa$-open set given by the function $f:\{\alpha\}\rightarrow\{\eta(\alpha)\}$. Thus $N_\eta=\bigcap_{\alpha\in X}N_\alpha$ and it is Borel\hspace{1pt}$_p$. The proof follows from the definitions of Borel\hspace{1pt}$_p$ and $\kappa$-Borel.
\end{cproof}

We define basic $\kappa$-open sets in the cartesian product $\kappa^\kappa\times\kappa^\kappa$ in a similar way. Thus, the basic $\kappa$-open sets of the product are of the form $N_\eta\times N_\xi$, where $N_\eta$ and $N_\xi$ are basic $\kappa$-open sets of $\kappa^\kappa$. A set is $(\kappa,\lambda)$-open in $\kappa^\kappa\times\kappa^\kappa$ if it is a $\lambda$-union of basic $\kappa$-open sets. A set is $(\kappa,\lambda)$-closed in $\kappa^\kappa\times\kappa^\kappa$ if its complement is $(\kappa,\lambda)$-open. The class of $(\kappa,\lambda)$-Borel subsets of $\kappa^\kappa\times\kappa^\kappa$ is the smallest class that contains all the basic open sets and is closed under complements, $\lambda$-unions, and $\lambda$-intersections. Similarly for the space $2^\kappa$.

\section{$\kappa$-\borelstar\ and $\kappa$-$\Sigma^1_1$}

The concept of a \borelstar-set was introduced in \cite{MR1242054} as a generalization of the concept of a Borel-set. We will redefine \borelstar\ sets here anew as well, as the definition in \cite{MR1242054} calls for a slight modification when we move away from the  bounded topology. There is no change if $\kappa^{<\kappa}=\kappa$. In classical Polish spaces every Borel set has a Borel-code which is a countable tree without infinite branches. \borelstar-sets  arise when the Borel-code is allowed to have infinite branches. We first define the appropriate trees and then define what it means for such a tree to be a \borelstar-code.

\begin{defn}\label{labelled_tree_star}
A pair $(T,L)$ is a \emph{good labelled $(\kappa,\lambda)$-tree} if 
\begin{itemize}
\item [(i)] $T=(T,\le)$ is a tree without $\kappa$-branches.
\item [(ii)] Every element of $T$ has $\le\kappa$ many immediate successors.
\item [(iii)] $|T|\le\lambda$.
\item [(iv)] Every increasing sequence in $T$ has a supremum.
\item [(v)] If $t\in T$ is not a leaf (i.e. maximal element), then $L(t)\in \{\bigcup,\bigcap\}$.
\item [(vi)]  If $t\in T$ is  a leaf, then $L(t)$ is a basic $\kappa$-open set.
\end{itemize}
\end{defn}

Sometimes we use $(\kappa,\lambda)$-Borel sets as labels. It is clear how this fits in the general definition.

\begin{defn}\label{Borel_star}
A good labelled $(\kappa,\lambda)$-tree is a \emph{$(\kappa,\lambda)$-\borelstar-code} if 
\begin{itemize}
\item [(i)] There is a function $\pi$ such that $\dom(\pi)$ is a $\kappa$-closed subset of $\kappa^\kappa$ and for all $\eta\in \dom(\pi)$, $\pi(\eta)$ is a strategy of II in the usual (see \cite{MR1242054}, \cite{MR3753133}) \borelstar-game $GB(\xi,(T,L))$, where $\xi\in\kappa^\kappa$.
\item [(ii)] For all $\xi\in\kappa^\kappa$, if II has a winning strategy in $GB(\xi,(T,L))$, denoted $II\uparrow GB(\xi,(T,L))$, then there is some $\eta\in\dom(\pi)$ such that $\pi(\eta)$ is a winning strategy of II in $GB(\xi,(T,L))$.
\item [(iii)] The set $$\begin{array}{l}
\{(\xi,\eta) : \eta\in\dom(\pi)\text{ and }\pi(\eta)\mbox{ is a winning strategy}\\ \qquad\qquad\text{of II in }GB(\xi,(T,L))\}\end{array}$$ is $\kappa$-Borel.
\end{itemize} A $(\kappa,\kappa)$-\borelstar-code is called a \emph{$\kappa$-\borelstar-code}.
\end{defn}

Now that we have defined $(\kappa,\lambda)$-\borelstar-codes, we can define $(\kappa,\lambda)$-\borelstar-sets as such sets that have a $(\kappa,\lambda)$-\borelstar-code:

\begin{defn}\label{def_Borel_star}
A set $X\subseteq \kappa^\kappa$ is \emph{$(\kappa,\lambda)$-\borelstar} if there is a $(\kappa,\lambda)$-\borelstar-code $(T,L)$  such that for all $\xi\in\kappa^\kappa$: 
$$\xi\in X\leftrightarrow II\uparrow GB(\xi,(T,L)).$$ A set is \emph{$\kappa$-\borelstar} if it is $(\kappa,\kappa)$-\borelstar.
\end{defn}

\begin{lemma}\label{labelled_to_Borel}
Suppose $\lambda\leq\kappa$. 
If $(T,L)$ is a good labelled $(\kappa,\lambda)$-tree, then it is a $(\kappa,\lambda)$-{\borelstar}-code.
\end{lemma}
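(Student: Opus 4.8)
The plan is to show that if $(T,L)$ is a good labelled $(\kappa,\lambda)$-tree with $\lambda\le\kappa$, then we can choose a family of strategies for player II rich enough to satisfy clauses (i)--(iii) of Definition~\ref{Borel_star}. The key observation is that, since $\lambda \le \kappa$, the tree $T$ has size $\le\kappa$ and each node has $\le\kappa$ immediate successors, so the set of \emph{all} strategies of II in the game $GB(\xi,(T,L))$ can itself be coded by elements of $\kappa^\kappa$. Concretely, a play of $GB(\xi,(T,L))$ has length $<\kappa$ (because $T$ has no $\kappa$-branches and every increasing sequence has a supremum), so a strategy for II is a function from the $\le\kappa$-sized set of partial plays into $T$ (plus, at leaves, a choice witnessing membership in the basic $\kappa$-open label), and such a function can be encoded as a single element $\sigma\in\kappa^\kappa$ via a fixed bijective coding of the relevant index set with $\kappa$. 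Crucially this coding does \emph{not} depend on $\xi$: the game tree of $GB(\xi,(T,L))$ is the same for all $\xi$, only the winning condition depends on $\xi$.

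First I would fix such a coding and let $\dom(\pi)=\{\sigma\in\kappa^\kappa : \sigma \text{ codes a strategy of II}\}$, with $\pi(\sigma)$ the strategy it codes. I would check that $\dom(\pi)$ is $\kappa$-closed: being (a code for) a strategy is a local condition — it says that for every partial play (of which there are $\le\kappa$ many, each of length $<\kappa$) the prescribed II-move is a legal successor in $T$, and this is the complement of a $\kappa$-union of basic $\kappa$-open sets (for each partial play and each illegal value, a basic $\kappa$-open set where $\sigma$ takes that illegal value). Then clause~(ii) is immediate: if II has \emph{any} winning strategy, that strategy is coded by some $\sigma\in\dom(\pi)$, so $\pi(\sigma)$ is a winning strategy.

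The main work is clause~(iii): the set $W=\{(\xi,\sigma):\sigma\in\dom(\pi)\text{ and }\pi(\sigma)\text{ is winning in }GB(\xi,(T,L))\}$ must be $\kappa$-Borel. Here I would argue that $\pi(\sigma)$ is winning iff \emph{every} play of $GB(\xi,(T,L))$ in which II follows $\pi(\sigma)$ is won by II, i.e. reaches a leaf $t$ with $\xi\in L(t)$. Since there are at most $\kappa$-many plays consistent with a fixed strategy (the branching is bounded by $\kappa$ at I's moves and plays have length $<\kappa$; one must check the total count stays $\le\kappa$, using $\lambda\le\kappa$ and that $T$ has $\le\kappa$ nodes so the relevant plays are indexed by something of size $\le\kappa$), the statement "$\pi(\sigma)$ is winning at $\xi$" is a $\kappa$-intersection, over these plays $p$, of the conditions "$p$ (which depends on $\sigma$ and $\xi$ in a way controlled by $<\kappa$-much information) ends at a leaf $t$ with $\xi\in L(t)$". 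Each such condition is $\kappa$-Borel in $(\xi,\sigma)$: it is a $\kappa$-union over leaves $t$ of the intersection of "$\sigma$ and $\xi$ determine the play $p$ ending at $t$" (a $\kappa$-Borel, indeed $\kappa$-closed, condition localized to $<\kappa$ coordinates) with $L(t)$ (a basic $\kappa$-open set, hence $\kappa$-Borel). Intersecting with the $\kappa$-closed set $\dom(\pi)\times\kappa^\kappa$ keeps us $\kappa$-Borel, and since $\kappa$-Borel sets are closed under $\kappa$-unions and $\kappa$-intersections, $W$ is $\kappa$-Borel.

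The step I expect to be the main obstacle is the careful bookkeeping in clause~(iii): verifying that the set of plays consistent with a given strategy has size $\le\kappa$ (so that the relevant intersection is a legitimate $\kappa$-intersection and not a larger one), and that the dependence of "the play $p$ is consistent with $\sigma$ at $\xi$" on the pair $(\xi,\sigma)$ factors through $<\kappa$-many coordinates so that it is genuinely $\kappa$-Borel. Both rest on clauses (i)--(iv) of Definition~\ref{labelled_tree_star} (no $\kappa$-branches, $\le\kappa$ branching, $|T|\le\lambda\le\kappa$, suprema of increasing sequences exist), which together force every play to have length $<\kappa$ and the tree of plays to have size $\le\kappa$. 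Once this combinatorial bound is pinned down, the Borel computations are routine.
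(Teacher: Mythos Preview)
Your approach is correct and essentially the same as the paper's: both code all II-strategies by elements of $\kappa^\kappa$ via the choice at each $\cup$-node (your ``partial plays'' amount to nodes of $T$, since the game tree is $T$ itself), verify that $\dom(\pi)$ is $\kappa$-closed, and express the winning relation as a $\kappa$-Borel combination indexed by the $\le|T|\le\lambda\le\kappa$ leaves. Your main worry about counting plays dissolves once you note that each complete play ends at a distinct leaf of $T$, and the paper's computation for clause~(iii) is exactly your ``factors through $<\kappa$ coordinates'' observation made explicit: a play following $\pi(\sigma)$ reaches a given leaf $t$ iff $\sigma\in N_{\xi_t}$ for a fixed $\xi_t$ with $|\dom(\xi_t)|<\kappa$, so the non-winning set is $\bigcup_{t}(\kappa^\kappa\setminus N_{\eta_t})\times(\dom(\pi)\cap N_{\xi_t})$.
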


\begin{cproof}

Let $X=\{t\in T\mid L(t)=\cup\}$ and $F:X\rightarrow\kappa$ be an injective function. For all $t\in X$, let $G_t$ be an injective function from the immediate successors of $t$ to $\kappa$. Let $\dom(\pi)$ be the set of all $\eta\in \kappa^\kappa$ such that for all $t\in X$, $\eta(F(t))\in \rng(G_t)$. It is clear that $\dom(\pi)$ is $\kappa$-closed. Let $\pi(\eta)$ be the strategy that at $t\in X$, it chooses $G^{-1}_t(\eta(F(t)))$. Clearly (i) and (ii) are satisfied.

Let $Y=\{t\in T\mid t \text{ is a leaf}\}$, since $\lambda\leq\kappa$ implies $|T|\leq\kappa$, $Y$ has power at most $\kappa$. For all $t\in Y$, let $\eta_t$ be such that $L(t)=N_{\eta_t}$. For all $t\in Y$, there are $Y_t\subseteq \kappa$, $|Y_t|<\kappa$, and $\xi_t:Y_t\rightarrow\kappa$ such that for all $\vartheta\in \dom(\pi)$, $\xi_t\subseteq \vartheta$ if and only if for all $\zeta\in \kappa^\kappa$, there is a play in $GB(\zeta,(T,L))$ in which II has used $\pi(\vartheta)$ and it ends at $t$.

We conclude that for all $(\zeta,\vartheta)\in (\kappa^\kappa\backslash N_{\eta_t})\times (\dom(\pi)\cap N_{\xi_t})$, $\pi(\vartheta)$ is not a winning strategy of II in $GB(\zeta,(T,L))$. Thus $$H=\bigcup_{t\in Y}(\kappa^\kappa\backslash N_{\eta_t})\times (\dom(\pi)\cap N_{\xi_t})$$ is $\kappa$-Borel. Notice that $$\{(\xi,\eta) : \eta\in\dom(\pi)\text{ and }\pi(\eta)\mbox{ is a winning strategy of II in }GB(\xi,(T,L))\}$$ is equal to $(\kappa^\kappa\times\kappa^\kappa\backslash\ H)\cap \left(\kappa^\kappa\times \dom(\pi)\right)$, the intersection of two $\kappa$-Borel sets.
Therefore $$\begin{array}{l}
\{(\xi,\eta) : \eta\in\dom(\pi)\text{ and }\pi(\eta)\mbox{ is a winning strategy}\\ \qquad\qquad\text{of II in }GB(\xi,(T,L))\}\end{array}$$ is $\kappa$-Borel and (iii) holds.

\end{cproof}

It follows that if $\kappa^{<\kappa}=\kappa$ and $\lambda\ge\kappa$, then $(\kappa,\lambda)$-\borelstar\ implies $\kappa$-\borelstar, and $\kappa$-\borelstar\ is the same as \borelstar\ as defined in \cite{MR3235820} and \cite{MR1242054}.  

\begin{cor}\label{Borel*_proj_closed}
    Suppose $\lambda\leq\kappa$ and $X\subseteq \kappa^\kappa$ is a \emph{$(\kappa,\lambda)$-\borelstar} set. Then $X$ is the projection of a $\kappa$-closed set. 
\end{cor}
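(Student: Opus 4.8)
The plan is to realise $X$ as the projection of a $\kappa$-closed set of pairs $(\xi,\eta)$ in which $\eta$ codes a winning strategy of player II in the relevant $\borelstar$-game. Fix a $(\kappa,\lambda)$-$\borelstar$-code $(T,L)$ witnessing that $X$ is $(\kappa,\lambda)$-$\borelstar$, so that $\xi\in X$ iff $II\uparrow GB(\xi,(T,L))$. This membership condition refers only to the labelled tree $(T,L)$, not to the auxiliary function attached to the code, and since $\lambda\le\kappa$ I would replace that function by the canonical one $\pi_0$ constructed in the proof of Lemma~\ref{labelled_to_Borel}: there $\dom(\pi_0)$ is a $\kappa$-closed subset of $\kappa^\kappa$, $\pi_0(\eta)$ is the strategy of II that at a node $s$ with $L(s)=\cup$ plays $G_s^{-1}(\eta(F(s)))$, and by clause (ii) of Definition~\ref{Borel_star} every winning strategy of II in $GB(\xi,(T,L))$ equals $\pi_0(\eta)$ for some $\eta\in\dom(\pi_0)$. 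Hence $\xi\in X$ iff there is $\eta\in\dom(\pi_0)$ with $\pi_0(\eta)$ winning for II in $GB(\xi,(T,L))$, i.e.\ $X=\{\xi:\exists\eta\,(\xi,\eta)\in C\}$ where
$$C=\{(\xi,\eta)\in\kappa^\kappa\times\kappa^\kappa:\eta\in\dom(\pi_0)\text{ and }\pi_0(\eta)\text{ is winning for II in }GB(\xi,(T,L))\}.$$

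It remains to check that $C$ is $\kappa$-closed; this is where $\lambda\le\kappa$ is used, since it gives $|T|\le\kappa$ and in particular bounds the number of leaves of $T$ by $\kappa$. For each leaf $t$ of $T$ write $L(t)=N_{\eta_t}$, and, exactly as in the proof of Lemma~\ref{labelled_to_Borel}, let $\xi_t$ be the partial function with $|\dom(\xi_t)|<\kappa$ such that, for $\eta\in\dom(\pi_0)$, one has $\eta\in N_{\xi_t}$ iff $t$ is a possible final position of a play of $GB(\cdot,(T,L))$ in which II follows $\pi_0(\eta)$; such a $\xi_t$ exists because reaching $t$ amounts to prescribing the value $\eta(F(s))$ at each of the $<\kappa$ many nodes $s<_T t$ with $L(s)=\cup$. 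Because every play of the game terminates at a leaf ($T$ has no $\kappa$-branches and, by clause (iv) of Definition~\ref{labelled_tree_star}, increasing sequences have suprema), for $\eta\in\dom(\pi_0)$ the strategy $\pi_0(\eta)$ is winning in $GB(\xi,(T,L))$ iff for every leaf $t$ of $T$, $\eta\in N_{\xi_t}$ implies $\xi\in N_{\eta_t}$. Hence
$$C=\big(\kappa^\kappa\times\dom(\pi_0)\big)\cap\bigcap_{t\text{ a leaf of }T}\Big(\big(\kappa^\kappa\times\kappa^\kappa\big)\setminus\big((\kappa^\kappa\setminus N_{\eta_t})\times N_{\xi_t}\big)\Big).$$
Now $\kappa^\kappa\setminus N_{\eta_t}$ is the union of the $\le\kappa$ basic $\kappa$-open sets $N_{\langle(i,c)\rangle}$ with $i\in\dom(\eta_t)$ and $c\ne\eta_t(i)$, hence $\kappa$-open, so $(\kappa^\kappa\setminus N_{\eta_t})\times N_{\xi_t}$ is a union of $\le\kappa$ basic $\kappa$-open sets of the product space, i.e.\ $\kappa$-open, and each of the $\le\kappa$ sets appearing in the large intersection is therefore $\kappa$-closed; together with the $\kappa$-closed set $\kappa^\kappa\times\dom(\pi_0)$, this presents $C$ as an intersection of $\le\kappa$ many $\kappa$-closed sets, so $C$ is $\kappa$-closed, finishing the proof.

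The step I expect to need the most care --- though it is bookkeeping more than a genuine difficulty --- is the passage to $\pi_0$ together with the observation that reachability of a leaf $t$ by plays in which II follows $\pi_0(\eta)$ is controlled by a single basic $\kappa$-open condition $N_{\xi_t}$ on $\eta$; this is exactly where the absence of $\kappa$-branches enters, via $|\{s<_T t:L(s)=\cup\}|<\kappa$. Had one instead retained the code's original auxiliary function, clause (iii) of Definition~\ref{Borel_star} would only yield that $C$ is $\kappa$-Borel, and the proof would additionally need the fact that every $\kappa$-Borel set is the projection of a $\kappa$-closed set; passing to $\pi_0$ sidesteps this.
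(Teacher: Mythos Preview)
Your proof is correct and follows essentially the same approach as the paper: both replace the code's auxiliary function by the specific $\pi$ built in Lemma~\ref{labelled_to_Borel}, and then show that the set of pairs $(\xi,\eta)$ with $\eta\in\dom(\pi)$ and $\pi(\eta)$ winning is $\kappa$-closed by writing it as $(\kappa^\kappa\times\dom(\pi))$ intersected with the complement of the $\kappa$-open union $\bigcup_{t\text{ leaf}}(\kappa^\kappa\setminus N_{\eta_t})\times N_{\xi_t}$. Your account is in fact somewhat more explicit than the paper's about why one may pass to this particular $\pi$ and why each leaf contributes a single basic $\kappa$-open condition on $\eta$.
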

\begin{cproof}
By the definition of \emph{$(\kappa,\lambda)$-\borelstar} set, there is a $(\kappa,\lambda)$-\borelstar-code $(T,L)$ such that for all $\xi\in\kappa^\kappa$: 
$$\xi\in X\leftrightarrow II\uparrow GB(\xi,(T,L)).$$
By definition, $(T,L)$ is a good labelled $(\kappa,\lambda)$-tree.
Let $\pi$ be the function constructed in Lemma \ref{labelled_to_Borel} to show that $(T,L)$ is a $(\kappa,\lambda)$-\borelstar-code. Let us show that \begin{equation}\label{is a winning}
    \begin{array}{l}
 \{(\xi,\eta) : \eta\in\dom(\pi)\text{ and }\pi(\eta)\mbox{ is a winning strategy}\\ \qquad\qquad\text{of II in }GB(\xi,(T,L))\}\end{array}\end{equation} is $\kappa$-closed. Let $Y$, $\eta_t$, and $\xi_t$ as in Lemma \ref{labelled_to_Borel}.

    Notice that the complement of basic $\kappa$-open sets are $\kappa$-unions of basic $\kappa$-open sets, i.e. are $\kappa$-open. Thus the sets $$(\kappa^\kappa\backslash N_{\zeta_t})\times N_{\eta_t}$$ from the proof of Lemma \ref{labelled_to_Borel}, are $\kappa$-unions of $\kappa$-open sets. Thus $$A=\bigcup_{t\in Y}(\kappa^\kappa\backslash N_{\zeta_t})\times N_{\eta_t}$$ is $\kappa$-open.

    Notice that (\ref{is a winning}) 
    is equal to $(\kappa^\kappa\times\kappa^\kappa\backslash\ A)\cap \left(\kappa^\kappa\times \dom(\pi)\right)$, the intersection of two $\kappa$-closed sets. Thus the set (\ref{is a winning})
    is $\kappa$-closed.
\end{cproof}

We now define the important concept of $\kappa$-$\Sigma^1_1$. In view of Lemma~\ref{dead-end} one has to define this concept with care.

\begin{defn}
We say that a set is $\kappa$-$\Sigma^1_1$ if it is the projection of a $\kappa$-Borel set. A set is $\kappa$-$\Pi^1_1$ if it is the complement of a $\kappa$-$\Sigma^1_1$. A set is $\kappa$-$\Delta^1_1$ if it is both $\kappa$-$\Sigma^1_1$  and $\kappa$-$\Pi^1_1$ 
\end{defn}

Notice that there are only $2^\kappa$ many $\kappa$-$\Sigma^1_1$ sets, but there are $2^{2^\kappa}$ subsets of $\kappa^\kappa$. Hence most subsets of $\kappa^\kappa$ are not $\kappa$-$\Sigma^1_1$.  It is clear from Definition \ref{Borel_star} (iii), that if $X\subseteq \kappa^\kappa$ is $\kappa$-\borelstar, then it is $\kappa$-$\Sigma_1^1$.

\begin{defn}\label{Borel_hier2}
Let us define the following hierarchy.
\begin{itemize}
\item $\kappa$-$\Sigma_0$ is the set of basic $\kappa$-open sets of the form $N_{\eta}$, $|\eta|=1$.
\item If $\alpha$ is even, then $\kappa$-$\Sigma_{\alpha+1}$ is the set of all $\kappa$-unions of $\kappa$-$\Sigma_{\alpha}$-sets.
\item If $\alpha$ is odd, then $\kappa$-$\Sigma_{\alpha+1}$ is the set of all $\kappa$-intersections of $\kappa$-$\Sigma_{\alpha}$-sets.
\item If $\alpha$ is limit, then $\kappa$-$\Sigma_{\alpha}=\cup_{\beta<\alpha}\kappa\text{-}\Sigma_{\beta}$.
\end{itemize}
\end{defn}

Clearly $\kappa$-Borel$=\bigcup_{\alpha<\kappa^+}\kappa\text{-}\Sigma_\alpha$. 

\begin{defn}
We call a function $f\colon\kappa^\kappa\rightarrow\kappa^\kappa$ \emph{$\kappa$-continuous} if the inverse image of a $\kappa$-open set is a $\kappa$-open set.    
\end{defn}

\begin{fact}\label{funcion_borel}
    If $f\colon\kappa^\kappa\rightarrow\kappa^\kappa$ is a $\kappa$-continuous function, then for all $\kappa$-Borel $X\subseteq \kappa^\kappa$, $f^{-1}[X]$ is $\kappa$-Borel.
\end{fact}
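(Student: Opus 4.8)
The plan is to prove this by induction on the $\kappa$-Borel rank, using the hierarchy $\kappa\text{-}\Sigma_\alpha$ from Definition~\ref{Borel_hier2} and the fact that $\kappa$-Borel$=\bigcup_{\alpha<\kappa^+}\kappa\text{-}\Sigma_\alpha$. The key observation is that taking preimages under a fixed $f$ commutes with complements, $\lambda$-unions and $\lambda$-intersections: $f^{-1}[\kappa^\kappa\setminus X]=\kappa^\kappa\setminus f^{-1}[X]$ and $f^{-1}[\bigcup_i X_i]=\bigcup_i f^{-1}[X_i]$, and similarly for intersections. So the class $\mathcal{C}=\{X\subseteq\kappa^\kappa : f^{-1}[X]\text{ is }\kappa\text{-Borel}\}$ is automatically closed under complements and under $\kappa$-unions and $\kappa$-intersections. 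It therefore suffices to show that $\mathcal{C}$ contains all basic $\kappa$-open sets, since the class of $\kappa$-Borel sets is by definition the smallest class containing the basic open sets and closed under these operations; hence $\kappa$-Borel $\subseteq\mathcal{C}$, which is exactly the claim.

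First I would verify the base case: for a basic $\kappa$-open set $N_\eta$, we have $f^{-1}[N_\eta]$ is $\kappa$-open by $\kappa$-continuity of $f$, and a $\kappa$-open set is in particular $\kappa$-Borel (it is a $\kappa$-union of basic $\kappa$-open sets, which are $\kappa$-Borel). So $N_\eta\in\mathcal{C}$. Then the closure properties of $\mathcal{C}$ noted above finish the argument. Alternatively, and essentially equivalently, one can phrase this as an induction on $\alpha<\kappa^+$ showing $f^{-1}[X]$ is $\kappa$-Borel for every $X\in\kappa\text{-}\Sigma_\alpha$: the case $\alpha=0$ uses $\kappa$-continuity as above, successor steps use that preimage commutes with $\kappa$-unions (for $\alpha$ even) or $\kappa$-intersections (for $\alpha$ odd), and limit steps are immediate.

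The only point requiring a small amount of care — and the nearest thing to an obstacle — is the base case, specifically that a $\kappa$-open set is $\kappa$-Borel. This is where the definition of $\kappa$-Borel as being closed under $\kappa$-unions is used: a $\kappa$-open set is by definition a $\kappa$-union of basic $\kappa$-open sets, each of which is $\kappa$-Borel, so the $\kappa$-union is $\kappa$-Borel. (Note we do \emph{not} claim the preimage of a $\kappa$-Borel set under a $\kappa$-continuous map is $\kappa$-open, only that it is $\kappa$-Borel; the preimage of an open set being open is all that continuity gives us directly, and that is all we need to seed the induction.) Everything else is the routine verification that set-theoretic preimage distributes over Boolean operations, which needs no real work.
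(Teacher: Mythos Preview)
Your proposal is correct and takes essentially the same approach as the paper: the paper proceeds by induction on the hierarchy $\kappa\text{-}\Sigma_\alpha$, exactly as in your second phrasing, using $\kappa$-continuity for the base case and the fact that preimage commutes with $\kappa$-unions and $\kappa$-intersections for the successor steps. Your first phrasing via the class $\mathcal{C}$ is just an equivalent repackaging of the same argument using the ``smallest class'' characterization of $\kappa$-Borel directly.
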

\begin{cproof}
    Let us proceed by induction over $\kappa\text{-}\Sigma_\alpha$, from Definition \ref{Borel_hier2}. Since $f$ is $\kappa$-continuous, if $X\in \kappa\text{-}\Sigma_0$, then $f^{-1}[X]$ is $\kappa$-open. Thus $X$ is $\kappa$-Borel. Let us suppose that $\alpha<\kappa^+$ is such that for all $\beta<\alpha$, if $X\in \kappa\text{-}\Sigma_\beta$, then $f^{-1}[X]$ is $\kappa$-Borel. It is clear that if $\alpha$ is limit, then for all $X\in \kappa\text{-}\Sigma_\alpha=\cup_{\beta<\alpha}\kappa\text{-}\Sigma_{\beta}$, $f^{-1}[X]$ is $\kappa$-Borel.
    
    Let us suppose $\alpha=\beta+1$ for some $\beta<\kappa^+$ even (the odd case is similar). Let $X\in \kappa\text{-}\Sigma_\alpha$. So, $X=\bigcup_{\gamma<\kappa}A_\gamma$, where $A_\gamma\in \kappa\text{-}\Sigma_\beta$. Since $A_\gamma$ is $\kappa$-Borel and $f^{-1}[X]=\bigcup_{\gamma<\kappa}f^{-1}[A_\gamma]$, we conclude that $X$ is $\kappa$-Borel.
\end{cproof}


For all $t\in T$ we will denote by $t^-$ the immediate
predecessor of $t$ (in case it exists), i.e. the unique element of $T$ that satisfies:
 \begin{itemize}
     \item $t^-<t$,
     \item there is no $t''\in T$ such that $t^-<t''<t$.
 \end{itemize}
 
 \begin{defn}\label{rank_tree}
     Let $T$ be a tree without infinite branches. For all $t\in T$, we define $rk(t)$ as follows:
     \begin{itemize}
         \item If $t$ is a leaf, then $rk(t)=0$;
         \item if $t$ is not a leaf, then $rk(t)=\cup\{rk(t')+1\mid t'^-=t\}$.
         \item If $T$ is not empty and has a root, $r$, then the rank of $T$ is denoted by $rk(T)$ and is equal to $rk(r)$.
     \end{itemize}
 \end{defn}

    \begin{lemma}\label{Borel_is_Borel*}
        A set $X\subseteq \kappa^\kappa$ is  $\kappa$-Borel if and only if there is a  $\kappa$-\borelstar-code $(T,L)$  such that $T$ is a subtree of $\kappa^{<\omega}$ and for all $\xi\in\kappa^\kappa$: 
$$\xi\in X\leftrightarrow II\uparrow GB(\xi,(T,L)).$$
    In particular, then every $\kappa$-Borel set is a $\kappa$-\borelstar-set.
    \end{lemma}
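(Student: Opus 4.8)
The plan is to prove both directions by induction, using the rank of Borel sets / Borel* trees. For the backward direction, suppose $(T,L)$ is a $\kappa$-\borelstar-code with $T\subseteq\kappa^{<\omega}$; I will show by induction on $rk(T)$ that the set $\{\xi : II\uparrow GB(\xi,(T,L))\}$ is $\kappa$-Borel. If $rk(T)=0$ the root is a leaf, $L(r)$ is a basic $\kappa$-open set, and the associated set is that basic open set, hence $\kappa$-Borel. For the successor case, the immediate successors $t_i$ ($i<\kappa$, say) of the root $r$ are themselves roots of subtrees $T_i\subseteq\kappa^{<\omega}$ of smaller rank; by induction each $B_i=\{\xi : II\uparrow GB(\xi,(T_i,L\restriction T_i))\}$ is $\kappa$-Borel, and the usual unfolding of the \borelstar-game gives $\{\xi : II\uparrow GB(\xi,(T,L))\}=\bigcup_i B_i$ when $L(r)=\bigcup$ and $=\bigcap_i B_i$ when $L(r)=\bigcap$. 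Since there are $\le\kappa$ successors, this is a $\kappa$-union (resp. $\kappa$-intersection) of $\kappa$-Borel sets, hence $\kappa$-Borel. (I should note that $T\subseteq\kappa^{<\omega}$ is not really needed for this direction; it follows from Definition~\ref{def_Borel_star} together with the fact that leaves are labelled by basic $\kappa$-open sets, but the hypothesis $\kappa^\omega=\kappa$ is what makes the tree in the forward direction small enough to be a legitimate $(\kappa,\kappa)$-tree.)

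For the forward direction, suppose $X$ is $\kappa$-Borel; I will build a $\kappa$-\borelstar-code $(T,L)$ with $T$ a subtree of $\kappa^{<\omega}$, by induction along the Borel hierarchy of Definition~\ref{Borel_hier2} (or directly: by induction on a Borel rank of $X$). If $X=N_{\{(i,j)\}}$ is basic $\kappa$-open of the simplest form, take $T$ to be a single node (the root, which is also a leaf) labelled by $N_{\{(i,j)\}}$. More generally a basic $\kappa$-open set $N_\eta$ with $|\dom\eta|<\kappa$ is a $\kappa$-intersection $\bigcap_{i\in\dom\eta}N_{\{(i,\eta(i))\}}$, so it is handled by one $\bigcap$-node with $\le\kappa$ leaves below it; arrange the tree so that each immediate successor sits at level $1$, i.e. is a sequence of length $1$. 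For the inductive step, if $X=\bigcup_{\gamma<\kappa}A_\gamma$ with each $A_\gamma$ having a code $(T_\gamma,L_\gamma)$, $T_\gamma\subseteq\kappa^{<\omega}$, form the tree with a new root labelled $\bigcup$, and attach $T_\gamma$ below the $\gamma$-th immediate successor; concretely, re-index so the node of $T_\gamma$ that was the sequence $s$ becomes $\langle\gamma\rangle{}^\frown s$. The intersection case is dual, with root labelled $\bigcap$. This clearly produces a subtree of $\kappa^{<\omega}$, and one checks that each node has $\le\kappa$ immediate successors, that every increasing sequence has a supremum (trivially, since $T\subseteq\kappa^{<\omega}$ has only finite chains and a chain either is finite or stabilizes), and that there are no $\kappa$-branches (indeed no infinite branches); so $(T,L)$ is a good labelled $(\kappa,\kappa)$-tree, and by Lemma~\ref{labelled_to_Borel} (applicable since $\lambda=\kappa\le\kappa$) it is a $\kappa$-\borelstar-code. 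An unfolding argument identical to the one above shows $\xi\in X\leftrightarrow II\uparrow GB(\xi,(T,L))$.

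The two places where care is needed: first, one must verify that the recursion stays inside $\kappa^{<\omega}$ and that $|T|\le\kappa$ at every stage — this is exactly where $\kappa^\omega=\kappa$ enters, since a tree built by $\le\kappa$-fold branching through finitely many levels has $\le\kappa^\omega=\kappa$ nodes, whereas without this cardinal assumption the union step could blow the size of the tree past $\kappa$ and it would no longer be a $(\kappa,\kappa)$-tree. Second, the equivalence $\xi\in X\leftrightarrow II\uparrow GB(\xi,(T,L))$ in the successor steps is the standard verification that a strategy for II in the game on the glued tree restricts to strategies in the games on the pieces and conversely; I expect this bookkeeping — matching plays in $GB(\xi,(T,L))$ with plays in the $GB(\xi,(T_\gamma,L_\gamma))$ and tracking which leaf the play reaches — to be the main, though routine, obstacle. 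The rest is a direct induction mirroring the definition of the $\kappa$-Borel hierarchy.
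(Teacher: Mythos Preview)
Your proposal is correct and follows essentially the same double induction as the paper: forward along the $\kappa$-$\Sigma_\alpha$ hierarchy, backward on $rk(T)$. The one difference is that in the forward step you simply invoke Lemma~\ref{labelled_to_Borel} to certify that the glued tree is a $\kappa$-\borelstar-code, whereas the paper re-derives the strategy-coding map $\pi$ inductively (combining the $\pi_\gamma$ for the subtrees via a shift map and writing the winning-pair set as a $\kappa$-union of continuous preimages of the inductively given $W_\gamma$). Your shortcut is legitimate and a bit cleaner; the paper's explicit construction is not needed once Lemma~\ref{labelled_to_Borel} is in hand.

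One small correction: your explanation of where $\kappa^\omega=\kappa$ enters is off. A subtree of $\kappa^{<\omega}$ always has size at most $|\kappa^{<\omega}|=\sum_{n<\omega}\kappa^n=\kappa$ for infinite $\kappa$, and the inductive gluing step preserves the bound $|T|\le\kappa$ directly (a $\kappa$-union of trees of size $\le\kappa$ plus one root). So the size constraint in condition~(iii) of Definition~\ref{labelled_tree_star} does not actually require $\kappa^\omega=\kappa$; neither your argument nor the paper's proof visibly uses that hypothesis, and it appears to be a standing assumption for the surrounding material rather than essential to this lemma.
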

    
\begin{cproof}
$\Rightarrow)$ We will show by induction over $\alpha$ that for every $X\in \kappa\text{-}\Sigma_\alpha$, the statement holds.

The result is trivial when $\alpha=0$. 

Suppose $\alpha$ is such that for all $\beta\leq\alpha$ and $X\in \kappa\text{-}\Sigma_\beta$, there is a $\kappa$-\borelstar-code $(T,L)$ such that $(T,L)$ codes $X$ and $T$ is a subtree of $\kappa^{<\omega}$. 
Let us show the case where $\alpha$ is even. The odd case is similar.
Suppose $X\in \kappa$-$\Sigma_{\alpha+1}$, so $X=\bigcup_{\gamma<\kappa}A_\gamma$, where $A_\gamma\in \kappa$-$\Sigma_\alpha$. By the induction hypothesis, there are $\kappa$-\borelstar-codes $\{(T_\gamma,L_\gamma)\}_{\gamma<\kappa}$ such that $(T_\gamma,L_\gamma)$ codes $A_\gamma$ and $T_\gamma$ is a subtree of $\kappa^{<\omega}$, for all $\gamma$. 
Let $\mathcal{T}=\{r\}\cup\bigcup_{\gamma<\kappa}T_\gamma\times\{\gamma\}$ be the tree ordered by:
\begin{itemize}
    \item $r<_\mathcal{T}(x,j)$ for all $(x,j)\in \bigcup_{\gamma<\kappa}T_\gamma\times\{\gamma\}$,
    \item $(x,\gamma)<_\mathcal{T}(y,j)$ if and only if $\gamma=j$ and $x<_{T_\gamma}y$ in $T_\gamma$.
\end{itemize}

Let $T\subseteq \kappa^{<\omega}$ be a tree isomorphic to $\mathcal{T}$ and let $\mathcal{G}:T\rightarrow \mathcal{T}$ be a tree isomorphism. 
If $\mathcal{G}(x)\neq r$, then denote $\mathcal{G}(x)$ by $(\mathcal{G}_1(x),\mathcal{G}_2(x))$. Define $L$ by $L(x)=\cup$ if $G(x)=r$, and $L(x)=L_{\mathcal{G}_2(x)}(\mathcal{G}_1(x))$. 

Let us show that $(T,L)$ codes $X$. Let $\eta\in X$, so there is $\gamma<\kappa$, such that $\eta\in A_\gamma$. 
II starts by choosing $\mathcal{G}^{-1}(x,\gamma)$, where $x$ is the root of $T_\gamma$. II continues playing with the winning strategy from the game $GB(\eta,(T_\gamma,L_\gamma))$, choosing the element given by $\mathcal{G}^{-1}$. We conclude that $II\uparrow GB(\eta,(T,L))$. 

Let $\eta\not\in X$, so for all $\gamma<\kappa$, $\eta\not\in A_\gamma$, so II has no winning strategy for the game $GB(\eta,(T_\gamma,L_\gamma))$. Thus II cannot have a winning strategy for the game $GB(\eta,(T,L))$.

Clearly $(T,L)$ is a good labelled $(\kappa,\kappa)$-tree. By Lemma \ref{labelled_to_Borel}, $(T,L)$ is a $\kappa$-\borelstar-code.

The case $\alpha$ limit is similar to the previous one, since $\kappa$-$\Sigma_{\alpha}=\cup_{\beta<\alpha}\kappa\text{-}\Sigma_{\beta}$ when $\alpha$ is limit.

$\Leftarrow)$ We will use induction over the rank of $T$, to show that if $(T,L)$ is a good labelled $(\kappa,\kappa)$-tree such that $T$ is a subtree of $\kappa^{<\omega}$, then the set $X$ such that for all $\xi\in\kappa^\kappa$: 
$$\xi\in X\leftrightarrow II\uparrow GB(\xi,(T,L)),$$ is a $\kappa$-Borel set. Notice that this implies that if $(T,L)$ is a $\kappa$-\borelstar-code where $T$ is a subtree of $\kappa^{<\omega}$, then it codes a $\kappa$-Borel set.

If $rk(T)=0$, then $T$ has only one node $r$, thus $X=L(r)$ and $X$ is a basic $\kappa$-open set. Let $\alpha<\kappa^+$ be such that for all good labelled $(\kappa,\kappa)$-tree $(T',L')$ with $T'$ a subtree of $\kappa^{<\omega}$ and $rk(T')<\alpha$, $(T',L')$ codes a $\kappa$-Borel set. 
 
Let $(T,L)$ be a  good labelled $(\kappa,\kappa)$-tree  such that $T$ is a subtree of $\kappa^{<\omega}$, with $rk(T)=\alpha$, and $X$ is such that for all $\xi\in\kappa^\kappa$: 
$$\xi\in X\leftrightarrow II\uparrow GB(\xi,(T,L)).$$ Let $B=\{t\in T\mid t^-=r\}$, where $r$ is the root of $T$. Notice that $|B|\leq \kappa$ by Definition \ref{labelled_tree_star} (ii).
For all $t\in B$, define the good labelled $(\kappa,\kappa)$-tree $(T_t,L_t)$ as follows:
\begin{itemize}
    \item $T_t=\{x\in T\mid t\leq x\}$,
    \item $L_t=L\restriction T_t$.
\end{itemize}
Since $rk(T)=\alpha$, for all $t\in B$, $rk(T_t)<\alpha$. By the induction hypothesis, for all $t\in B$ the set $X_t$ of all $\xi\in\kappa^\kappa$ such that 
$II\uparrow GB(\xi,(T_t,L_t))$, is a $\kappa$-Borel set.

It is easy to see that, if $L(r)=\cup$, then $X=\cup_{t\in B}X_t$. Also, if $L(r)=\cap$, then $X=\cap_{t\in B}X_t$.

Since the class of $\kappa$-Borel sets is closed under unions and intersections of length $\kappa$, the proof follows. 
\end{cproof}
\begin{lemma}
    Every $\kappa$-Borel set is the projection of a $\kappa$-closed set (and hence $\kappa$-$\Sigma^1_1$).
\end{lemma}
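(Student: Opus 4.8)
The plan is to route the proof through $(\kappa,\kappa)$-\borelstar-codes. The key step is to show that every $\kappa$-Borel $X\subseteq\kappa^\kappa$ admits a \emph{good} labelled $(\kappa,\kappa)$-tree $(T,L)$ (in the sense of Definition~\ref{labelled_tree_star}) with $\xi\in X$ iff $II\uparrow GB(\xi,(T,L))$, for all $\xi\in\kappa^\kappa$. Once this is in hand the conclusion is immediate: by Lemma~\ref{labelled_to_Borel} (applicable since $\lambda=\kappa\le\kappa$) such a tree is automatically a $(\kappa,\kappa)$-\borelstar-code, so $X$ is $(\kappa,\kappa)$-\borelstar\ by Definition~\ref{def_Borel_star}; by Corollary~\ref{Borel*_proj_closed} with $\lambda=\kappa$, $X$ is then the projection of a $\kappa$-closed set; and a $\kappa$-closed set, being the complement of a $\kappa$-open one, is $\kappa$-Borel, so $X$ is the projection of a $\kappa$-Borel set, i.e.\ $\kappa$-$\Sigma^1_1$.

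To build the tree I would induct on $\alpha<\kappa^+$ over the hierarchy of Definition~\ref{Borel_hier2} (recall $\kappa$-Borel $=\bigcup_{\alpha<\kappa^+}\kappa$-$\Sigma_\alpha$), repeating the tree construction already carried out in the $\Rightarrow$ direction of the proof of Lemma~\ref{Borel_is_Borel*}; the only difference is that I do not insist on $T\subseteq\kappa^{<\omega}$, so no cardinal arithmetic assumption is needed here. Explicitly: for $X=N_{\{(i,j)\}}\in\kappa$-$\Sigma_0$ take the one-node tree whose root is a leaf labelled $N_{\{(i,j)\}}$; for $X=\bigcup_{\gamma<\kappa}A_\gamma$ with every $A_\gamma$ at a strictly lower level of the hierarchy, take codes $(T_\gamma,L_\gamma)$ for the $A_\gamma$ from the induction hypothesis, place a fresh root labelled $\bigcup$ above disjoint copies of the $T_\gamma$, and verify exactly as in Lemma~\ref{Borel_is_Borel*} that $II$ wins $GB(\xi,(T,L))$ iff $II$ wins $GB(\xi,(T_\gamma,L_\gamma))$ for some $\gamma$, i.e.\ iff $\xi\in X$; the $\bigcap$ case is dual and the limit case is vacuous since $\kappa$-$\Sigma_\lambda=\bigcup_{\beta<\lambda}\kappa$-$\Sigma_\beta$.

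The point I expect to require genuine care — and the only such point — is that this $(T,L)$ really is a \emph{good} labelled $(\kappa,\kappa)$-tree even when the hierarchy level $\alpha$ is large, $\kappa\le\alpha<\kappa^+$. Clauses (ii), (v) and (vi) of Definition~\ref{labelled_tree_star} are clear from the construction, and clause (iii), $|T|\le\kappa$, survives because each step glues $\le\kappa$ trees of size $\le\kappa$ beneath one new root and $\kappa\cdot\kappa=\kappa$. For clauses (i) and (iv) I would show that all branches of $(T,L)$ are in fact \emph{finite}: letting $\rho(Y)$ be the least $\delta$ with $Y\in\kappa$-$\Sigma_\delta$, the value $\rho(Y)$ is never a limit ordinal (once more because $\kappa$-$\Sigma_\lambda=\bigcup_{\beta<\lambda}\kappa$-$\Sigma_\beta$), so whenever $\rho(X)=\delta+1$ the witnesses $A_\gamma$ in the construction may be taken with $\rho(A_\gamma)\le\delta<\rho(X)$; making that choice throughout, every edge of $(T,L)$ descends from a sub-code of one $\rho$-value to a sub-code of strictly smaller $\rho$-value, so each branch gives a strictly decreasing sequence of ordinals and must be finite. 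Finite branches yield clause (i) (no $\kappa$-branch) and clause (iv) (every chain, being finite, has a greatest element, hence a supremum) at one stroke.

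There is also a more pedestrian route that avoids \borelstar\ altogether: induct on $\kappa$-$\Sigma_\alpha$ and show directly that $X$ is the first projection of a $\kappa$-closed $C\subseteq\kappa^\kappa\times\kappa^\kappa$, handling a $\kappa$-union by recording the index of the successful witness in a reserved coordinate of the second factor, and a $\kappa$-intersection by packing a $\kappa$-indexed family of witnesses along a bijection $\kappa\times\kappa\cong\kappa$ (using that a $\kappa$-intersection of $\kappa$-closed sets is $\kappa$-closed). Since the hierarchy of Definition~\ref{Borel_hier2} has no explicit complementation step, this suffices; but it essentially re-derives Corollary~\ref{Borel*_proj_closed}, so the argument through $(\kappa,\kappa)$-\borelstar\ seems cleaner.
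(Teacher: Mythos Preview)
Your approach is correct and essentially the same as the paper's, which simply cites Corollary~\ref{Borel*_proj_closed} and Lemma~\ref{Borel_is_Borel*}. Your explicit construction of the good labelled $(\kappa,\kappa)$-tree reproves the forward direction of Lemma~\ref{Borel_is_Borel*} while correctly noting that the hypothesis $\kappa^\omega=\kappa$ is unnecessary for this direction (since every branch corresponds to a strictly decreasing sequence of hierarchy levels and is therefore finite), which is a useful clarification given that the present lemma carries no such assumption.
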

\begin{cproof}
It follows from Corollary \ref{Borel*_proj_closed} and Lemma \ref{Borel_is_Borel*}.
\end{cproof}

\begin{cor} The
    $\kappa$-$\Sigma^1_1$ sets are projections of $\kappa$-closed sets.
\end{cor}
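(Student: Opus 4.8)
The plan is to unwind the definition of $\kappa$-$\Sigma^1_1$ and reduce everything to the lemma just proved. Let $A\subseteq\kappa^\kappa$ be $\kappa$-$\Sigma^1_1$. By definition there is a $\kappa$-Borel set $B\subseteq\kappa^\kappa\times\kappa^\kappa$ with $A=\{f:\exists g\,((f,g)\in B)\}$. The previous lemma says that $\kappa$-Borel \emph{subsets of $\kappa^\kappa$} are projections of $\kappa$-closed sets; the first task is to make the same available for the $\kappa$-Borel subset $B$ of the product space, after which one simply composes the two projections and merges the two ``witness'' coordinates into a single one.

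Concretely I would proceed as follows. Fix once and for all a bijection $\sigma\colon\kappa\to\kappa\sqcup\kappa$. It induces a bijection $\Phi\colon\kappa^\kappa\to\kappa^\kappa\times\kappa^\kappa$ that carries each basic $\kappa$-open set $N_\theta$ (with $|\dom\theta|<\kappa$) to a basic $\kappa$-open set $N_\eta\times N_\xi$ (with $|\dom\eta|,|\dom\xi|<\kappa$) of the product, and conversely; hence both $\Phi$ and $\Phi^{-1}$ are $\kappa$-continuous, so by the argument of Fact~\ref{funcion_borel} (which applies verbatim here) they send $\kappa$-Borel sets to $\kappa$-Borel sets, and — since preimage commutes with complementation — $\kappa$-closed sets to $\kappa$-closed sets. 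Thus $\Phi^{-1}[B]$ is a $\kappa$-Borel subset of $\kappa^\kappa$, and by the previous lemma there is a $\kappa$-closed $D\subseteq\kappa^\kappa\times\kappa^\kappa$ with $\Phi^{-1}[B]=\{x:\exists y\,((x,y)\in D)\}$. Unravelling, $(f,g)\in B$ iff $\exists y\,((\Phi^{-1}(f,g),y)\in D)$; since the map $(f,g,y)\mapsto(\Phi^{-1}(f,g),y)$ from $(\kappa^\kappa)^3$ to $(\kappa^\kappa)^2$ is $\kappa$-continuous and a $\kappa$-continuous preimage of a $\kappa$-closed set is $\kappa$-closed, the set $C=\{(f,g,y):(\Phi^{-1}(f,g),y)\in D\}$ is a $\kappa$-closed subset of $(\kappa^\kappa)^3$ whose projection to the first two coordinates is $B$. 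Hence $f\in A$ iff $\exists g\,\exists y\,((f,g,y)\in C)$. Finally, fixing another such $\kappa$-homeomorphism $\Psi\colon\kappa^\kappa\to\kappa^\kappa\times\kappa^\kappa$ and writing $\Psi(u)=(\Psi_0(u),\Psi_1(u))$, the set $C^{*}=\{(f,u):(f,\Psi_0(u),\Psi_1(u))\in C\}$ is again a $\kappa$-continuous preimage of $C$, hence $\kappa$-closed, and $A=\{f:\exists u\,((f,u)\in C^{*})\}$ exhibits $A$ as the projection of the $\kappa$-closed set $C^{*}\subseteq\kappa^\kappa\times\kappa^\kappa$.

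The construction of $\kappa$-homeomorphisms $\kappa^\kappa\cong\kappa^\kappa\times\kappa^\kappa$ from a bijection $\kappa\cong\kappa\sqcup\kappa$, and the observation that a $\kappa$-continuous preimage of a $\kappa$-closed set is $\kappa$-closed, are routine and immediate from the definitions. The only point that deserves a moment's attention is that the previous lemma was stated for subsets of $\kappa^\kappa$, whereas $B$ lives in $\kappa^\kappa\times\kappa^\kappa$; this is exactly what the homeomorphism $\Phi$ is used for (equivalently, one may simply note that the \borelstar\ apparatus of this section transfers to $\kappa^\kappa\times\kappa^\kappa$ through $\Phi$). I expect no genuine obstacle: once the coordinate bookkeeping is in place, the corollary is an immediate consequence of the previous lemma together with the elementary closure properties of $\kappa$-continuous maps.
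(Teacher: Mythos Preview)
Your proposal is correct and follows the same approach as the paper: write $A$ as the projection of a $\kappa$-Borel $B$, apply the previous lemma to get $B$ as the projection of a $\kappa$-closed set, and compose the projections. The paper's proof consists of exactly these three sentences and leaves all the coordinate bookkeeping implicit; you have simply spelled out the two points the paper glosses over (transferring the lemma from $\kappa^\kappa$ to $\kappa^\kappa\times\kappa^\kappa$ via a $\kappa$-homeomorphism, and merging the two witness coordinates), so your argument is a more detailed version of the same proof.
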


\begin{cproof}
    Let $A$ be a $\kappa$-$\Sigma_1^1$ set. Thus, $A$ is the projection of a $\kappa$-Borel set $B$. By the previous Lemma, $B$ is the projection of a $\kappa$-closed set. Thus $A$ is the projection of a $\kappa$-closed set.
\end{cproof}

We now define another variety of Borelness which turns out to be useful for us. 

\begin{defn}
\mbox{}
\begin{itemize}
    \item     We define \emph{weak good labelled $\kappa$-trees} as in Definition \ref{labelled_tree_star} by omitting (iii).
    \item     We define  \emph{weak $\kappa$-\borelstar-codes} as in Definition \ref{Borel_star} by using weak good labelled $\kappa$-trees so that in (i) in addition we require that $dom(\pi)=\kappa^\kappa$ and in (iii) we require that the displayed set is closed.
    \item     We define \emph{weak $\kappa$-\borelstar} sets as in Definition \ref{def_Borel_star} using weak $\kappa$-\borelstar-codes.
\end{itemize}

Note that the weak good labelled $\kappa$-trees are exactly the good labelled $(\kappa,2^\kappa)$-trees.

\end{defn}

\begin{lemma}
    Suppose $2^\omega=2^{\kappa}$, where $\kappa>\omega$. Then every $A\subseteq \kappa^\kappa$ is a weak $\kappa$-\borelstar set.
\end{lemma}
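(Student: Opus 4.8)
The plan is to produce, for an arbitrary $A\s\kappa^\kappa$, one weak $\kappa$-\borelstar-code in which the entire information about $A$ is stored in a single limit level of the coding tree, so that conditions (ii) and (iii) of Definition~\ref{Borel_star} stay easy to meet. (Recall that for a \emph{weak} code condition~(iii) only asks that the relevant set of pairs be weak $\kappa$-Borel.) The hypothesis enters exactly through the count $|\kappa^\kappa|=2^\kappa=2^\omega=\kappa^\omega$, valid since $\kappa>\omega$; so fix a bijection $g\colon\kappa^\kappa\to\kappa^\omega$, thinking of $g(\xi)$ as a ``short name'' for $\xi$, much as in the proof of Lemma~\ref{dead-end}.

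First I would build the tree. Let $T$ be $\kappa^{<\omega}$ ordered by end-extension, together with all its limits $\kappa^\omega$, and, for every $s\in g[A]$ and every $\alpha<\kappa$, a leaf $t_{s,\alpha}$ placed immediately above $s$. Label every node of $\kappa^{<\omega}$ by $\bigcup$; label $s\in\kappa^\omega$ by $\bigcap$ when $s\in g[A]$, and otherwise make $s$ a leaf with $L(s)=\emptyset$; and set $L(t_{s,\alpha})=N_{\{(\alpha,\,g^{-1}(s)(\alpha))\}}$. Checking that $(T,L)$ is a weak good labelled $\kappa$-tree is routine: every maximal chain has order type $\omega+1$ or $\omega+2<\kappa$, so there are no $\kappa$-branches; every node has $\le\kappa$ immediate successors; every increasing sequence is finite or has its supremum (an element of $\kappa^\omega$) in $T$; non-leaves carry $\bigcup$ or $\bigcap$; and leaves carry basic $\kappa$-open sets (the empty set being permitted). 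In the game $GB(\xi,(T,L))$ player II has no real choice after the declaration phase, so a strategy of II is, up to irrelevant off-line moves, a choice of declaration $s=\langle a_0,a_1,\ldots\rangle\in\kappa^\omega$; playing $s$ wins exactly when the play does not die at the $\emptyset$-leaf (i.e.\ $s\in g[A]$) and, since I picks $\alpha$ at the $\bigcap$-node above $s$, one has $\xi(\alpha)=g^{-1}(s)(\alpha)$ for \emph{all} $\alpha<\kappa$ --- that is, exactly when $s\in g[A]$ and $\xi=g^{-1}(s)$. Hence $II\uparrow GB(\xi,(T,L))$ iff $g(\xi)\in g[A]$, i.e.\ iff $\xi\in A$, which is the equivalence required in Definition~\ref{def_Borel_star}.

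It remains to supply the function $\pi$. Put $\dom(\pi)=\kappa^\kappa$, which is $\kappa$-closed, and let $\pi(\eta)$ be the strategy of II that declares $\eta\restriction\omega$ (with any fixed default off that line). Every strategy of II is of this form, so condition~(ii) of Definition~\ref{Borel_star} holds trivially: a winning strategy declares some $s$, and then $\pi(\eta)$ is winning for any $\eta$ with $\eta\restriction\omega=s$. For condition~(iii), the set of pairs $(\xi,\eta)$ for which $\pi(\eta)$ is a winning strategy equals
\[
\{(\xi,\eta):\eta\restriction\omega\in g[A]\}\ \cap\ \{(\xi,\eta):\xi=g^{-1}(\eta\restriction\omega)\}.
\]
The first set is $\bigcup_{s\in g[A]}\kappa^\kappa\times N_s$, hence open; the complement of the second is $\bigcup\{N_{\{(\alpha,\beta)\}}\times N_s:\alpha<\kappa,\ s\in\kappa^\omega,\ \beta\ne g^{-1}(s)(\alpha)\}$, hence the second set is closed. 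Both are therefore weak $\kappa$-Borel, and so is their intersection. Thus $(T,L)$ with this $\pi$ is a weak $\kappa$-\borelstar-code witnessing that $A$ is weak $\kappa$-\borelstar. The one genuine difficulty is condition~(iii): one must keep the description of II's winning strategies inside the weak $\kappa$-Borel class even though $A$ is arbitrary, and the whole construction is arranged precisely around this, recording $A$ solely through which limit nodes of $\kappa^\omega$ are ``live'', with everything below a live node reduced to a single coordinate test that I may challenge.
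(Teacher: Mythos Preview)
Your proof is correct and follows essentially the same strategy as the paper: encode each $\xi\in\kappa^\kappa$ by an element of $\kappa^\omega$ via a bijection, let II declare such a code in the first $\omega$ moves, and then at a $\bigcap$-node let I challenge coordinates. The cosmetic differences---single-coordinate leaf labels $N_{\{(\alpha,\cdot)\}}$ versus the paper's initial-segment labels $N_{\Pi(\eta\restriction\omega)\restriction\eta(\omega)}$, and making $s\notin g[A]$ itself a leaf rather than a $\bigcap$-node with $\emptyset$-labelled successors---are immaterial. Your treatment of condition~(iii) is in fact slightly more careful than the paper's: you correctly decompose the winning-strategy set as the intersection of the open piece $\{(\xi,\eta):\eta\restriction\omega\in g[A]\}$ with the closed piece $\{(\xi,\eta):\xi=g^{-1}(\eta\restriction\omega)\}$, whereas the paper's write-up identifies $W$ only with the latter.
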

\begin{cproof}
    Let $\Pi:\kappa^\omega\rightarrow\kappa^\kappa$ be a bijection and $A\subseteq\kappa^\kappa$. Let $T=\kappa^{\leq\omega+1}$, and define $L$ as follows:
    \begin{itemize}
        \item $L(\eta)=\cup$ if $dom(\eta)<\omega$;
        \item $L(\eta)=\cap$ if $dom(\eta)=\omega$;
        \item If $dom(\eta)=\omega+1$, then 
        $$L(\eta)=\begin{cases}     N_{\Pi(\eta\restriction\omega)\restriction\eta(\omega)} &\mbox{if } \Pi(\eta\restriction\omega)\in A,\\
        \emptyset &\mbox{otherwise. }\end{cases}$$  
    \end{itemize}
    Clearly $(T,L)$ is a weak good labelled $\kappa$-tree. Let us show that $$\xi\in A\leftrightarrow II\uparrow GB(\xi,(T,L)).$$ Let $\xi$ be such that $II\uparrow GB(\xi,(T,L))$, then there is $\eta\in T$ with $dom(\eta)=\omega+1$, such that $\xi\in L(\eta)$. Thus $L(\eta)\neq\emptyset$ and $\Pi(\eta\restriction\omega)\in A$. Clearly $\xi\in \bigcap_{\alpha<\kappa}N_{\Pi(\eta\restriction\omega)\restriction\alpha}$ if and only if $\xi=\Pi(\eta\restriction\omega)$. Since $\Pi(\eta\restriction\omega)\in A$, $\xi\in A$. 

    Let $\xi\in A$. There is $\eta\in \kappa^\omega$ such that $\Pi(\eta)=\xi$. $II$ could play such that after $\omega$ moves the game is at $\eta$. Since $\Pi(\eta)=\xi$, it doesn't matter where $I$ moves, $II$ will win.

     Let $\pi(\zeta)$ be the strategy in which II copies $\zeta\restriction\omega$. Let us show that $$W=\{(\xi,\zeta) : \pi(\zeta)\mbox{ is a winning strategy of II in }GB(\xi,(T,L))\}$$ is 
     closed. Notice that $W=\{(\xi,\zeta)\mid \xi=\Pi(\zeta\restriction\omega)\}$. Let $\xi,\zeta\in \kappa^\kappa$ be such that $(\xi,\zeta)\notin W$. Thus $\xi\neq\Pi(\zeta\restriction\omega)$. Let $\alpha<\kappa$ be such that $\xi(\alpha)\neq\Pi(\zeta\restriction\omega)(\alpha)$. Thus for all $(\xi',\zeta')\in N_{\xi\restriction\{\alpha\}}\times N_{\zeta\restriction\omega}$, $(\xi',\zeta')\notin W$. We conclude that $W$ is indeed closed.
    
\end{cproof}
Notice that $(T,L)$ in the previous lemma is not a $\kappa$-\borelstar-code.

We now prove a kind of Souslin-Kleene Theorem for $\kappa^\kappa$.
\begin{lemma}
If 
 $X\subseteq\kappa^\kappa$ is $\kappa$-$\Delta^1_1$, then it is $(\kappa,\kappa^{<\kappa})$-\borelstar.
\end{lemma}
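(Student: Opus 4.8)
The plan is to run a Suslin--Kleene / Lusin--separation argument, but carried out directly at the level of \borelstar-games rather than by building a $\kappa$-Borel set; allowing a code of size $\kappa^{<\kappa}$ is precisely what removes the need for any boundedness estimate on ranks.

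\smallskip\noindent\emph{Step 1 (tree representations).} Since $X$ is $\kappa$-$\Sigma^1_1$ and $\kappa$-$\Pi^1_1$, there are $\kappa$-closed $C,D\subseteq\kappa^\kappa\times\kappa^\kappa$ with $X=p[C]$ and $\kappa^\kappa\setminus X=p[D]$. I would first record that every $\kappa$-closed $E\subseteq\kappa^\kappa\times\kappa^\kappa$ has the form $[T]:=\{(\xi,g):(\xi\restriction\gamma,g\restriction\gamma)\in T\text{ for all }\gamma<\kappa\}$ for a tree $T$ on $\kappa\times\kappa$: writing $(\kappa^\kappa)^2\setminus E=\bigcup_{i<\kappa}N_{\sigma_i}\times N_{\tau_i}$ with $\dom\sigma_i,\dom\tau_i$ of size $<\kappa$, let $(s,t)$ of common length $\beta$ lie in $T$ iff there is no $i$ with $\dom\sigma_i\cup\dom\tau_i\subseteq\beta$, $\sigma_i\subseteq s$ and $\tau_i\subseteq t$; then $T$ is a tree and, using the regularity of $\kappa$, $[T]=E$. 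Fix trees $T_0,S_0$ on $\kappa\times\kappa$ with $X=p[[T_0]]$ and $\kappa^\kappa\setminus X=p[[S_0]]$; note $[T_0]$ and $[S_0]$ are the original $\kappa$-closed sets $C,D$, so they are $\kappa$-closed.

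\smallskip\noindent\emph{Step 2 (the game).} Let $U$ be the tree whose nodes are triples $(s,u,v)$ of sequences of ordinals $<\kappa$ with $\dom s=\dom u$, with $\dom v$ equal to $\dom s$ (call the node \emph{synchronised}) or, when it exists, to the predecessor of $\dom s$ (\emph{unsynchronised}), and such that $(s,u)\in T_0$ and $(s\restriction\dom v,v)\in S_0$; order $U$ by coordinatewise end-extension. A $\kappa$-branch of $U$ would produce $(\xi,g)\in[T_0]$ and $(\xi,h)\in[S_0]$, i.e.\ $\xi\in X\cap(\kappa^\kappa\setminus X)$, so $U$ has no $\kappa$-branch; moreover $|U|\le(\kappa^{<\kappa})^3=\kappa^{<\kappa}$ and every node has $\le\kappa$ immediate successors. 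Turn $U$ into a good labelled $(\kappa,\kappa^{<\kappa})$-tree $(\mathcal T,\mathcal L)$ as follows: at synchronised nodes put $\mathcal L=\bigcup$, so that II moves, extending $(s,u)$ -- II builds a candidate first coordinate together with a $T_0$-witness $g$; at unsynchronised nodes put $\mathcal L=\bigcap$, so that I moves, extending $v$ -- I builds an $S_0$-witness $h$; take suprema of increasing sequences, adjoining the supremum triple $(s^\ast,u^\ast,v^\ast)$ as a leaf whenever it falls out of $U$. A leaf $(s^\ast,u^\ast,v^\ast)$ gets label $N_{s^\ast}$ when it was reached because I had no move, or at a limit where the $S_0$-side failed but the $T_0$-side survived (equivalently, the leaf is unsynchronised, or it is a limit node with $(s^\ast,u^\ast)\in T_0$); otherwise its label is $\emptyset$. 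Recalling that in $GB(\xi,(\mathcal T,\mathcal L))$ player II chooses successors at $\bigcup$-nodes, I at $\bigcap$-nodes, and II wins a play ending at a leaf $t$ iff $\xi\in\mathcal L(t)$: since $U$ has no $\kappa$-branch every play stops after $<\kappa$ moves, and a case analysis gives that II wins iff $\xi\in X$. Indeed, if $\xi\in X$, II plays so that $(s,u)$ follows a genuine branch $(\xi,g)\in[T_0]$; the $T_0$-side then never fails, the $S_0$-side must fail (as $\xi\notin p[[S_0]]$), and the play ends at a leaf labelled $N_{s^\ast}$ with $s^\ast\subseteq\xi$, so II wins. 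If $\xi\notin X$, I plays so that $v$ follows a genuine branch $(\xi,h)\in[S_0]$; now whatever $g$ II produces fails to satisfy $(\xi,g)\in[T_0]$, so the $T_0$-side fails and the play ends at a leaf labelled $\emptyset$, while playing ``off $\xi$'' in the first coordinate only sends II to a leaf $N_{s^\ast}$ with $s^\ast\not\subseteq\xi$; either way I wins, so II has no winning strategy.

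\smallskip\noindent\emph{Step 3 (the coding function).} The delicate point is that a strategy of II is not allowed to consult the parameter $\xi$ of $GB(\xi,\cdot)$, whereas the strategy in Step 2 wants to set the first coordinate equal to $\xi$. I would resolve this by parametrising: fix a bijection $\kappa^\kappa\cong\kappa^\kappa\times\kappa^\kappa$, set $\dom\pi=\kappa^\kappa$, and let $\pi(\langle\xi',g\rangle)$ be the strategy that at a node $(s,u,v)$ moves to $(s^\frown\xi'(\dom s),\,u^\frown g(\dom s),\,v)$ when this is legal and moves arbitrarily otherwise. Then, by the analysis in Step 2, $\pi(\langle\xi',g\rangle)$ is a winning strategy for II in $GB(\xi,(\mathcal T,\mathcal L))$ exactly when $\xi'=\xi$ and $(\xi,g)\in[T_0]$; hence the set of winning pairs $(\xi,\langle\xi',g\rangle)$ is the intersection of $\{\xi=\xi'\}$ with the preimage of $[T_0]$ under the coordinate map $(\xi,\langle\xi',g\rangle)\mapsto(\xi,g)$, an intersection of two $\kappa$-closed sets and so $\kappa$-Borel. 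This is Definition~\ref{Borel_star}(iii); $\dom\pi=\kappa^\kappa$ is $\kappa$-closed, giving (i), and (ii) follows since $\xi\in X$ yields some $g$ with $(\xi,g)\in[T_0]$ and hence a winning $\pi(\langle\xi,g\rangle)$. Thus $(\mathcal T,\mathcal L)$ together with $\pi$ is a $(\kappa,\kappa^{<\kappa})$-\borelstar-code for $X$. Beyond this $\xi$-parametrisation trick, the main obstacle I anticipate in the write-up is the bookkeeping at limit stages -- exactly which supremum triples become leaves and how each is labelled, and checking that the genuine-witness plays never stumble into an ``unintended'' leaf -- but once that is pinned down, the size bound $\kappa^{<\kappa}$ and the absence of $\kappa$-branches are immediate from the well-foundedness of $U$.
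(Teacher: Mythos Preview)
Your overall plan---a Suslin--Kleene game on triples, with well-foundedness coming from $p[[T_0]]\cap p[[S_0]]=\emptyset$---is exactly the paper's, but Step~3 contains a genuine gap. The equivalence ``$\pi(\langle\xi',g\rangle)$ wins $GB(\xi,(\mathcal T,\mathcal L))$ iff $\xi'=\xi$ and $(\xi,g)\in[T_0]$'' is false. Suppose $(\xi',g)\in[T_0]$ and $\xi'\ne\xi$ first differ at position~$\alpha$. Then II's intended moves are always legal, and every play terminates at a leaf labelled $N_{\xi'\restriction\gamma}$ where $\gamma$ is governed solely by how far I can push $v$ inside $S_0(\xi')=\{v:(\xi'\restriction\dom v,v)\in S_0\}$. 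If $S_0(\xi')$ happens to contain no node of length $\ge\alpha$---which is perfectly possible (take $X=\kappa^\kappa$, $S_0=\{\emptyset\}$, and any $\xi'$ with $\xi'(0)=\xi(0)$)---then every such $\gamma$ satisfies $\xi'\restriction\gamma=\xi\restriction\gamma$ and II wins regardless of I's play. So the set of winning pairs is strictly larger than what you compute, and you have not shown it is $\kappa$-Borel; the heights of the section trees $S_0(\xi')$ now enter the description, and there is no evident reason this should be $\kappa$-Borel in $(\xi,\xi',g)$.

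The paper handles exactly this difficulty with two devices you are missing. First, the roles are swapped: player~I builds the first coordinate $\eta_0$ together with the $Z$-witness $\eta_1$, while II builds only the $Y$-witness $\eta_2$; leaves are labelled $\kappa^\kappa\setminus N_{\eta_0}$, so II wins automatically whenever I fails to track $\xi$. With $\pi(\eta)$ simply ``play $\eta_2$ along $\eta$'', one would like $W=Y$, but the same obstruction arises: if $\xi\in X$ then I may run out of $Z$-side moves before II's false $Y$-witness is exposed. The paper's second device removes this: a parallel family of nodes $(\eta_0,\eta_2,\alpha)\in(\kappa^{<\kappa})^2\times\kappa$ is grafted onto the tree, encoding a \emph{bounded} game of prescribed length $\le\alpha$ in which the $Z$-condition is dropped entirely and only $N_{(\eta_0,\eta_2)}\cap Y\ne\emptyset$ is checked. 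Now if $(\xi,\eta)\notin Y$, $\kappa$-closedness of $Y$ gives $\alpha$ with $N_{(\xi\restriction\alpha,\eta\restriction\alpha)}\cap Y=\emptyset$; I enters the $\alpha$-bounded branch, copies $\xi$ into $\eta_0$, and defeats $\pi(\eta)$. This yields $W=Y$ on the nose, hence $\kappa$-closed. Your tree has no analogue of this bounded branch; adding one (an option for I to fix in advance a length $\alpha<\kappa$ and play a game in which only the $T_0$-condition is tested) is what is needed to repair your argument.
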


\begin{cproof}

Let $X\subseteq \kappa^\kappa$ be a $\kappa$-$\Delta_1^1$. Let $Y$ and $Z$ be $\kappa$-closed such that $X=pr_1(Y)$ and $\kappa\backslash X=pr_1(Z)$.

Let $T$ consist of triples $(x_0,x_1,x_2)$ that satisfies one of the following:
\begin{itemize}
    \item $(\eta_0,\eta_1,\eta_2)\in (\kappa^{<\kappa})^3$ such that one of the following holds:
    \begin{itemize}
        \item $\dom(\eta_0)=\dom(\eta_1)=\dom(\eta_2)$;
        \item $\dom(\eta_0)=\dom(\eta_1)=\dom(\eta_2)+1$.
    \end{itemize}
    The triples are ordered by coordinate wise inclusion. 
    A triplet $(\eta'_0,\eta'_1,\eta'_2)$ is not a leaf 
    if and only if $N_{(\eta_0',\eta_1')}\cap Z\neq\emptyset$ and $N_{(\eta_0',\eta_2')}\cap Y\neq\emptyset$.
    \item $(\eta_0,\eta_2,\alpha)\in (\kappa^{<\kappa})^2\times\kappa$ such that $\alpha\neq 0$ and one of the following holds:
    \begin{itemize}
        \item $\dom(\eta_0)=\dom(\eta_2)\leq \alpha$,
        \item $\dom(\eta_0)=\dom(\eta_2)+1\leq \alpha$.
    \end{itemize}
    The triples are ordered by $(\eta_0,\eta_2,\alpha)\leq (\eta'_0,\eta'_2,\beta)$ if $\eta_0\subseteq\eta'_0$, $\eta_2\subseteq\eta'_2$, and $\alpha=\beta$. 
    If $(\eta'_0,\eta'_2,\alpha)$ is not a leaf, 
    then $N_{(\eta_0',\eta_2')}\cap Y\neq\emptyset$.
\end{itemize}
Finally we say that $(\emptyset,\emptyset,\emptyset)\leq (x_0,x_1,x_2)$ for all the triples, independent of which kind it is.
Let us define the labelled function, $L$, for $T$:
\begin{itemize}
    \item Case $t=(\eta_0,\eta_1,\eta_2)\in (\kappa^{<\kappa})^3$:
    \begin{itemize}
        \item If $t$ is not a leaf and $\dom(\eta_0)=\dom(\eta_2)$, then $L(t)=\cap$. If $t$ is not a leaf and $\dom(\eta_0)\neq\dom(\eta_2)$, then $L(t)=\cup$.
        \item If $t$ is a leaf and $N_{(\eta_0,\eta_1)}\cap Z=\emptyset$, then $L(t)=\kappa^\kappa$. If $t$ is a leaf and $N_{(\eta_0,\eta_1)}\cap Z\neq\emptyset$, then $L(t)=\kappa^\kappa\backslash N_{\eta_0}$.
    \end{itemize}
    \item Case $t=(\eta_0,\eta_2,\alpha)\in (\kappa^{<\kappa})^2\times\kappa$:
    \begin{itemize}
        \item If $t$ is not a leaf and $\dom(\eta_0)=\dom(\eta_2)$, then $L(t)=\cap$. If $t$ is not a leaf and $\dom(\eta_0)\neq\dom(\eta_2)$, then $L(t)=\cup$.
        \item If $t$ is a leaf and $N_{(\eta_0,\eta_2)}\cap Y=\emptyset$, then $L(t)=\kappa^\kappa\backslash N_{\eta_0}$. If $t$ is a leaf and $N_{(\eta_0,\eta_2)}\cap Y\neq\emptyset$, then $L(t)=\kappa^\kappa$.
    \end{itemize}
\end{itemize}

Let us show that $T$ has no $\kappa$-branch. Assume, for sake of contradiction, that $T$ has a $\kappa$-branch. By the construction of $T$, there is $(\eta_0,\eta_1,\eta_2)\in \kappa^\kappa$ such that for all $\beta<\kappa$, $(\eta_0\restriction\beta,\eta_1\restriction\beta,\eta_2\restriction\beta)\in T$. Thus, for all $\beta<\kappa$, $N_{(\eta_0\restriction\beta,\eta_1\restriction\beta)}\cap Z\neq\emptyset$ and $N_{(\eta_0\restriction\beta,\eta_2\restriction\beta)}\cap Y\neq\emptyset$. Since $Z$ and $Y$ are $\kappa$-closed, $(\eta_0,\eta_1)\in Z$ and $(\eta_0,\eta_2)\in Y$. So $\eta_0\in Y\cap Z$, a contradiction. 

Notice that $T$ is closed under coordinate wise increasing unions, so every increasing sequence in $T$ has a supremum. Since $|T|\leq\kappa^{<\kappa}$, $(T,L)$ is a good labelled $(\kappa,\kappa^{<\kappa})$-tree.

Let us show that for all $\xi\in \kappa^\kappa$, $\xi\in X$ if and only if $II\uparrow GB(\xi,(T,L))$. For all $\eta\in\kappa^\kappa$, let $\pi(\eta)$ be the strategy in which II chooses initial segments of $\eta$, i.e. if the game is at $(\eta'_0,\eta'_1,\eta'_2)$ such that $L((\eta'_0,\eta'_1,\eta'_2))=\cup$, II chooses the unique triple $(\eta_0,\eta_1,\eta_2)$ that satisfies $\dom(\eta_0)=\dom(\eta_1)=\dom(\eta_2)$, $\eta'_0=\eta_0$, $\eta'_1=\eta_1$, and $\eta'_2\subseteq\eta_2\subseteq \eta$ (the same idea applies if the game is at $(\eta'_0,\eta'_2,\alpha)$).

Let $\xi\in \kappa^\kappa$ be such that $\xi\in X$. So, there is $\eta\in \kappa^\kappa$ such that $(\xi,\eta)\in Y$. Suppose that II plays following $\pi(\eta)$ and the game ends at the leaf $(x_0,x_1,x_2)$. 
\begin{itemize}
    \item Case $(x_0,x_1,x_2)=(\eta_0,\eta_1,\eta_2)\in (\kappa^{<\kappa})^3$: If $\eta_0\not\subseteq\xi$, then $\xi\in \kappa^\kappa\backslash N_{\eta_0}$. Thus $\xi\in L((\eta_0,\eta_1,\eta_2))$ and it doesn't matter whether $N_{(\eta_0,\eta_1)}\cap Z=\emptyset$ or not. On the other hand, by the definition of $\pi(\eta)$, $\eta_2\subseteq \eta$. Thus, if $\eta_0\subseteq\xi$, then $N_{(\eta_0',\eta_2')}\cap Y\neq\emptyset$ and $N_{(\eta_0',\eta_1')}\cap Z=\emptyset$. So $\xi\in L((\eta_0,\eta_1,\eta_2))$.
    \item Case $(x_0,x_1,x_2)=(\eta_0,\eta_2,\alpha)\in (\kappa^{<\kappa})^2\times\kappa$: If $N_{(\eta_0,\eta_2)}\cap Y\neq\emptyset$, then $\xi\in \kappa^\kappa=L((\eta_0,\eta_2,\alpha))$. On the other hand, by the definition of $\pi(\eta)$, $\eta_2\subseteq \eta$. Thus, if $N_{(\eta_0,\eta_2)}\cap Y=\emptyset$, then $\eta_0\not\subseteq\xi$. So $\xi\in \kappa^\kappa\backslash N_{\eta_0}=L((\eta_0,\eta_1,\eta_2))$.
\end{itemize}
We conclude that $\pi(\eta)$ is a winning strategy of II in $GB(\xi,(T,L))$.

For all $\xi,\eta\in(\kappa^\kappa)^2$, let $\sigma(\xi,\eta)$ be the strategy in which I chooses initial segments of $(\xi,\eta)$. If $\xi\notin X$, then there is $\eta\in \kappa^\kappa$ such that $(\xi,\eta)\in Z$. Clearly $\sigma(\xi,\eta)$ is a winning strategy of I in $GB(\xi,(T,L))$.

We conclude that $$\xi\in X\leftrightarrow II\uparrow GB(\xi,(T,L)).$$ 
We will finish the prove by showing that $$W=\{(\xi,\eta) : \pi(\eta)\mbox{ is a winning strategy of II in }GB(\xi,(T,L))\}$$ is $\kappa$-Borel. 
From our previous argument, if $(\xi,\eta)\in Y$, then $(\xi,\eta)\in W$. Let $(\xi,\eta)\notin Y$, then there is $\alpha<\kappa$ such that $(N_{\xi\restriction \alpha}\times N_{\eta\restriction \alpha})\cap Y=\emptyset$. If I starts by choosing first $(\emptyset,\emptyset,\alpha)$, then $(\xi\restriction 1,\emptyset,\alpha)$, and then, in each turns chooses an initial segment of $\xi$, then II will lose if II follows $\pi(\eta)$. Thus $\pi(\eta)$ is not a winning strategy. So $(\xi,\eta)\notin Y$ implies $\pi(\eta)\notin W$. Therefore $Y=W$. Since $Y$ is $\kappa$-closed, $W$ is $\kappa$-Borel.

\end{cproof}

Let $X$ be a subclass of $\kappa$-$\Sigma^1_1$. We say that a set $U\subseteq\kappa^\kappa\times\kappa^\kappa$ is \emph{universal for} $X$ if for all $A\in X$, there is $\xi\in \kappa^\kappa$ such that for all $\eta\in\kappa^\kappa$, $\eta\in A$ if and only if $(\eta,\xi)\in U$.

\begin{lemma}
    For all $\alpha<\kappa^+$ there is $R_\alpha\subseteq \kappa^\kappa\times\kappa^\kappa$ which is a $\kappa$-Borel set and universal for $\kappa\text{-}\Sigma_{\alpha}$.
\end{lemma}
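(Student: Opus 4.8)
The plan is to construct $R_\alpha$ by recursion on $\alpha$, at each stage building a universal set for $\kappa\text{-}\Sigma_\alpha$ out of a universal set for the previous level, using the fact that there are only $\kappa$ many ``codes'' of bounded functions to index basic open sets and that $\kappa$-Borel is closed under $\kappa$-unions, $\kappa$-intersections and preimages under $\kappa$-continuous maps (Fact~\ref{funcion_borel}). First I would fix, once and for all, a bijection between $\kappa$ and the set $\{\eta : \eta\colon X\to\kappa,\ X\subseteq\kappa,\ |X|<\kappa\}$ of codes of basic $\kappa$-open sets — this is legitimate since that set has size $\kappa^{<\kappa}$, but I only need to enumerate a cofinal-enough subfamily; more precisely I enumerate the basic $\kappa$-open sets of the form $N_{\{(i,j)\}}$ (there are exactly $\kappa$ of them) to start the recursion at $\kappa\text{-}\Sigma_0$, where I set $R_0 = \{(\eta,\xi) : \eta \in N_{\{(\xi(0),\xi(1))\}}\}$ (reading the first two coordinates of the parameter $\xi$ as the pair $(i,j)$); this $R_0$ is clearly $\kappa$-open, hence $\kappa$-Borel, and universal for $\kappa\text{-}\Sigma_0$.

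For the successor step, suppose $R_\alpha$ is $\kappa$-Borel and universal for $\kappa\text{-}\Sigma_\alpha$, and say $\alpha$ is even, so $\kappa\text{-}\Sigma_{\alpha+1}$ consists of $\kappa$-unions of $\kappa\text{-}\Sigma_\alpha$ sets (the odd case is identical with intersection in place of union). A parameter for a $\kappa\text{-}\Sigma_{\alpha+1}$ set should code a $\kappa$-sequence $\langle \xi_\gamma : \gamma<\kappa\rangle$ of $\kappa\text{-}\Sigma_\alpha$-parameters. I would fix a $\kappa$-continuous ``unpairing'' map $\xi \mapsto \langle (\xi)_\gamma : \gamma<\kappa\rangle$ on $\kappa^\kappa$ — e.g. using a bijection $\kappa\times\kappa\to\kappa$ to split coordinates — and define
$$R_{\alpha+1} = \{(\eta,\xi) : \exists \gamma<\kappa\ ((\eta,(\xi)_\gamma)\in R_\alpha)\} = \bigcup_{\gamma<\kappa} g_\gamma^{-1}[R_\alpha],$$
where $g_\gamma\colon (\eta,\xi)\mapsto (\eta,(\xi)_\gamma)$ is $\kappa$-continuous. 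By Fact~\ref{funcion_borel} each $g_\gamma^{-1}[R_\alpha]$ is $\kappa$-Borel, so $R_{\alpha+1}$, being a $\kappa$-union of these, is $\kappa$-Borel; and universality is immediate since every $\kappa\text{-}\Sigma_{\alpha+1}$ set $A = \bigcup_{\gamma<\kappa} A_\gamma$ with $A_\gamma = \{\eta : (\eta,\xi_\gamma)\in R_\alpha\}$ equals $\{\eta : (\eta,\xi)\in R_{\alpha+1}\}$ for any $\xi$ with $(\xi)_\gamma = \xi_\gamma$. For $\alpha$ limit, $\kappa\text{-}\Sigma_\alpha = \bigcup_{\beta<\alpha}\kappa\text{-}\Sigma_\beta$, and since $\alpha<\kappa^+$ we have $|\alpha|\le\kappa$; I would fix a surjection $e\colon\kappa\to\alpha$ and, using the unpairing map again, set $R_\alpha = \bigcup_{\gamma<\kappa} g_\gamma^{-1}[R_{e(\gamma)}]$, i.e.\ a parameter codes a $\kappa$-sequence of parameters, the $\gamma$-th of which is interpreted at level $e(\gamma)$; a $\kappa$-union of $\kappa$-Borel sets is $\kappa$-Borel, and universality follows because any $A\in\kappa\text{-}\Sigma_\alpha$ already lies in some $\kappa\text{-}\Sigma_\beta$ with $\beta<\alpha$, hence in $\kappa\text{-}\Sigma_{e(\gamma)}$ for some $\gamma$ (choosing $\gamma$ with $e(\gamma)\ge\beta$ and using that the hierarchy is increasing, which holds since a single set is a trivial $\kappa$-union/intersection of copies of itself).

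The main obstacle is bookkeeping rather than any deep difficulty: I must make sure the unpairing maps are genuinely $\kappa$-continuous (so Fact~\ref{funcion_borel} applies) and that at the limit stage the hierarchy $\kappa\text{-}\Sigma_\beta$ is non-decreasing in $\beta$ so that reinterpreting a level-$\beta$ set as a level-$e(\gamma)$ set for $e(\gamma)\ge\beta$ is legitimate — this monotonicity should be checked directly from Definition~\ref{Borel_hier2} (one needs, e.g., that a $\kappa\text{-}\Sigma_\alpha$ set is both a trivial $\kappa$-union and a trivial $\kappa$-intersection of itself, so it sits in $\kappa\text{-}\Sigma_{\alpha+1}$ regardless of parity, hence in every later level). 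One subtlety worth a remark: at the base level I only made $R_0$ universal for the sets $N_{\{(i,j)\}}$, which is exactly what Definition~\ref{Borel_hier2} takes as $\kappa\text{-}\Sigma_0$, so no more is needed. Everything else is a routine induction, and the recursion terminates because $\kappa\text{-}\mathrm{Borel} = \bigcup_{\alpha<\kappa^+}\kappa\text{-}\Sigma_\alpha$.
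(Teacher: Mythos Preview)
Your approach is correct in spirit and is actually more elementary than the paper's: the paper routes the entire construction through the $\kappa$-\borelstar\ machinery (building a labelled tree $(T_{\alpha+1},L_{\alpha+1})$ from $(T_\alpha,L_\alpha)$ and then invoking Lemma~\ref{Borel_is_Borel*} to recover $\kappa$-Borelness), whereas you work directly with the closure properties of $\kappa$-Borel sets and Fact~\ref{funcion_borel}. Your successor step is fine and is in effect the same idea as the paper's, just stripped of the game-theoretic packaging.

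There is, however, a genuine gap in your limit step. With your definition $R_\alpha = \bigcup_{\gamma<\kappa} g_\gamma^{-1}[R_{e(\gamma)}]$, the section of $R_\alpha$ at a parameter $\xi$ is the union over \emph{all} $\gamma<\kappa$ of the section of $R_{e(\gamma)}$ at $(\xi)_\gamma$. Your universality argument only explains how to choose the slice $(\xi)_\gamma$ for \emph{one} $\gamma$ with $e(\gamma)\ge\beta$; you say nothing about the remaining slices, and those can pollute the section. Concretely, take $\alpha=\omega$ and $A=\emptyset\in\kappa\text{-}\Sigma_1\subseteq\kappa\text{-}\Sigma_\omega$. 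Since $e$ is onto $\omega$, some $\gamma$ has $e(\gamma)=0$, and the section of your $R_0$ at $(\xi)_\gamma$ is always $N_{\{((\xi)_\gamma(0),(\xi)_\gamma(1))\}}\neq\emptyset$; hence no $\xi$ gives section $\emptyset$, and your $R_\omega$ is not universal for $\kappa\text{-}\Sigma_\omega$.

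The fix is cheap. Either (i) arrange at the base that $\emptyset$ is a section of $R_0$ (e.g.\ set $R_0=\{(\eta,\xi):\xi(2)=0\text{ and }\eta(\xi(0))=\xi(1)\}$, still $\kappa$-open), so that by induction every $R_\beta$ has an ``empty'' code which you can place in all the unwanted slices; or (ii) at limits, use one coordinate of $\xi$ to \emph{select} the level rather than taking a union over all slices: put
\[
R_\alpha=\bigcup_{\gamma<\kappa}\Bigl(\bigl\{(\eta,\xi):\xi(0)=\gamma\bigr\}\cap h^{-1}[R_{e(\gamma)}]\Bigr),
\]
where $h(\eta,\xi)=(\eta,\mathrm{shift}(\xi))$ is $\kappa$-continuous. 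Then the section at $\xi$ is exactly the section of $R_{e(\xi(0))}$ at $\mathrm{shift}(\xi)$, and universality is immediate. (Incidentally, the paper's limit case is sketched rather briefly and glosses over an analogous point; your direct construction makes the issue more visible.) With either patch the rest of your argument goes through, and Fact~\ref{funcion_borel} transfers to the product space via the obvious $\kappa$-homeomorphism $\kappa^\kappa\times\kappa^\kappa\cong\kappa^\kappa$.
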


\begin{cproof}
    We will proceed by induction over $\alpha$. 

    {\bf $\alpha=0$:} Let $Z$ be the set of tuples $(\{(i,j)\},\eta)$ where $i,j\in \kappa$, $\dom(\eta)=i+1$, for all $k<i$, $\eta(k)=0$, and $\eta(i)=j+1$. Let $R_0=\bigcup_{z\in Z}N_z$, it is clear that $R_0$ is $\kappa$-open and universal for $\kappa\text{-}\Sigma_{0}$

    {\bf $\alpha+1$:} Let us show the case where $\alpha$ is even, the odd case is similar. For all $t\in \kappa^{<\omega}$ different from $\emptyset$, let $p(t)$ be such that $\dom(p(t))=\dom(t)-1$ and $p(t)(n)=t(n+1)$. 

    Let $\alpha$ be even such that $R_\alpha$ exists and $(T_\alpha,L_\alpha)$ is the $\kappa$-\borelstar-code for $R_\alpha$, where $T\subset \kappa^{<\omega}$. Let $\Pi:\kappa\times\kappa\rightarrow\kappa$ be a bijection such that if $\beta<\beta'$, then $\Pi(\gamma,\beta)<\Pi(\gamma,\beta')$ for all $\gamma$. 
    
    For all $\gamma<\kappa$, $X\subseteq \kappa$, $|X|\leq\kappa$, and $\eta:X\rightarrow\kappa$, let $p_\gamma(\eta)=\xi$ be such that $\dom(\xi)=\{\beta<\kappa\mid \Pi(\gamma,\beta)\in \dom (\eta)\}$, and for all $\beta\in \dom (\xi)$, $\xi(\beta)=\eta(\Pi(\gamma,\beta))$.

    For all $\gamma<\kappa$, $X\subseteq \kappa$, $|X|\leq\kappa$, and $\eta:X\rightarrow\kappa$, let $p^*_\gamma(\eta)=\xi^*$ be such that $\dom(\xi^*)=\{\Pi(\gamma,\beta)\mid \beta\in \dom (\eta)\}$, and for all $\Pi(\gamma,\beta)\in \dom (\xi^*)$, $\xi^*(\Pi(\gamma,\beta))=\eta(\beta)$.
    
    Let $T_{\alpha+1}$ is the set of all $t\in \kappa^{<\omega}$ such that $p(t)\in T_\alpha$. Let $L_{\alpha+1}$ be such that $L_{\alpha+1}(\emptyset)=\cup$; if $t\neq\emptyset$ and it is not a leaf, then $L_{\alpha+1}(t)=L_\alpha(p(t))$; and if $t$ is a leaf and $L_\alpha(p(t))=N_{(\eta,\xi)}$, then $L_{\alpha+1}(t)=N_{(\eta,p^*_\gamma(\xi))}$, where $\gamma=t(0)$. By Lemma \ref{labelled_to_Borel} it is easy to see that $(T_{\alpha+1},L_{\alpha+1})$ is a $\kappa$-\borelstar-code.
    
    Let $R_{\alpha+1}$ be the set whose $\kappa$-\borelstar-code is $(T_{\alpha+1},L_{\alpha+1})$. Then $R_{\alpha+1}$ is $\kappa$-Borel by Lemma \ref{Borel_is_Borel*}.
    \begin{claim}
        $R_{\alpha+1}$ is universal for $\kappa\text{-}\Sigma_{\alpha+1}$.
    \end{claim}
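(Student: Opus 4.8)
\begin{cproof}
The plan is to unwind the definitions: given a set $B\in\kappa\text{-}\Sigma_{\alpha+1}$, I would build a single code $\zeta\in\kappa^\kappa$ by amalgamating codes for the pieces of $B$, and then verify that the game $GB((\eta,\zeta),(T_{\alpha+1},L_{\alpha+1}))$ splits, after II's first move, into a game of the form $GB((\eta,\xi_\gamma),(T_\alpha,L_\alpha))$.

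First, since $\kappa\text{-}\Sigma_{\alpha+1}$ consists of $\kappa$-unions of $\kappa\text{-}\Sigma_\alpha$ sets, I would write $B=\bigcup_{\gamma<\kappa}A_\gamma$ with each $A_\gamma\in\kappa\text{-}\Sigma_\alpha$, and by the induction hypothesis pick for each $\gamma<\kappa$ a code $\xi_\gamma\in\kappa^\kappa$ with $\eta\in A_\gamma\Leftrightarrow(\eta,\xi_\gamma)\in R_\alpha$ for all $\eta\in\kappa^\kappa$. Then set $\zeta=\bigcup_{\gamma<\kappa}p^*_\gamma(\xi_\gamma)$. Because $\Pi$ is a bijection, the ``columns'' $\{\Pi(\gamma,\beta):\beta<\kappa\}$ ($\gamma<\kappa$) partition $\kappa$, so the functions $p^*_\gamma(\xi_\gamma)$ have pairwise disjoint domains with union $\kappa$; hence $\zeta$ is a well-defined element of $\kappa^\kappa$ with $\zeta(\Pi(\gamma,\beta))=\xi_\gamma(\beta)$ for all $\gamma,\beta<\kappa$.

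Next comes the game analysis. In $GB((\eta,\zeta),(T_{\alpha+1},L_{\alpha+1}))$ the root $\emptyset$ carries the label $\cup$, so II's first move is some $\langle\gamma\rangle$, $\gamma<\kappa$. The map $s\mapsto\langle\gamma\rangle{}^\frown s$ is an order isomorphism from $T_\alpha$ onto $\{t\in T_{\alpha+1}:\langle\gamma\rangle\subseteq t\}$ (immediate from $t\in T_{\alpha+1}\Leftrightarrow p(t)\in T_\alpha$, $p(\langle\gamma\rangle{}^\frown s)=s$, the root of $T_\alpha$ being $\emptyset$). Under this isomorphism the interior labels agree, $L_{\alpha+1}(\langle\gamma\rangle{}^\frown s)=L_\alpha(p(\langle\gamma\rangle{}^\frown s))=L_\alpha(s)$, while a leaf $s$ of $T_\alpha$ with $L_\alpha(s)=N_{(\eta_0,\xi_0)}$ is relabelled $N_{(\eta_0,p^*_\gamma(\xi_0))}$. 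The identity I would then establish is
$$(\eta,\zeta)\in N_{(\eta_0,p^*_\gamma(\xi_0))}\ \Longleftrightarrow\ \eta_0\subseteq\eta\ \text{and}\ \xi_0\subseteq\xi_\gamma\ \Longleftrightarrow\ (\eta,\xi_\gamma)\in N_{(\eta_0,\xi_0)},$$
the first equivalence holding because $p^*_\gamma(\xi_0)\subseteq\zeta$ iff $\xi_0(\beta)=\zeta(\Pi(\gamma,\beta))=\xi_\gamma(\beta)$ for every $\beta\in\dom(\xi_0)$. Consequently, once II has moved to $\langle\gamma\rangle$, the remaining play of $GB((\eta,\zeta),(T_{\alpha+1},L_{\alpha+1}))$ is, move for move and with the same winner declared at every leaf, the game $GB((\eta,\xi_\gamma),(T_\alpha,L_\alpha))$; hence $II\uparrow GB((\eta,\zeta),(T_{\alpha+1},L_{\alpha+1}))$ iff $II\uparrow GB((\eta,\xi_\gamma),(T_\alpha,L_\alpha))$ for some $\gamma<\kappa$.

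Combining these, for every $\eta\in\kappa^\kappa$:
\begin{align*}
\eta\in B &\iff \exists\gamma\,(\eta\in A_\gamma)\iff\exists\gamma\,\bigl((\eta,\xi_\gamma)\in R_\alpha\bigr)\\
&\iff \exists\gamma\,\bigl(II\uparrow GB((\eta,\xi_\gamma),(T_\alpha,L_\alpha))\bigr)\\
&\iff II\uparrow GB((\eta,\zeta),(T_{\alpha+1},L_{\alpha+1}))\iff (\eta,\zeta)\in R_{\alpha+1},
\end{align*}
the last step by the definition of $R_{\alpha+1}$ as the set coded by $(T_{\alpha+1},L_{\alpha+1})$. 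Thus $\zeta$ witnesses $B$ for $R_{\alpha+1}$, so $R_{\alpha+1}$ is universal for $\kappa\text{-}\Sigma_{\alpha+1}$; the odd successor case is identical with $\cap$ in place of $\cup$ at the root. The one genuinely delicate point is the bookkeeping with $\Pi$ and $p^*_\gamma$: one must check that the leaf-label twist built into $L_{\alpha+1}$ matches the amalgamated code $\zeta$ exactly. Granting that identity, the rest is a routine traversal of the game tree.
\end{cproof}
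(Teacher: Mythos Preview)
Your proof is correct and follows essentially the same approach as the paper's: build the code $\zeta$ by amalgamating the $\xi_\gamma$ along the columns of $\Pi$ (the paper phrases this as choosing $\xi$ with $p_\gamma(\xi)=\xi_\gamma$, which is the same thing), and then analyze the game $GB((\eta,\zeta),(T_{\alpha+1},L_{\alpha+1}))$ by splitting on II's first move. Your presentation is in fact a bit cleaner, since you make explicit the order isomorphism $s\mapsto\langle\gamma\rangle{}^\frown s$ from $T_\alpha$ onto the subtree above $\langle\gamma\rangle$ and verify the leaf-label identity $(\eta,\zeta)\in N_{(\eta_0,p^*_\gamma(\xi_0))}\Leftrightarrow(\eta,\xi_\gamma)\in N_{(\eta_0,\xi_0)}$ directly, whereas the paper argues the two directions of the equivalence separately.
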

    \begin{proof}
        Let $A\in \kappa\text{-}\Sigma_{\alpha+1}$ and $A=\bigcup_{i<\kappa}A_i$ where for all $i<\kappa$, $A_i\in \kappa\text{-}\Sigma_\alpha$. By the induction hypothesis, for all $i<\kappa$, there is $\xi_i$ such that $\eta\in A_i$ if and only if $(\eta,\xi_i)\in R_\alpha$. It is easy to see that there is $\xi$ such that for all $i<\kappa$, $p_i(\xi)=\xi_i$. From the definition of $p_i$ and $p^*_i$, $\xi$ exists. Let us show that for all $\eta\in \kappa^\kappa$, $\eta\in A$ if and only if $(\eta,\xi)\in R_{\alpha+1}$. From the way $R_{\alpha+1}$ was constructed, $(\eta,\xi)\in R_{\alpha+1}$ if and only if $II\uparrow GB((\eta,\xi),(T_{\alpha+1},L_{\alpha+1}))$. Thus it is enough to show that for all $\eta\in \kappa^\kappa$, $\eta\in A$ if and only if $II\uparrow GB((\eta,\xi),(T_{\alpha+1},L_{\alpha+1}))$.

        Let $\eta$ be such that $\eta\in A$. Therefore, there is $i<\kappa$ such that $\eta\in A_i$ and $(\eta,\xi_i)\in R_\alpha$. Thus $II\uparrow GB((\eta,\xi_i),(T_{\alpha},L_{\alpha}))$, let $\sigma$ be a winning strategy of II in $GB((\eta,\xi_i),(T_{\alpha},L_{\alpha}))$. Let II plays by the following strategy in $GB((\eta,\xi),(T_{\alpha+1},L_{\alpha+1}))$: II chooses $(0,i)$ in the first move (since $L_{\alpha+1}(\emptyset)=\cup$) and afterwards, II plays by the strategy $\sigma$. By following this strategy, II makes sure that the game ends in a leaf $t$ such that $i=t(0)$, $L_{\alpha+1}(t)=N_{(\zeta_1,p^*_i(\zeta_2))}$, where $L_\alpha(p(t))=N_{(\zeta_1,\zeta_2)}$, and $(\eta,\xi_i)\in N_{(\zeta_1,\zeta_2)}$. By the way $\xi$ was defined, $(\eta,\xi)\in N_{(\zeta_1,p^*_i(\zeta_2))}$.

        Let $\eta$ be such that $II\uparrow GB((\eta,\xi),(T_{\alpha+1},L_{\alpha+1}))$. Let $\sigma$ be a winning strategy of II in $GB((\eta,\xi),(T_{\alpha+1},L_{\alpha+1}))$. Since $L_{\alpha+1}(\emptyset)=\cup$, there is a unique $\gamma<\kappa$ such that if II plays following $\sigma$, then the game ends in a leaf $t$ such that $\gamma=t(0)$, $L_{\alpha+1}(t)=N_{(\zeta_1,p^*_\gamma(\zeta_2))}$, and $(\eta,\xi)\in N_{(\zeta_1,p^*_\gamma(\zeta_2))}$. By the way $\xi$ and $L_{\alpha+1}$ were defined, $L_\alpha(p(t))=N_{(\zeta_1,\zeta_2)}$, and $(\eta,\xi_\gamma)\in N_{(\zeta_1,\zeta_2)}$. Thus $\sigma$ induces a winning strategy of II in $GB((\eta,\xi_\gamma),(T_{\alpha},L_{\alpha}))$. We conclude that $(\eta,\xi_\gamma)\in R_\alpha$ and $\eta\in A_\gamma$. Thus $\eta\in A$.
    \end{proof}

    {\bf $\alpha$ limit:} Let $\alpha$ be such that for all $\beta<\alpha$, such $R_\beta$ exists. For all $\beta<\alpha$, let $(T_\beta,L_\beta)$ be the $\kappa$-\borelstar-code for $R_\beta$, such that $T_\beta$ is a subtree of $\kappa^{<\omega}$.

    Let $T_\alpha$ be the tree of those $t\in \kappa^{<\omega}$ such that $t=\emptyset$ or $t(0)<\alpha$, and $p(t)\in T_{t(0)}$. Let $L_\alpha$ be such that $L_\alpha(\emptyset)=\cup$ and for $t\neq\emptyset$, $L_\alpha(t)=L_{t(0)}(p(t))$. It is easy to see that $(T_{\alpha},L_{\alpha})$ is a $\kappa$-\borelstar-code.
    Let $R_\alpha$ be the set whose $\kappa$-\borelstar-code is $(T_\alpha,L_\alpha)$.
    In a similar way as in the previous claim, it is possible to prove that $R_\alpha$ is universal for $\kappa$-$\Sigma_\alpha$.

\end{cproof}

As to a counterpart of the above lemma in the bounded topology, we refer to the following fact: Assuming $2^\kappa<2^{(2^{<\kappa})}$, there is no universal set for $\kappa\text{-}\Sigma_{\alpha}$ in the space $2^\kappa$, for any $\alpha<\kappa^\kappa$, as was proved in 
 \cite[Corollary 4.16]{MR3093397}.

\begin{cor}\label{proper_hierarchy}
    The sets $\kappa$-$\Sigma_\alpha$ form a proper hierarchy.
\end{cor}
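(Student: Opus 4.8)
The plan is to diagonalize against the universal sets furnished by the previous lemma, and then bootstrap from the resulting fact that no single level exhausts $\kappa$-Borel to strict inclusions at every step. First observe the trivial half: any set is a one‑term (hence $\kappa$‑term) $\kappa$‑union and $\kappa$‑intersection of itself, so $\kappa$-$\Sigma_\beta\subseteq\kappa$-$\Sigma_{\beta'}$ whenever $\beta\le\beta'$, and $\kappa$-Borel $=\bigcup_{\alpha<\kappa^+}\kappa$-$\Sigma_\alpha$. Hence it suffices to prove $\kappa$-$\Sigma_\alpha\subsetneq\kappa$-$\Sigma_{\alpha+1}$ for every $\alpha<\kappa^+$; strictness between arbitrary levels then follows.

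For the diagonalization, fix $\alpha$ and let $R_\alpha\subseteq\kappa^\kappa\times\kappa^\kappa$ be the $\kappa$-Borel set universal for $\kappa$-$\Sigma_\alpha$. The map $d\colon\kappa^\kappa\to\kappa^\kappa\times\kappa^\kappa$, $d(\eta)=(\eta,\eta)$, is $\kappa$-continuous, since the preimage of a basic $\kappa$-open set $N_\sigma\times N_\tau$ is $N_{\sigma\cup\tau}$ (or $\emptyset$ when $\sigma,\tau$ are incompatible); so by Fact~\ref{funcion_borel} together with closure of $\kappa$-Borel under complements, $D_\alpha:=\{\eta:(\eta,\eta)\notin R_\alpha\}=\kappa^\kappa\setminus d^{-1}[R_\alpha]$ is $\kappa$-Borel. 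If $D_\alpha$ were in $\kappa$-$\Sigma_\alpha$, universality would give $\xi$ with $\eta\in D_\alpha\iff(\eta,\xi)\in R_\alpha$ for all $\eta$; putting $\eta=\xi$ yields $\xi\in D_\alpha\iff(\xi,\xi)\in R_\alpha\iff\xi\notin D_\alpha$, a contradiction. Hence $D_\alpha\notin\kappa$-$\Sigma_\alpha$, and since $D_\alpha$ is $\kappa$-Borel we get $\kappa$-$\Sigma_\alpha\subsetneq\kappa$-Borel for every $\alpha<\kappa^+$.

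Now I would prove $\kappa$-$\Sigma_\alpha\subsetneq\kappa$-$\Sigma_{\alpha+1}$ by induction on $\alpha$. The case $\alpha=0$ is immediate (e.g. $\kappa^\kappa=\bigcup_{j<\kappa}N_{\{(0,j)\}}\in\kappa$-$\Sigma_1$ is not of the form $N_{\{(i,j)\}}$). Suppose $\alpha=\beta+1$ and, toward a contradiction, $\kappa$-$\Sigma_\alpha=\kappa$-$\Sigma_{\alpha+1}$. According to the parity of $\beta$, $\kappa$-$\Sigma_\alpha$ is either the class of $\kappa$-unions or the class of $\kappa$-intersections of $\kappa$-$\Sigma_\beta$-sets, while $\kappa$-$\Sigma_{\alpha+1}$ is the other of the two applied to $\kappa$-$\Sigma_\alpha$. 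Since $\kappa\cdot\kappa=\kappa$, iterating the operation defining $\kappa$-$\Sigma_\alpha$ does not leave $\kappa$-$\Sigma_\alpha$, so $\kappa$-$\Sigma_\alpha$ is closed under that operation, and by the assumed equality it is closed under the other one as well; thus $\kappa$-$\Sigma_\alpha$ is closed under both $\kappa$-unions and $\kappa$-intersections. An easy induction then gives $\kappa$-$\Sigma_\gamma=\kappa$-$\Sigma_\alpha$ for all $\gamma\ge\alpha$ (limit $\gamma$ using $\kappa$-$\Sigma_\gamma=\bigcup_{\delta<\gamma}\kappa$-$\Sigma_\delta$), so $\kappa$-Borel $=\kappa$-$\Sigma_\alpha$, contradicting the previous paragraph.

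For $\alpha$ a limit, the induction hypothesis supplies, for cofinally many $\beta<\alpha$, a set $A_\beta\in\kappa$-$\Sigma_{\beta+1}\setminus\kappa$-$\Sigma_\beta$. Fix a cofinal $\langle\beta_i:i<\cf(\alpha)\rangle$ in $\alpha$, pairwise disjoint basic $\kappa$-open (in particular clopen) sets $C_i$, $\kappa$-homeomorphisms $h_i$ of $\kappa^\kappa$ onto $C_i$, and set $A=\bigcup_i h_i[A_{\beta_i}]$. Each $h_i[A_{\beta_i}]$ lies in some $\kappa$-$\Sigma_{\delta}$ with $\delta<\alpha$ (transporting along $h_i$ raises the level only by a bounded amount), hence $A$ is a $\kappa$-union of $\kappa$-$\Sigma_\alpha$-sets and so $A\in\kappa$-$\Sigma_{\alpha+1}$; but $A\notin\kappa$-$\Sigma_\alpha$, for if $A\in\kappa$-$\Sigma_\gamma$ with $\gamma<\alpha$ then $A_{\beta_i}=h_i^{-1}[A\cap C_i]$ would lie in $\kappa$-$\Sigma_{\gamma'}$ for a fixed $\gamma'<\alpha$ not depending on $i$ (intersecting with the clopen $C_i$ and pulling back along the $\kappa$-continuous $h_i^{-1}$ again raising the level only by a bounded amount), contradicting $A_{\beta_i}\notin\kappa$-$\Sigma_{\beta_i}$ as soon as $\beta_i\ge\gamma'$. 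This completes the induction and hence the proof.

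The step I expect to be the main obstacle is the limit case: one must carefully control the finitely many extra levels picked up when intersecting a $\kappa$-$\Sigma_\gamma$ set with a basic $\kappa$-open set and when transporting along $\kappa$-homeomorphisms, so as to be sure that the witness $A$ sits exactly one level above $\alpha$ and not lower. An alternative route avoiding the whole bootstrap would be to read off from the explicit construction of $R_\alpha$ in the previous lemma that $R_\alpha$ may be taken to be of class $\kappa$-$\Sigma_\alpha$ for the product hierarchy; then the computation in the second paragraph places $D_\alpha$ directly in $\kappa$-$\Sigma_{\alpha+1}$, but this requires tracking the exact class of the \borelstar-codes built there.
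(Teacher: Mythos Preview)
Your second paragraph \emph{is} the paper's proof: the paper simply diagonalizes against $R_\alpha$ to obtain a $\kappa$-Borel set not in $\kappa$-$\Sigma_\alpha$, concludes that no single level exhausts $\kappa$-Borel, and takes this to be the content of ``proper hierarchy.'' So for the statement as the paper understands it, your proposal is correct and follows the same route.

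Your remaining paragraphs aim at a strictly stronger reading, namely $\kappa$-$\Sigma_\alpha\subsetneq\kappa$-$\Sigma_{\alpha+1}$ for every $\alpha$, which the paper does not attempt. Your successor step is sound: if two consecutive successor levels coincide, closure under both $\kappa$-unions and $\kappa$-intersections follows, the hierarchy stabilizes, and this contradicts the diagonalization. The limit step, however, is exactly where you locate the difficulty, and that difficulty is genuine. You would need to verify that basic $\kappa$-open sets $C_i$ admit $\kappa$-homeomorphisms $h_i$ from $\kappa^\kappa$ onto them, and, more delicately, that pushing forward along $h_i$, intersecting with a basic $\kappa$-open set, and pulling back along $h_i^{-1}$ each raise the $\Sigma$-level by at most a fixed finite amount independent of $i$; with the present hierarchy (which does not include complements level-by-level) these bookkeeping bounds are not automatic. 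Your alternative suggestion of tracking the exact class of $R_\alpha$ through the construction of the preceding lemma is the cleaner route to the stronger statement, but again the paper does not pursue it.
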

\begin{cproof}
    Assume, for sake of contradiction, that there is $\alpha<\kappa^+$ such that every $\kappa$-Borel set is a $\kappa$-$\Sigma_\alpha$ set. Let $A:=\{\eta\in \kappa^\kappa\mid (\eta,\eta)\notin R_\alpha\}$. Since $R_\alpha$ is $\kappa$-Borel, $(\kappa^\kappa\times\kappa^\kappa)\ \backslash R_\alpha$ is $\kappa$-Borel. Let $f:\kappa^\kappa\rightarrow\kappa^\kappa\times\kappa^\kappa$ be the function that maps $\eta$ into $(\eta,\eta)$. Notice that $f^{-1}[N_\eta\times N_\xi]$ is either the empty set or $N_{\eta\cup\xi}$, thus $f$ is $\kappa$-continuous. Clearly $A=f^{-1}[(\kappa^\kappa\times\kappa^\kappa)\ \backslash R_\alpha]$, so $A$ is $\kappa$-Borel. Thus $A$ is $\kappa$-$\Sigma_\alpha$. On the other hand, since $R_\alpha$ is universal for $\kappa$-$\Sigma_\alpha$, there is $\xi\in\kappa^\kappa$ such that $\eta\in A$ if and only if $(\eta,\xi)\in R_\alpha$. Thus $(\xi,\xi)\in R_\alpha$ holds if and only if $\xi\in A$. So $(\xi,\xi)\in R_\alpha$ if and only if $(\xi,\xi)\notin R_\alpha$, a contradiction. 
\end{cproof}

We define the $\kappa$-Borel rank of a $\kappa$-Borel set, $A$, as the least ordinal $\alpha$, such that $A\in \kappa\text{-}\Sigma_\alpha$. We denote the $\kappa$-Borel rank of $A$ by $rk_B(A)$.

\begin{thm}
    There is a $(\kappa,\kappa^+)$-Borel set $R\subseteq \kappa^\kappa\times\kappa^\kappa$ which is universal for $\kappa$-Borel sets ($\kappa$-Borel sets).
\end{thm}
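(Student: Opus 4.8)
The plan is to glue together the $\kappa^+$ many $\kappa$-Borel sets $R_\alpha$ ($\alpha<\kappa^+$) universal for $\kappa\text{-}\Sigma_\alpha$ produced above, storing each $R_\alpha$ at a private ``address'' inside the second coordinate. Since $\kappa$-Borel $=\bigcup_{\alpha<\kappa^+}\kappa\text{-}\Sigma_\alpha$, a correct gluing is automatically universal for all $\kappa$-Borel sets, and being a union of $\kappa^+$ many $\kappa$-Borel sets it is $(\kappa,\kappa^+)$-Borel.

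Concretely, first I would fix a partition $\kappa=A\sqcup B$ into two pieces of size $\kappa$ together with bijections $a\colon\kappa\to A$ and $b\colon\kappa\to B$, so that $\xi\mapsto(\xi\circ a,\xi\circ b)$ is a bijection of $\kappa^\kappa$ with $\kappa^\kappa\times\kappa^\kappa$ that is $\kappa$-continuous together with its inverse; write $\xi^0=\xi\circ a$ and $\xi^1=\xi\circ b$. Next, since $2^\kappa\ge\kappa^+$, I would fix pairwise distinct points $x_\alpha\in\kappa^\kappa$ for $\alpha<\kappa^+$. The key elementary observation is that each singleton $\{x_\alpha\}$ is $\kappa$-closed, since its complement equals $\bigcup_{\gamma<\kappa}\bigcup_{j\ne x_\alpha(\gamma)}N_{\{(\gamma,j)\}}$, a $\kappa$-union of basic $\kappa$-open sets; in particular $\{x_\alpha\}$ is $\kappa$-Borel. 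I would then set
$$R=\bigcup_{\alpha<\kappa^+}\bigl\{(\eta,\xi)\in\kappa^\kappa\times\kappa^\kappa : \xi^0=x_\alpha\ \text{and}\ (\eta,\xi^1)\in R_\alpha\bigr\}.$$

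To see that $R$ is $(\kappa,\kappa^+)$-Borel it suffices to see that each set in the union is $\kappa$-Borel: $\{(\eta,\xi):\xi^0=x_\alpha\}$ is the preimage of the $\kappa$-closed set $\{x_\alpha\}$ under the $\kappa$-continuous map $(\eta,\xi)\mapsto\xi^0$, and $\{(\eta,\xi):(\eta,\xi^1)\in R_\alpha\}$ is the preimage of the $\kappa$-Borel set $R_\alpha$ under the $\kappa$-continuous map $(\eta,\xi)\mapsto(\eta,\xi^1)$, so their intersection is $\kappa$-Borel by the evident product version of Fact~\ref{funcion_borel}; a union of $\kappa^+$ many $\kappa$-Borel sets is $(\kappa,\kappa^+)$-Borel. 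For universality, given a $\kappa$-Borel $A\subseteq\kappa^\kappa$ I would pick $\alpha<\kappa^+$ with $A\in\kappa\text{-}\Sigma_\alpha$, use the universality of $R_\alpha$ to obtain $\xi'$ with $\eta\in A\iff(\eta,\xi')\in R_\alpha$, and take the unique $\xi\in\kappa^\kappa$ with $\xi^0=x_\alpha$ and $\xi^1=\xi'$. Because the points $x_\beta$ are pairwise distinct, $(\eta,\xi)$ can belong to the $\beta$-th set of the union only for $\beta=\alpha$; hence $(\eta,\xi)\in R\iff(\eta,\xi^1)\in R_\alpha\iff\eta\in A$, as required.

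I do not expect a genuine obstacle here. The only thing that could go wrong is ``cross-talk'' between the $\kappa^+$ pieces of $R$, and this is ruled out precisely because the addresses $\{x_\alpha\}$ are pairwise disjoint; the only other point requiring care is that each piece must remain $\kappa$-Borel, so that the $\kappa^+$-union lands exactly in the class $(\kappa,\kappa^+)$-Borel, and this rests on the observation that singletons of $\kappa^\kappa$ are $\kappa$-closed. It is worth noting that $\alpha$ ranges over $\kappa^+$ and hence cannot be coded into a single point of $\kappa^\kappa$ by a $\kappa$-Borel decoding — this is exactly why one must spend a $\kappa^+$-sized union rather than trying to build a $\kappa$-Borel universal set, and why the bound $(\kappa,\kappa^+)$ is the natural one.
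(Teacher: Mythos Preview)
Your proposal is correct and follows essentially the same approach as the paper: both glue together the $\kappa$-Borel universal sets $R_\alpha$ for $\kappa\text{-}\Sigma_\alpha$ using $\kappa^+$ pairwise distinct ``addresses'' in $\kappa^\kappa$, observe that singletons are $\kappa$-closed so each piece stays $\kappa$-Borel, and take the $\kappa^+$-union. The only cosmetic difference is that the paper first builds the union inside the triple product $\kappa^\kappa\times\kappa^\kappa\times\kappa^\kappa$ and then compresses the last two coordinates, whereas you split the second coordinate at the outset via $\xi\mapsto(\xi^0,\xi^1)$; your packaging is arguably cleaner.
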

\begin{cproof}
    For all $\alpha<\kappa^+$, let $R_\alpha\subseteq \kappa^\kappa\times\kappa^\kappa$ be a $\kappa$-Borel that is universal for $\kappa\text{-}\Sigma_{\alpha}$. Let $\Pi:\kappa^+\rightarrow\kappa^\kappa$ be injective and let us denote $\Pi(\alpha)$ by $\xi_\alpha$. Let us define $R^*$ by $$R^*:=\bigcup_{\alpha<\kappa^+}R_\alpha\times\{\xi_\alpha\}\subseteq \kappa^\kappa\times\kappa^\kappa\times\kappa^\kappa.$$
    Since $$R_\alpha\times\{\xi_\alpha\}=(R_\alpha\times\kappa^\kappa)\cap (\kappa^\kappa\times\kappa^\kappa\times \{\xi_\alpha\}),$$ $R_\alpha\times\{\xi_\alpha\}$ is $\kappa$-Borel, thus it is a  $(\kappa,\kappa^+)$-Borel set. Therefore, $R^*$ is a $(\kappa,\kappa^+)$-Borel set.

    It is easy to see that there is $X\subset\kappa$ and $g:X\rightarrow\kappa$ a bijection, such that for all $\alpha<\kappa^+$ there is $\bar{R}_\alpha\subseteq \kappa^\kappa\times\kappa^\kappa$, such that for all $A\in \kappa$-$\Sigma_\alpha$, $\eta\in A$ if and only if there is $\xi$ such that $(\eta,\xi)\in \bar{R}_\alpha$ and $\xi\restriction X=\xi_\alpha\circ g$. Thus $R^*$ can be coded by a $(\kappa,\kappa^+)$-Borel set universal for $\kappa$-Borel sets.
\end{cproof}

\begin{cor}
    There is a $(\kappa,\kappa^+)$-Borel set that is not $\kappa$-Borel.
\end{cor}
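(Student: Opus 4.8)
The plan is to run the diagonal argument of Corollary~\ref{proper_hierarchy} one level higher, using the universal set produced by the preceding theorem. Let $R\subseteq\kappa^\kappa\times\kappa^\kappa$ be the $(\kappa,\kappa^+)$-Borel set which is universal for $\kappa$-Borel sets, and put
$$A=\{\eta\in\kappa^\kappa:(\eta,\eta)\notin R\}.$$
First I would verify that $A$ is $(\kappa,\kappa^+)$-Borel. The diagonal map $d\colon\kappa^\kappa\to\kappa^\kappa\times\kappa^\kappa$, $d(\eta)=(\eta,\eta)$, is $\kappa$-continuous: the preimage of a basic $\kappa$-open set $N_\zeta\times N_{\zeta'}$ is $N_{\zeta\cup\zeta'}$ when $\zeta$ and $\zeta'$ are compatible and $\emptyset$ otherwise. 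Since taking preimages commutes with complements, with $\lambda$-unions and with $\lambda$-intersections, the transfinite induction of Fact~\ref{funcion_borel}, carried out over the $(\kappa,\kappa^+)$-Borel rank instead of the $\kappa$-Borel rank, shows that $d^{-1}[B]$ is $(\kappa,\kappa^+)$-Borel whenever $B$ is. As $R$ is $(\kappa,\kappa^+)$-Borel, so is $\kappa^\kappa\times\kappa^\kappa\setminus R$, and $A=d^{-1}[\kappa^\kappa\times\kappa^\kappa\setminus R]$; hence $A$ is $(\kappa,\kappa^+)$-Borel.

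Next I would derive the contradiction exactly as in Corollary~\ref{proper_hierarchy}. Suppose, for contradiction, that $A$ is $\kappa$-Borel. Since $R$ is universal for $\kappa$-Borel sets, there is $\xi\in\kappa^\kappa$ with
$$\eta\in A\iff(\eta,\xi)\in R\qquad\text{for all }\eta\in\kappa^\kappa.$$
Substituting $\eta=\xi$ yields $\xi\in A\iff(\xi,\xi)\in R$, whereas by the definition of $A$ we have $\xi\in A\iff(\xi,\xi)\notin R$. This is absurd, so $A$ is not $\kappa$-Borel, and $A$ witnesses the corollary.

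The only delicate point — and the closest thing to an obstacle — is the claim in the first paragraph that the class of $(\kappa,\kappa^+)$-Borel sets is closed under $\kappa$-continuous preimages (in particular that $d^{-1}$ of a $(\kappa,\kappa^+)$-Borel set stays at the same level). This is a routine transfinite induction identical in shape to Fact~\ref{funcion_borel}, and needs no new idea; the rest is the standard Cantor-style diagonalization already employed in Corollary~\ref{proper_hierarchy}.
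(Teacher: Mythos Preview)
Your proposal is correct and follows precisely the route the paper intends: the paper's own proof consists of the single remark that one argues ``in a similar way as in Corollary~\ref{proper_hierarchy}'', i.e.\ by diagonalizing against the universal $(\kappa,\kappa^+)$-Borel set $R$ for $\kappa$-Borel sets. Your write-up is in fact more careful than the paper's, spelling out via the $\kappa$-continuity of the diagonal map and the obvious extension of Fact~\ref{funcion_borel} to $(\kappa,\kappa^+)$-Borel sets why the diagonal set $A$ lands in the right class.
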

\begin{cproof}
    It can be proved in a similar way as in Corollary \ref{proper_hierarchy}.
\end{cproof}

\section{Orbits of uncountable models: the non-structure case}

Having established, in Sections 2 and 3 above, the basic topological properties of the Generalized Baire Space $\kappa^\kappa$, in the absence of $\kappa^{<\kappa}=\kappa$, we turn in this section to applications in model theory.

Every structure of infinite cardinality $\kappa$ in a countable vocabulary can be canonically associated with an element of $\kappa^\kappa$, or alternatively an element of $2^\kappa$, leading to close connections between model theoretic properties of structures and topological properties of subsets of $\kappa^\kappa$. To make this translation completely explicit, we make the following definition: 

\begin{defn}\label{struct}
Let $L=\{Q_m\mid m\in\omega\}$ be a countable relational language.
Fix a bijection $\pi$ between $\kappa^{<\omega}$ and $\kappa$. For every $\eta\in \kappa^\kappa$ define the structure $M_{\eta}$ with domain $\kappa$ as follows:
For every tuple $(a_1,a_2,\ldots , a_n)$ in $\kappa^n$ we define $$\begin{array}{lcl}
(a_1,a_2,\ldots , a_n)\in Q_m^{M_{\eta}}&\Leftrightarrow& Q_m \text{ has arity } n \text{ and }\\
&&\eta(\pi(m,a_1,a_2,\ldots,a_n))>0.\end{array}$$
\end{defn}

Let $L$ and $\pi$ be as in Definition \ref{struct}. We say that a set $A\subseteq \kappa^\kappa$ is \emph{invariant under} $(L,\pi)$, if $\eta\in A$ and $M_\eta\cong M_{\eta'}$ imply $\eta'\in A$. Canonical examples of invariant sets are  orbits of models $M$ of size $\kappa$:
$$\Orb(M)=\{\eta\in \kappa^\kappa:M\cong M_\eta\}.$$

In the realm of countable structures, every orbit is Borel and a subset of $\omega^\omega$ is invariant Borel if and only it is the set of codes of countable models of a sentence of $L_{\omega_1\omega}$  (\cite{MR200133},
\cite{MR188059}). The purpose of this and the next section is to prove a similar result for  structures of cardinality $\kappa>\omega$. We show that the answer to this question depends very much on the stability theoretic properties of the first order theory of the structure. Previous work (e.g. \cite{MR3235820}) has focused on the case $\kappa^{<\kappa}=\kappa$. Here we do not make this assumption. 

In \cite{MR4409720} the relation between Borel sets and definability in $L_{\kappa^+,\kappa}$ was studied without the assumption $\kappa^{<\kappa}=\kappa$. The authors showed that in general Borel (and invariant) is not equivalent to definability in $L_{\kappa^+,\kappa}$, both directions may fail (see \cite[Remarks 8.12 and 8.17]{MR4409720}).

The following lemma can be found in more detail in \cite{MR4409720}. 
Notice that it seems as if the below lemma solves  the ``Open problem" at the end of subsection III.1 of \cite{MR3235820}. However, in the ``Open problem" our ``Borel" refers to Borel sets  in the bounded topology, which is different from $\kappa$-Borel of Lemma \ref{4.2}. The ``Open problem" was solved by Motto Ros in [\cite{MR3093397}, Corollary 4.5].

\begin{lemma}\cite[Proposition 8.13 (a)]{MR4409720}\label{4.2}
    Suppose $\kappa>\omega$ 
    and $L$ and $\pi$ are as in Definition \ref{struct}. If $A\subseteq \kappa^\kappa$ is $\kappa$-Borel and invariant under $(L,\pi)$, then there is an $L_{\kappa^+\kappa}$-sentence $\varphi$ such that $A=\{\eta\in \kappa^\kappa\mid M_\eta\models \varphi\}$.
\end{lemma}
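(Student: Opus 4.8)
The plan is to follow the classical Vaught argument, adapted to the $\kappa$-Borel hierarchy $\kappa\text{-}\Sigma_\alpha$ from Definition~\ref{Borel_hier2}. The key move is to prove by induction on $\alpha<\kappa^+$ a statement uniform enough to carry the induction: for every $\kappa\text{-}\Sigma_\alpha$ set $B$ and every finite tuple $\bar a$ of distinct elements of $\kappa$, there is an $L_{\kappa^+\kappa}$-formula $\varphi_{B,\bar a}(\bar x)$ (of length bounded by $\kappa$ and with $<\kappa$ free variables) such that for all $\eta$ and all tuples $\bar b$ from $M_\eta$ of the same length as $\bar a$, we have $M_\eta\models\varphi_{B,\bar a}[\bar b]$ if and only if there is an automorphism — or rather, using Vaught's trick, a suitable partial back-and-forth system extending to an isomorphism — $f$ of $M_\eta$ with $f(\bar a)=\bar b$ and such that the ``$\eta$-shifted by $f$'' point lies in $B$. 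More precisely, following \cite[Theorem~24]{MR3235820}, one works with the relation ``$\eta\in B$ up to moving $\bar a$ to $\bar b$'': one defines formulas capturing, for each $\kappa\text{-}\Sigma_\alpha$ set $B$, the set of $\eta$ such that some element of $\Orb$-type data places a copy of $\eta$ inside $B$ while respecting the naming of $\bar a$ by $\bar b$. The invariance hypothesis on $A$ is what makes the top-level formula $\varphi_{A,\emptyset}$ actually define $A$ itself rather than merely its ``orbit-saturation''.

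The induction has three cases. At the base, a basic $\kappa$-open set $N_\eta$ with $\dom(\eta)$ of size $<\kappa$ unpacks, via the bijection $\pi:\kappa^{<\omega}\to\kappa$ of Definition~\ref{struct}, into a conjunction of size $<\kappa$ of literals $\pm Q_m(\bar c)$ (for the coordinates on which $\eta$ is $0$ versus positive); quantifying existentially over the $<\omega$-many relevant parameters and over the finitely many elements of $\bar b$ produces the required $L_{\kappa^+\kappa}$-formula — here $\kappa^+\kappa$ is exactly what is needed since we take conjunctions of length up to $\kappa$ but only finite strings of quantifiers at each literal (and $<\kappa$ quantifiers overall through the tuple $\bar a$). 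For $\kappa\text{-}\Sigma_{\alpha+1}$, a $\kappa$-union (resp.\ $\kappa$-intersection) of $\kappa\text{-}\Sigma_\alpha$ sets $B=\bigcup_{\gamma<\kappa}B_\gamma$ yields $\varphi_{B,\bar a}=\bigvee_{\gamma<\kappa}\varphi_{B_\gamma,\bar a}$ (resp.\ a conjunction), which is still an $L_{\kappa^+\kappa}$-formula since $\kappa$-disjunctions are allowed. For $\alpha$ limit, $\kappa\text{-}\Sigma_\alpha=\bigcup_{\beta<\alpha}\kappa\text{-}\Sigma_\beta$, so nothing new is needed. The complement clause — needed because $\kappa$-Borel is closed under complements, so every $\kappa\text{-}\Sigma_\alpha$ with $\alpha$ odd is built from negations — is handled by negating the formula and then re-symmetrizing the Vaught back-and-forth data, which is the usual subtle point: $M_\eta\notin B$ after moving $\bar a$ to $\bar b$ must be expressed as ``for every way of extending $\bar b\mapsto\bar a$ to an isomorphism-germ, $\eta\notin B$'', i.e.\ a $\kappa$-conjunction over the $<\kappa$-sized pieces of partial-iso data, again stays within $L_{\kappa^+\kappa}$ because each ``germ'' is determined by $<\kappa$ coordinates.

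Concretely I would set up Vaught's device: let $\mathrm{Aut}$-approximations be indexed by partial finite-to-finite bijections of $\kappa$, and define for a $\kappa$-Borel $B$ and distinct tuples $\bar a,\bar b\in\kappa^{<\omega}$ of equal length the relation $R_B(\eta,\bar a,\bar b)$ meaning: there is $\eta'\in B$ and an isomorphism $g:M_\eta\to M_{\eta'}$ with $g(\bar a)=\bar b$. One checks $R_{N_\zeta}$ is first-order expressible with finitely many extra quantifiers (plus the $<\kappa$-sized conjunction coming from $|\dom(\zeta)|<\kappa$), that $R_{\bigcup B_\gamma}=\bigcup R_{B_\gamma}$ and $R_{\bigcap B_\gamma}=\bigcap R_{B_\gamma}$, and — the heart of the matter — that $R_{\kappa^\kappa\setminus B}(\eta,\bar a,\bar b)$ holds iff for every tuple $\bar b'$ extending $\bar b$ by one coordinate there is $\bar a'$ extending $\bar a$ by one coordinate with $\neg R_{B}(\eta,\bar a',\bar b')$, i.e.\ a two-quantifier alternation wrapped around $\neg\varphi_{B}$, which by a further $\kappa^+$-length induction (or by invoking a Scott-type analysis bounded by $\kappa^+$) collapses to a single $L_{\kappa^+\kappa}$-formula. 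Finally, apply the claim to $B=A$ with $\bar a=\bar b=\emptyset$: since $A$ is invariant under $(L,\pi)$, $R_A(\eta,\emptyset,\emptyset)\iff\eta\in A$, so $\varphi:=\varphi_{A,\emptyset}$ works. \textbf{The main obstacle} is the complement case: ensuring that the back-and-forth symmetrization needed to turn $\neg\varphi_{B}$ into a genuine $L_{\kappa^+\kappa}$-formula does not blow the quantifier-string length past $\kappa$ nor the formula size past $\kappa^+$ — this is exactly where one must be careful that each ``partial isomorphism germ'' is pinned down by $<\kappa$ data and that the outer recursion on Borel rank stays below $\kappa^+$, so that all the nested conjunctions/disjunctions remain legitimate in $L_{\kappa^+\kappa}$; everything else is bookkeeping with the coding bijection $\pi$.
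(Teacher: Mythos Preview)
The paper gives no proof of its own here; it simply says the lemma is proved exactly as Theorem~24 of \cite{MR3235820}, which is Vaught's argument. Your proposal is precisely that argument, so the approaches coincide.

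Two corrections to your sketch, however. First, the hierarchy $\kappa\text{-}\Sigma_\alpha$ of Definition~\ref{Borel_hier2} has \emph{no} complement step: it starts from the singleton-domain basic opens $N_{\{(i,j)\}}$ and alternates $\kappa$-unions and $\kappa$-intersections only (complements of $\Sigma_0$ sets are already in $\Sigma_1$, and De~Morgan does the rest). So the ``complement clause'' you single out is not an induction case. The genuine subtlety you are pointing at is still there, though: one must carry \emph{two} families of formulas through the induction, corresponding to the Vaught transforms $B^{\Delta U}$ and $B^{*U}$ (informally, ``for some permutation extending $\bar a\mapsto\bar b$'' versus ``for comeager many''), because $\exists^*(\bigcap_\gamma B_\gamma)\ne\bigcap_\gamma\exists^*B_\gamma$. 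Second, your displayed recursion for $R_{\kappa^\kappa\setminus B}$ is not right as stated: $R_{\neg B}$ is not $\neg R_B$, and the quantifier pattern you wrote does not express either. The correct Vaught recursion is $(\neg B)^{\Delta U}=\neg(B^{*U})$ together with $B^{*U}=\bigcap_{V\subseteq U\text{ basic}}B^{\Delta V}$, which unwinds to a \emph{universal} quantifier over all one-point extensions of the pair $(\bar a,\bar b)$ applied to the $\Delta$-formula for $B$ (not an $\forall\exists$-pattern with $\neg R_B$ inside). Once this is fixed, the tuples $\bar a$ remain finite throughout (since $\Sigma_0$ sets have singleton domain and $\pi$ codes finite sequences), no ``further $\kappa^+$-length induction'' is needed, and the formulas produced are genuinely in $L_{\kappa^+\kappa}$.
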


\begin{defn}
Assume $\mathcal{T}$ a first order theory in a relational countable language, we define the isomorphism relation, $\cong_T~\subseteq \kappa^\kappa\times \kappa^\kappa$, as the relation $$\{(\eta,\xi)|(\mathcal{A}_\eta\models \mathcal{T}, \mathcal{A}_\xi\models \mathcal{T}, \mathcal{A}_\eta\cong \mathcal{A}_\xi)\text{ or } (\mathcal{A}_\eta\not\models \mathcal{T}, \mathcal{A}_\xi\not\models \mathcal{T})\}$$
\end{defn}

\begin{thm}\label{iso_borel}
Suppose $\mathcal{T}$ is a countable complete first order theory.
    Suppose $\kappa$ is regular and for all $\alpha<\kappa^+$ there are $\eta,\eta'\in \kappa^\kappa$ such that $M_\eta\models \mathcal{T}$, $M_{\eta'}\models \mathcal{T}$, $M_\eta\equiv^\alpha_{\kappa^+\kappa} M_{\eta'}$, and $M_\eta\not\cong M_{\eta'}$.
    Then $\cong_{\mathcal{T}}$ is not $\kappa$-Borel.
\end{thm}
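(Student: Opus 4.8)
The plan is to argue by contradiction: suppose $\cong_T$ is $\kappa$-Borel, and derive a bound on the $L_{\kappa^+\kappa}$-complexity of the isomorphism relation that contradicts the hypothesis that models can be $\equiv^\alpha_{\kappa^+\kappa}$-equivalent for arbitrarily large $\alpha<\kappa^+$ without being isomorphic. The key point is that $\kappa$-Borel sets, by Lemma \ref{Borel_is_Borel*} and more fundamentally by the proper hierarchy of Corollary \ref{proper_hierarchy}, have a bounded rank: every $\kappa$-Borel set lies in $\kappa\text{-}\Sigma_\alpha$ for some $\alpha<\kappa^+$. In particular $\cong_T$, if it is $\kappa$-Borel, has some $\kappa$-Borel rank $\beta_0 = rk_B(\cong_T) < \kappa^+$.

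First I would set up the translation between the back-and-forth hierarchy $\equiv^\alpha_{\kappa^+\kappa}$ on pairs of models and the $\kappa$-Borel hierarchy $\kappa\text{-}\Sigma_\alpha$ on $\kappa^\kappa\times\kappa^\kappa$. Concretely, for each $\alpha<\kappa^+$ the relation $\{(\eta,\xi): M_\eta\equiv^\alpha_{\kappa^+\kappa} M_\xi\}$ is $\kappa$-Borel of rank bounded by some (strictly increasing, continuous) function of $\alpha$; this is proved by induction on $\alpha$ exactly as the analogous statement under $\kappa^{<\kappa}=\kappa$ (the Ehrenfeucht–Fraïssé characterization of $\equiv^\alpha_{\kappa^+\kappa}$ in terms of the $\alpha$-th level of the back-and-forth game, together with Fact \ref{funcion_borel} and closure of $\kappa$-Borel under $\kappa$-unions and $\kappa$-intersections to handle the quantifier-alternation at each stage). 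The essential feature is that this is a \emph{strict} hierarchy refining the $\kappa$-Borel hierarchy: higher $\alpha$ genuinely requires higher $\kappa$-Borel rank, which is where Corollary \ref{proper_hierarchy} is used.

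The heart of the argument is then the following reduction. By the hypothesis of the theorem, for every $\alpha<\kappa^+$ there is a pair $(\eta_\alpha,\eta'_\alpha)$ of codes of models of $\mathcal{T}$ that are $\equiv^\alpha_{\kappa^+\kappa}$-equivalent but not isomorphic, hence $(\eta_\alpha,\eta'_\alpha)\notin\ {\cong_T}$ while $(\eta_\alpha,\eta'_\alpha)$ does lie in the $\alpha$-th back-and-forth relation. Using these witnesses one builds, for each $\alpha<\kappa^+$, a $\kappa$-continuous reduction of an arbitrary $\kappa\text{-}\Sigma_\alpha$ set into $\cong_T$ (or more directly: one shows that the complement of $\cong_T$, intersected with the models of $\mathcal{T}$, cannot be captured at any fixed level $\kappa\text{-}\Sigma_\beta$, because the $\equiv^\alpha$-relations it must separate from the diagonal of isomorphism climb the whole hierarchy below $\kappa^+$). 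Pushing $\alpha$ above $\beta_0$ contradicts $rk_B(\cong_T)=\beta_0<\kappa^+$. I would carry this out by: (1) fixing $\beta_0=rk_B(\cong_T)$; (2) choosing $\alpha<\kappa^+$ with $\alpha$ large enough that the $\equiv^\alpha_{\kappa^+\kappa}$-relation on models of $\mathcal{T}$ strictly exceeds $\kappa\text{-}\Sigma_{\beta_0}$ in complexity; (3) observing that on models of $\mathcal{T}$ we have the inclusion $\cong_T\ \subseteq\ \{(\eta,\xi):M_\eta\equiv^\alpha_{\kappa^+\kappa}M_\xi\}$ with the witnesses $(\eta_\alpha,\eta'_\alpha)$ showing the inclusion is strict in a way that forces $\cong_T$ to have rank $>\beta_0$; (4) concluding the contradiction.

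The main obstacle I anticipate is making step (2)–(3) precise without the cardinality assumption $\kappa^{<\kappa}=\kappa$: in the classical treatment one freely uses that the back-and-forth game of length $\alpha$ is itself coded by a $\kappa$-Borel set whose rank is \emph{exactly} calibrated against the $\kappa\text{-}\Sigma_\alpha$ hierarchy, and one uses universal sets for each level (our Lemma on $R_\alpha$) to transfer a diagonalization from the hierarchy to the isomorphism relation. Here one must check that the construction of $M_\eta$ in Definition \ref{struct}, and the Ehrenfeucht–Fraïssé analysis, still interacts correctly with basic $\kappa$-open sets in the \emph{new} sense (domains of size $<\kappa$, not just initial segments), and that the coding function $\pi:\kappa^{<\omega}\to\kappa$ produces $\kappa$-continuous maps between the relevant spaces. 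Granting that — which is essentially the content already assembled in Sections 2 and 3, together with Lemma \ref{4.2} — the diagonalization against $R_{\beta_0}$ runs exactly as in Corollary \ref{proper_hierarchy}, and yields that $\cong_T$ cannot have been $\kappa\text{-}\Sigma_{\beta_0}$ for any $\beta_0<\kappa^+$, i.e. it is not $\kappa$-Borel.
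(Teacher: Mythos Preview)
Your proposal has a genuine gap at the crucial step. You correctly identify the overall strategy --- assume $\cong_T$ is $\kappa$-Borel, obtain a bounded $L_{\kappa^+\kappa}$-rank, then contradict the hypothesis --- but you never explain how to pass from ``$\cong_T$ is $\kappa$-Borel'' to ``there is a \emph{single} $L_{\kappa^+\kappa}$-sentence of some fixed rank $\alpha<\kappa^+$ capturing isomorphism of pairs''. Lemma~\ref{4.2} applies only to subsets of $\kappa^\kappa$ that are invariant under isomorphism of \emph{single} structures, whereas $\cong_T$ lives in $\kappa^\kappa\times\kappa^\kappa$ and is invariant under \emph{pairs} of isomorphisms. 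Your steps (2)--(3) try to bypass this by comparing $\cong_T$ with the $\equiv^\alpha$-relations directly in the Borel hierarchy, but the argument ``$\cong_T\subsetneq\equiv^\alpha$ with a witness, hence $\cong_T$ has rank $>\beta_0$'' is a non sequitur: a strict subset of a complicated set may well be simple. The diagonalization against $R_{\beta_0}$ you invoke at the end works for arbitrary $\kappa$-Borel sets, not for invariant ones, and does not produce the needed conclusion.

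The paper's proof supplies exactly the missing idea: encode a \emph{pair} of $L$-structures as a \emph{single} $(L\cup\{P\})$-structure by splitting the domain $\kappa$ into two pieces via a new unary predicate $P$, and check that the decoding map $h$ is $\kappa$-continuous on a $\kappa$-Borel set $S$. Then $X=h^{-1}[\cong_T]$ is a $\kappa$-Borel subset of $\kappa^\kappa$ that \emph{is} invariant under $(L\cup\{P\})$-isomorphism, so Lemma~\ref{4.2} yields a single sentence $\varphi$ of some rank $\alpha$. Now the hypothesis gives $\eta_0\equiv^\alpha_{\kappa^+\kappa}\eta_1$ with $M_{\eta_0}\not\cong M_{\eta_1}$; the codes of $(\eta_0,\eta_0)$ and $(\eta_0,\eta_1)$ in $S$ are then $\equiv^\alpha_{\kappa^+\kappa}$-equivalent as $(L\cup\{P\})$-structures but separated by $\varphi$, a contradiction. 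This pair-to-single-structure trick is the one concrete move your outline lacks.
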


\begin{cproof} 

Before we start with the proof, let us make some preparations.

Let $\mathcal{T}$ be a countable complete first order theory in a relational countable language, $L$, and let $P$ be an unary relation symbol not in $L$. 
We will work with two different kind of structures, we will work with $L$-structures and with $L\cup\{P\}$-structures. Let $L=\{Q_m\mid m\in\omega\}$ and let us enumerate $L\cup\{P\}$ by $=\{P_m\mid m\in\omega\}$, where $P=P_0$ and $Q_m=P_{m+1}$. We will use the coding from Definition \ref{struct} to code $L$-structures and $L\cup\{P\}$-structures. To avoid misunderstandings, we will denote $L$-structures different from $L\cup \{P\}$-structures. Let $\eta\in \kappa^\kappa$, we will denote by $M_\eta$ the $L$-structure coded by $\eta$, and by $M^\eta$ the $L\cup \{P\}$-structure coded by $\eta$. Recall $\pi$, the bijection from Definition \ref{struct}.

Let $B=\pi[\{0\}\times \kappa]$ and fix $S_1,S_2\subseteq B$ disjoint sets of size $\kappa$ such that $B=S_1\cup S_2$. For all $\alpha\in B$, we will denote by $\rho(\alpha)$ the projection on the second coordinate of $\pi^{-1}(\alpha)$ (i.e. $\rho(\alpha)=pr_2(\pi^{-1}(\alpha))$).

It is easy to see that $\{\eta\in \kappa^\kappa\mid P^{M^{\eta}}=\rho[S_1]\}$ is $\kappa$-Borel, since it is the $\kappa$-intersection of the complement of basic $\kappa$-open sets. At the same time, the set of all $\eta\in \kappa^\kappa$ such that for all $0<m<\omega$ and $n$-tuple $(a_1,\ldots, a_n)\in \kappa^n\backslash (\rho[S_1]^n \cup \rho[S_2]^n)$, $\eta(\pi(m,a_1\ldots,a_n))=0$, is $\kappa$-Borel. Thus the intersection of these two sets is $\kappa$-Borel, let us detone it by $S$.

Let us define $h:S\rightarrow 2^\kappa\times 2^\kappa$ as follows. Let $r_1:\kappa\rightarrow \rho[S_1]$ and $r_2:\kappa\rightarrow \rho[S_2]$ be the order preserving bijection. 

For all pairs $(\eta_1,\eta_2)\in 2^\kappa\times 2^\kappa$ define $h^{-1}[(\eta_1,\eta_2)]$ as the subset of $\kappa^\kappa$ of $\xi$'s such that the following hold:
\begin{itemize}
    \item If $\pi(0,a)\in S_1$, then $\xi(\pi(0,a))>0$.
    \item If $\pi(0,a)\in S_2$, then $\xi(\pi(0,a))=0$.
    \item For all $m<\omega$ and $n$-tuple $(a_1,\ldots, a_n)\in \kappa^n\backslash (\rho[S_1]^n \cup \rho[S_2]^n)$, $\xi(\pi(m,a_1\ldots,a_n))=0$.
    \item For any $n$-tuple $(a_1,\ldots, a_n)\in \kappa^n$ and $m<\omega$, $$\xi(\pi(m+1,r_1(a_1)\ldots,r_1(a_n)))>0$$ if and only if $\eta_1(\pi(m,a_1,\ldots,a_n))=1$.
    \item For any $n$-tuple $(a_1,\ldots, a_n)\in \kappa^n$ and $m<\omega$, $$\xi(\pi(m+1,r_2(a_1)\ldots,r_2(a_n)))>0$$ if and only if $\eta_2(\pi(m,a_1,\ldots,a_n))=1$.
\end{itemize}

Clearly, for all $\xi\in S$, there is a unique pair $(\eta_1,\eta_2)\in 2^\kappa\times 2^\kappa$ such that $\xi\in h^{-1}[(\eta_1,\eta_2)]$. 
Since for all $\eta_1,\eta_2\in 2^\kappa$, $h^{-1}[(\eta_1,\eta_2)]\subseteq S$, $h$ is well defined. Notice that if $h(\xi)=(\eta_1,\eta_2)$, then $r_1$ is an ismorphism from $M_{\eta_1}$ to $(M^{\xi}\cap P^{M^{\xi}})\restriction L$ and $r_2$ is an ismorphism from $M_{\eta_2}$ to $(M^{\xi}\backslash P^{M^{\xi}})\restriction L$.


\begin{claim}
    $h$ is $\kappa$-continuous in $S$, i.e. the inverse image of a $\kappa$-open, $U$, is a $\kappa$-open set intersected with $S$.
\end{claim}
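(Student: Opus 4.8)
The plan is to unwind the definition of $h$ coordinate by coordinate. A $\kappa$-open set $U \subseteq 2^\kappa \times 2^\kappa$ is a $\kappa$-union of basic $\kappa$-open sets $N_{\sigma_1} \times N_{\sigma_2}$, where $\sigma_1, \sigma_2 : X \to \{0,1\}$ with $|X| < \kappa$. Since $h^{-1}[U] = \bigcup h^{-1}[N_{\sigma_1} \times N_{\sigma_2}]$ and $\kappa$-open sets (intersected with $S$, using that $S$ is $\kappa$-Borel so relatively $\kappa$-open suffices for our purposes here — actually $\kappa$-open suffices) are closed under $\kappa$-unions, it is enough to show that for each basic $\kappa$-open $N_{\sigma_1} \times N_{\sigma_2}$, the set $h^{-1}[N_{\sigma_1} \times N_{\sigma_2}] \cap S$ is $\kappa$-open intersected with $S$.

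So fix $\sigma_1$ with domain $X_1 \subseteq \kappa$, $|X_1| < \kappa$, and similarly $\sigma_2$ with domain $X_2$. First I would observe that $h(\xi) = (\eta_1,\eta_2) \in N_{\sigma_1} \times N_{\sigma_2}$ iff for every $j \in X_1$, $\eta_1(j) = \sigma_1(j)$, and for every $j \in X_2$, $\eta_2(j) = \sigma_2(j)$. Now the condition ``$\eta_1(j) = 1$'' translates, via the definition of $h$, into a condition on $\xi$: writing $j = \pi(m, a_1, \dots, a_n)$, it says $\xi(\pi(m+1, r_1(a_1), \dots, r_1(a_n))) > 0$; and ``$\eta_1(j) = 0$'' says $\xi(\pi(m+1, r_1(a_1), \dots, r_1(a_n))) = 0$. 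Each such condition constrains $\xi$ at a single coordinate $c_j := \pi(m+1, r_1(a_1), \dots, r_1(a_n))$: either $\xi(c_j) = 0$ (which is the basic $\kappa$-open set $N_{\{(c_j,0)\}}$) or $\xi(c_j) > 0$ (which equals $\bigcup_{0 < \beta < \kappa} N_{\{(c_j,\beta)\}}$, a $\kappa$-union of basic $\kappa$-open sets, hence $\kappa$-open). The same analysis applies on the $S_2$-side with $r_2$ in place of $r_1$. Since there are $|X_1| + |X_2| < \kappa$ many such single-coordinate conditions, $h^{-1}[N_{\sigma_1} \times N_{\sigma_2}]$ is an intersection of fewer than $\kappa$ many $\kappa$-open sets — but that is not yet manifestly $\kappa$-open, so here I would instead note that a $<\kappa$-intersection of sets each of which is a single basic $\kappa$-open set \emph{or} a $\kappa$-union of single-coordinate basic $\kappa$-open sets, when all the coordinates involved are distinct, is itself a basic $\kappa$-open set (for the equality constraints) intersected with a $<\kappa$-product structure — more carefully, it is $N_\tau$ for a partial function $\tau$ recording all the ``$=0$'' coordinates, intersected with $\bigcap$ over the ``$>0$'' coordinates $c$ of $\bigcup_{0<\beta<\kappa} N_{\{(c,\beta)\}}$; distributing, this is a $\kappa$-union of basic $\kappa$-open sets of the form $N_{\tau \cup \{(c,\beta_c)\}_c}$, whose combined domain still has size $< \kappa$. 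Hence $h^{-1}[N_{\sigma_1} \times N_{\sigma_2}]$ is $\kappa$-open.

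Finally I would check that when we intersect everything with $S$, nothing goes wrong: $h$ is only defined on $S$, and the displayed bullet conditions defining $h^{-1}[(\eta_1,\eta_2)]$ include the side conditions (values on $S_1$ positive, values on $S_2$ zero, values off $\rho[S_1]^n \cup \rho[S_2]^n$ zero) that are exactly the conditions cutting out $S$; so $h^{-1}[N_{\sigma_1} \times N_{\sigma_2}] \cap S$ equals the $\kappa$-open set just described, intersected with $S$, as required. The main obstacle — and the only place needing care — is the bookkeeping that the various coordinates $c_j$ arising from different $j \in X_1 \cup X_2$ are genuinely distinct (which follows from $\pi$ and $r_1, r_2$ being injections with disjoint ranges $S_1, S_2$), so that distributing the intersection over the $\kappa$-unions keeps the resulting index set of size $< \kappa$ and therefore yields a bona fide basic $\kappa$-open set at each branch; once that is clear, the $\kappa$-openness of $h^{-1}[U] \cap S$ is immediate.
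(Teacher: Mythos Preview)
Your approach matches the paper's: both reduce to a basic open $N_p \times N_q$, identify the $<\kappa$ many relevant coordinates (the paper calls this set $A_p \cup A_q$), and observe that membership of $\xi$ in $h^{-1}[N_p \times N_q]$ is determined entirely by $\xi \restriction (A_p \cup A_q)$. The paper simply records that for each $\xi$ in the preimage one has $N_{\xi \restriction (A_p \cup A_q)} \cap S \subseteq h^{-1}[N_p \times N_q]$, and stops there; your version is the same computation made explicit.

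There is, however, a gap in your distributing step. You assert ``distributing, this is a $\kappa$-union,'' but when the set $Z_+$ of ``$>0$'' coordinates is infinite, distributing $\bigcap_{c \in Z_+} \bigcup_{0<\beta<\kappa} N_{\{(c,\beta)\}}$ produces $\kappa^{|Z_+|}$ terms, which can exceed $\kappa$ (for instance if $|Z_+|=\omega$ and $\kappa^\omega > \kappa$ --- precisely the regime the paper is targeting). In that situation the preimage is in fact \emph{not} $\kappa$-open: any basic $N_\eta$ contained in it must have $Z_+ \subseteq \dom(\eta)$ with $\eta(c)>0$ for each $c \in Z_+$, and there are $\kappa^{|Z_+|}>\kappa$ such restrictions to cover. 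The paper's own proof glosses over the same point. What your argument (and the paper's) does correctly establish is that the preimage of a basic open is an intersection of fewer than $\kappa$ single-coordinate $\kappa$-open conditions, hence $\kappa$-Borel relative to $S$; and that weaker conclusion is exactly what the subsequent step (pulling back $\kappa$-Borel sets through $h$) actually requires.
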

\begin{cproof}
    Let $U=N_p\times N_q$ be a basic $\kappa$-open set of $2^\kappa\times2^\kappa$ and $\xi\in h^{-1}[U]$. 
    Let $$A_p=\{\pi(m+1,r_1(a_1)\ldots,r_1(a_n))\mid (m,a_1,\ldots,a_n)\in \pi^{-1}[\dom(p)]\}$$ and $$A_q=\{\pi(m+1,r_2(a_1)\ldots,r_2(a_n))\mid (m,a_1,\ldots,a_n)\in \pi^{-1}[\dom(q)]\}.$$
    Since $U$ is a basic open set, $|\dom(p)|,|\dom(q)|<\kappa$. Thus $|A_p\cup A_q|<\kappa$. Therefore, $N_{\xi\restriction A_p\cup A_q}\cap S\subseteq h^{-1}[U]$, so $h$ is $\kappa$-continuous in $S$.
\end{cproof}

Since $S$ is $\kappa$-Borel, a variations of Fact \ref{funcion_borel} applies to $h$: If $Y\subseteq 2^\kappa\times 2^\kappa$ is $\kappa$-Borel, then $h^{-1}[Y]$ is $\kappa$-Borel.

Notice that $h$ is surjective. We will denote $h(\xi)$ by $(h_1(\xi),h_2(\xi))$.

Let us assume, for sake of contradiction, that $\mathcal{T}$ satisfies the assumptions and $\cong_T$ is $\kappa$-Borel.
Let $$X=\{\xi\in S\mid M_{h_1(\xi)}\cong M_{h_2(\xi)}\}=h^{-1}[\cong_T\cap(2^\kappa\times 2^\kappa)],$$ since $2^\kappa\times 2^\kappa$ and $\cong_T$ are $\kappa$-Borel, $X$ is $\kappa$-Borel. 
Notice that if $\eta$ and $\xi$ are such that $M^\eta$ and $M^\xi$ are isomorphic, then $(M^{\eta}\cap P^{M^{\eta}})\restriction L$ and $(M^{\xi}\cap P^{M^{\xi}})\restriction L$ are isomorphic, and the same holds for $(M^{\eta}\backslash P^{M^{\eta}})\restriction L$ and $(M^{\xi}\backslash P^{M^{\xi}})\restriction L$. 
Thus $X$ is invariant under $(L\cup\{P\},\pi)$ and by Lemma \ref{4.2}, there is an $L_{\kappa^+\kappa}$-sentence $\varphi$ such that $X=\{\eta\in \kappa^\kappa\mid M^\eta\models \varphi\}$. Let $\alpha$ be the rank of $\varphi$.

By the way $h$ was defined and our assumptions on $\mathcal{T}$, there are $\eta_0,\eta_1\in 2^\kappa$ such that $M_{\eta_0}\models \mathcal{T}$, $M_{\eta_1}\models \mathcal{T}$, $M_{\eta_0}\not\cong M_{\eta_1}$, and $M_{\eta_0}\equiv_{\kappa^+\kappa}^\alpha M_{\eta_1}$. Let $\eta,\xi\in S$ be such that $h(\eta)=(\eta_0,\eta_0)$ and $h(\xi)=(\eta_0,\eta_1)$. Clearly $\eta\in X$, $\xi\notin X$, and $M_{\eta}\equiv_{\kappa^+\kappa}^\alpha M_{\xi}$. This contradicts that $\varphi$ has rank $\alpha$.

    \end{cproof}

\begin{thm}Suppose $\mathcal{T}$ is a countable complete first order theory.
    Suppose $\kappa$ 
    is regular and $\eta\in \kappa^\kappa$ are such that $M_\eta\models \mathcal{T}$, and for all $\alpha<\kappa^+$ there is $\xi\in \kappa^\kappa$ such that $M_\xi\models \mathcal{T}$, $M_\xi \not\cong M_\eta$ and $M_\xi\equiv^\alpha_{\kappa^+\kappa} M_\eta$. Then $\Orb(M_\eta)$ is not $\kappa$-Borel. 
\end{thm}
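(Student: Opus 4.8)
The plan is to argue by contradiction and reduce directly to Lemma~\ref{4.2}. The key observation is that, unlike $\cong_T$ (which relates \emph{pairs} of codes and therefore forced the detour through the auxiliary predicate $P$ in the proof of Theorem~\ref{iso_borel}), a single orbit already lives in $\kappa^\kappa$ and is automatically invariant under $(L,\pi)$: if $M_\zeta\cong M_\eta$ and $M_\zeta\cong M_{\zeta'}$, then $M_{\zeta'}\cong M_\eta$, so $\zeta'\in\Orb(M_\eta)$. Hence no coding construction is needed at all, and the proof is considerably shorter than that of Theorem~\ref{iso_borel}.

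So, assume towards a contradiction that $\Orb(M_\eta)$ is $\kappa$-Borel. Since it is invariant under $(L,\pi)$ and $\kappa>\omega$ is regular (for $\kappa=\omega$ the hypothesis is vacuous by Scott's theorem, the Scott rank of a countable structure being countable), Lemma~\ref{4.2} supplies an $L_{\kappa^+\kappa}$-sentence $\varphi$ with $\Orb(M_\eta)=\{\zeta\in\kappa^\kappa\mid M_\zeta\models\varphi\}$; let $\alpha<\kappa^+$ be the rank of $\varphi$. Now invoke the hypothesis of the theorem for this particular $\alpha$: there is $\xi\in\kappa^\kappa$ with $M_\xi\models\mathcal{T}$, $M_\xi\not\cong M_\eta$ and $M_\xi\equiv^\alpha_{\kappa^+\kappa}M_\eta$. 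Since $\eta\in\Orb(M_\eta)$ we have $M_\eta\models\varphi$; as $\varphi$ has rank $\alpha$, the standard back-and-forth (Ehrenfeucht--Fra\"{\i}ss\'{e}) characterization of $\equiv^\alpha_{\kappa^+\kappa}$ --- the same fact used at the end of the proof of Theorem~\ref{iso_borel} --- gives $M_\xi\models\varphi$. Thus $\xi\in\Orb(M_\eta)$, i.e.\ $M_\xi\cong M_\eta$, contradicting $M_\xi\not\cong M_\eta$.

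There is no genuinely hard step: the argument is essentially a one-line consequence of Lemma~\ref{4.2} once one notices that an orbit is invariant, and it does not even use completeness of $\mathcal{T}$ or the clause $M_\xi\models\mathcal{T}$ in the hypothesis. The only points needing (minor) care are verifying that $\Orb(M_\eta)$ is invariant in the precise sense of the paragraph following Definition~\ref{struct}, checking that the case $\kappa=\omega$ is vacuous so that the hypotheses of Lemma~\ref{4.2} are met, and recalling the exact convention relating the quantifier rank of an $L_{\kappa^+\kappa}$-sentence to the equivalences $\equiv^\alpha_{\kappa^+\kappa}$.
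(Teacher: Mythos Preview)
Your argument is correct and genuinely more direct than the paper's. The paper chooses to reuse the full apparatus from the proof of Theorem~\ref{iso_borel}: it expands the language by a unary predicate $P$, builds the map $h$ coding a pair of $L$-structures as a single $L\cup\{P\}$-structure, pulls back $\Orb(M_\eta)\times\Orb(M_\eta)$ along $h$, and only then applies Lemma~\ref{4.2} in the expanded language to obtain a sentence whose rank is violated by the pair $(\eta,\eta)$ versus $(\eta,\xi)$. You instead exploit the fact that a single orbit, unlike the binary relation $\cong_T$, already sits inside $\kappa^\kappa$ and is manifestly invariant under $(L,\pi)$, so Lemma~\ref{4.2} applies immediately with no coding detour. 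Your route is shorter and makes transparent that neither the completeness of $\mathcal{T}$ nor the clause $M_\xi\models\mathcal{T}$ is actually used; the paper's route has the virtue of being a verbatim specialization of the machinery already set up for Theorem~\ref{iso_borel}, but at the cost of obscuring how elementary this corollary really is. Your handling of the side condition $\kappa>\omega$ in Lemma~\ref{4.2} via Scott's theorem is also fine.
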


\begin{cproof}
We will use the same preparation as is Theorem \ref{iso_borel}, i.e. the function $h$ and the models $M^\eta$.

Let us assume, for sake of contradiction, that $\mathcal{T}$ and $\eta$ satisfy the assumptions and $\Orb(M_{\eta})$ is $\kappa$-Borel. Let $$\begin{array}{lcl}
X&=&\{\xi\in S\mid M_{h_1(\xi)}\cong M_{h_2(\xi)}\cong M_{\eta}\}\\
&=&h^{-1}[(\Orb(M_{\eta})\times \Orb(M_{\eta}))\cap (2^\kappa\times 2^\kappa)],\end{array}$$ since $\Orb(M_{\eta})$ are $X$ are $\kappa$-Borel. Notice that $X$ is invariant under $(L\cup\{P\},\pi)$ and by Lemma \ref{4.2}, there is an $L_{\kappa^+\kappa}$-sentence $\varphi$ such that $X=\{\eta\in \kappa^\kappa\mid M^\eta\models \varphi\}$. Let $\alpha$ be the rank of $\varphi$.

By the way $h$ was defined and our assumptions on $\mathcal{T}$ and $\eta$, there is $\xi\in 2^\kappa$ such that $M_{\xi}\models \mathcal{T}$, $M_{\eta}\not\cong M_{\xi}$, and $M_{\eta}\equiv_{\kappa^+\kappa}^\alpha M_{\xi}$.
Let $\zeta,\zeta'\in S$ be such that $h(\zeta)=(\eta,\eta)$ and $h(\zeta)=(\eta,\xi)$. Clearly $\zeta\in X$, $\zeta'\notin X$, and $M_{\zeta}\equiv_{\kappa^+\kappa}^\alpha M_{\zeta'}$. This contradicts that $\varphi$ has rank $\alpha$.

\end{cproof}

\begin{cor}
    Suppose $\mathcal{T}$ is a countable non-classifiable theory (i.e. $\mathcal{T}$ is not a superstable stable theory without DOP and without OTOP) and $\kappa>\omega$ is regular, then the isomorphism relation of $\mathcal{T}$, $\cong_\mathcal{T}$, is not $\kappa$-Borel on the Generalized Baire Space $\kappa^\kappa$.
\end{cor}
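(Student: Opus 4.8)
The statement follows at once from Theorem~\ref{iso_borel}: it is enough to check that a countable complete non-classifiable theory $\mathcal{T}$ satisfies, for every regular $\kappa>\omega$, the hypothesis of that theorem, namely that for each $\alpha<\kappa^+$ there exist $\eta,\eta'\in\kappa^\kappa$ with $M_\eta\models\mathcal{T}$, $M_{\eta'}\models\mathcal{T}$, $M_\eta\equiv^\alpha_{\kappa^+\kappa}M_{\eta'}$ and $M_\eta\not\cong M_{\eta'}$. Thus the whole content is the ``non-structure'' half of Shelah's classification theory, phrased for the Ehrenfeucht--Fra{\"\i}ss{\'e} hierarchy, and the only point requiring attention is that the relevant constructions never use $\kappa^{<\kappa}=\kappa$.

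I would argue case by case according to which of the non-classifiability conditions holds. If $\mathcal{T}$ is unstable, one uses the order property to build Ehrenfeucht--Mostowski models over linear orders; if $\mathcal{T}$ is stable but not superstable, one uses instead the colored-tree construction of Hyttinen--Tuuri; if $\mathcal{T}$ is superstable with OTOP, one uses the OTOP-coding of Hyttinen--Shelah; and if $\mathcal{T}$ is superstable with DOP, one uses the DOP tree construction. In each case, for the prescribed $\alpha<\kappa^+$ one chooses two index structures of size $\kappa$ that are $\equiv^\alpha_{\kappa^+\kappa}$-equivalent (in the first three cases one can even take them $\equiv_{\infty\kappa}$-equivalent) but non-isomorphic, and the associated EM-models $M_\eta,M_{\eta'}$ of power $\kappa$ are the desired pair; in the DOP case only the weaker $\equiv^\alpha_{\kappa^+\kappa}$-equivalence is guaranteed, but that is exactly what Theorem~\ref{iso_borel} asks for. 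These are precisely the models used, under the extra assumption $\kappa^{<\kappa}=\kappa$, in \cite{MR3235820}; inspecting those proofs one sees that the index structures, the EM-functor, and the back-and-forth systems witnessing $\equiv^\alpha_{\kappa^+\kappa}$ are all obtained by transfinite recursions of length $<\kappa^+$ over sets of size $\le\kappa$, so the cardinal $\kappa^{<\kappa}$ never enters.

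With the hypothesis of Theorem~\ref{iso_borel} verified, that theorem yields that $\cong_\mathcal{T}$ is not $\kappa$-Borel on $\kappa^\kappa$, which is the assertion of the corollary.

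The main obstacle I expect is exactly this last verification: confirming that each of the four classical non-structure arguments runs for an \emph{arbitrary} regular $\kappa>\omega$, that it delivers Ehrenfeucht--Fra{\"\i}ss{\'e} equivalence at level $\alpha$ for \emph{every} prescribed $\alpha<\kappa^+$ (not merely for unboundedly many $\alpha$, nor only below some fixed ordinal), and that the models produced genuinely have cardinality $\kappa$. The superstable-with-DOP case, where full $\equiv_{\infty\kappa}$-equivalence is not at hand, needs the most care; however, since Theorem~\ref{iso_borel} only needs $\equiv^\alpha_{\kappa^+\kappa}$ one $\alpha$ at a time, its comparatively weak output still suffices, and it is precisely this flexibility that lets one dispense with $\kappa^{<\kappa}=\kappa$.
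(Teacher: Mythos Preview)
Your argument is correct, but the paper takes a much shorter route. Rather than splitting into the four non-structure cases and inspecting each construction for hidden uses of $\kappa^{<\kappa}=\kappa$, the paper simply invokes Shelah's Main Conclusion (Theorem~0.2 of \cite{MR899084}): for any countable non-classifiable $\mathcal{T}$ and any regular $\kappa>\omega$ there exist $2^\kappa$ pairwise non-isomorphic $L_{\infty\kappa}$-equivalent models of power $\kappa$, with no assumption on $\kappa^{<\kappa}$. Since $L_{\infty\kappa}$-equivalence implies $\equiv^\alpha_{\kappa^+\kappa}$ for every $\alpha<\kappa^+$, the hypothesis of Theorem~\ref{iso_borel} is met in one stroke for all four cases, including DOP; your caution that only level-by-level $\equiv^\alpha_{\kappa^+\kappa}$ is available there is thus unnecessary. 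What your approach buys is a more explicit accounting of where cardinal arithmetic might enter; what the paper's approach buys is brevity and the avoidance of your ``inspecting those proofs'' step, which in your write-up is asserted rather than carried out.
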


\begin{cproof}
    From \cite{MR899084} Theorem 0.2 (Main Conclusion), there are $2^\kappa$ pairwise non-isomorphic $L_{\infty \kappa}$-equivalent models of power $\kappa$. The result follows from Theorem \ref{iso_borel}.
\end{cproof}

\section{Orbits of uncountable models: the structure case}

We prove Borel-definability results for orbits and for the isomorphism relation of models of cardinality $\kappa$  in the structure case of Shelah's Main Gap. 
Some of the results in this section have as an assumption the regularity of $\kappa$ and $\kappa=\kappa^\omega$. This assumption is, of course, weaker than the assumption $\kappa=\kappa^{<\kappa}$ prevalent in earlier work in this area.

Let $L^+=\{<^n, R_\alpha\mid 0<n<\omega, \alpha<\kappa\}$ and $M^+$ and $L^+$-structure such that $\dom (M^+)=\kappa$ and for all $0<n<\omega$, $<^n {}^{M^+}$ is a well-ordering of $\kappa^n$ of order type $\kappa$ and $R_\alpha^{M^+}=\alpha$. We assume that $L^+\cap L=\emptyset$ and write $L^*=L\cup L^+$. By $L^*_{\kappa^+\lambda}$ we mean $L_{\kappa^+\lambda}$-formulas in the vocabulary $L^*$. By $M_\eta^*$ we mean the $L^*$-structure such that $M_\eta^*\restriction L=M_\eta$ and $M_\eta^*\restriction L^+=M^+$. Notice that instead of these $L^+$ and $M^+$, we could use any $L^+$ of size at most $\kappa$ and an $M^+$ structure such that $\dom (M^+)=\kappa$ and Lemma \ref{5.1}, below, still holds. However, these $L^+$ and $M^+$ are the ones that we will use in this section.

The following Lemma extends [\cite{MR4409720}, Corollary 8.10 (a)] to the models $M^*_\eta$.

\begin{lemma}\label{5.1}
    Let $X\subseteq \kappa^\kappa$. If there is an $L^*_{\kappa^+\lambda}$-sentence $\varphi$  such that $\eta\in X$ if and only if $M_\eta^*\models \varphi$, whenever $\eta\in\kappa^\kappa$, then $X$ is $(\kappa,\kappa^{< \lambda})$-Borel.
\end{lemma}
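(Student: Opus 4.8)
The plan is to prove this by induction on the complexity of the $L^*_{\kappa^+\lambda}$-sentence $\varphi$, showing that for every $L^*_{\kappa^+\lambda}$-formula $\psi(x_1,\ldots,x_n)$ and every tuple $\bar a\in\kappa^n$, the set $\{\eta\in\kappa^\kappa : M^*_\eta\models\psi(\bar a)\}$ is $(\kappa,\kappa^{<\lambda})$-Borel. The point is that the interpretations of the symbols in $L^+$ (the well-orderings $<^n$ and the predicates $R^n_\theta$) are \emph{fixed}, independent of $\eta$, so each atomic formula involving only $L^+$-symbols defines a set that is either all of $\kappa^\kappa$ or empty (hence $(\kappa,\kappa)$-Borel), while an atomic formula $Q_m(\bar a)$ from $L$ defines a basic $\kappa$-open set $N_{\{(\pi(m,\bar a),1)\}}$ by Definition~\ref{struct}; its negation is a $\kappa$-open set. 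So the base case is immediate.

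For the inductive step, negation is handled by closure of the $(\kappa,\kappa^{<\lambda})$-Borel sets under complement. Conjunctions and disjunctions of size $<\kappa^+$, i.e. of size $\le\kappa$, are handled by closure under $\kappa$-unions and $\kappa$-intersections (note $\kappa\le\kappa^{<\lambda}$ since $\lambda\ge 2$, or one can just use that we only ever need $\le\kappa$-sized Boolean combinations in $L_{\kappa^+\lambda}$). The genuinely new ingredient is the quantifier case: in $L_{\kappa^+\lambda}$ one may quantify over a block of $<\lambda$ variables simultaneously, say $\exists\bar y\,\psi(\bar x,\bar y)$ with $|\bar y|=\mu<\lambda$. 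Then
$$\{\eta : M^*_\eta\models\exists\bar y\,\psi(\bar a,\bar y)\}=\bigcup_{\bar b\in\kappa^\mu}\{\eta : M^*_\eta\models\psi(\bar a,\bar b)\},$$
and this is a union of $\kappa^\mu$ many $(\kappa,\kappa^{<\lambda})$-Borel sets; since $\mu<\lambda$ we have $\kappa^\mu\le\kappa^{<\lambda}$, so the union has size $\le\kappa^{<\lambda}$ and the result is $(\kappa,\kappa^{<\lambda})$-Borel. The universal quantifier is dual, using intersections. Finally, applying this to the sentence $\varphi$ itself (no free variables) gives that $X=\{\eta : M^*_\eta\models\varphi\}$ is $(\kappa,\kappa^{<\lambda})$-Borel.

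The main point to be careful about — the step I expect to be the only real obstacle — is bookkeeping the sizes: one must verify that along the whole syntactic construction tree of $\varphi$ the index sets of all the unions and intersections remain $\le\kappa^{<\lambda}$. Conjunctions/disjunctions contribute index sets of size $<\kappa^+$ (hence $\le\kappa\le\kappa^{<\lambda}$), quantifier blocks over $\mu<\lambda$ variables contribute index sets of size $\kappa^\mu\le\kappa^{<\lambda}$, and the construction tree itself has height $<\kappa^+$ and is well-founded, so no infinitary iteration blows the bound past $\kappa^{<\lambda}$. One should also note explicitly that $M^*_\eta$ depends on $\eta$ \emph{only} through the $L$-part $M_\eta$ (the $L^+$-part being a fixed expansion determined by the ambient fixed well-orderings $<^n$), which is what makes the atomic $L^+$-formulas contribute only trivial ($\emptyset$ or $\kappa^\kappa$) sets rather than something that must itself be analyzed.
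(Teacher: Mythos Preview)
Your proposal is correct and follows essentially the same route as the paper: induction on formula complexity, noting that $L^+$-atomic formulas give $\emptyset$ or $\kappa^\kappa$ (since the $L^+$-part of $M_\eta^*$ is fixed), $L$-atomic formulas give $\kappa$-open sets, Boolean connectives of size $\le\kappa$ stay within the class, and a quantifier block of length $\mu<\lambda$ produces a union/intersection indexed by $\kappa^\mu\le\kappa^{<\lambda}$.

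Two small corrections. First, in $\kappa^\kappa$ (rather than $2^\kappa$) the set defined by $Q_m(\bar a)$ is $\{\eta:\eta(\pi(m,\bar a))>0\}=\bigcup_{0<j<\kappa}N_{\{(\pi(m,\bar a),j)\}}$, a $\kappa$-union of basic $\kappa$-open sets rather than a single basic open set; this changes nothing in the argument. Second, subformulas of an $L_{\kappa^+\lambda}$-sentence can have up to $<\lambda$ free variables (not just finitely many), so the induction hypothesis should be stated for assignments $s:\gamma\to\kappa$ with $\gamma<\lambda$, as the paper does; your quantifier step already implicitly uses this.
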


\begin{cproof}
    Let $\varphi$ be an $L^*_{\kappa^+\lambda}$-formula with variables $\bar{x}$, let $\gamma<\lambda$ be the arity of $\bar{x}$. It is enough to show that for any free variables assignment $s:\gamma\rightarrow \kappa$, the set $$X^s_\varphi:=\{\eta\in \kappa^\kappa\mid M_\eta^*\models_s \varphi\}$$ is $(\kappa,\kappa^{< \lambda})$-Borel. 
    Let $s$ be a free variables assignment, we will proceed by induction on formulas, $\psi$, to show that $X^s_\psi$ is $(\kappa,\kappa^{< \lambda})$-Borel.

    Case $\varphi$ is atomic: 
    \begin{itemize}
        \item $\psi$ is an $L$-formula: Let $\psi=R(x_1,x_2,\ldots , x_n)$, thus $M_\eta^*\models_s \psi$ if and only if $\eta(\alpha)>0$, where $\alpha=\pi(k,s(1),s(2),\ldots , s(n))$ and $R=Q_k$. Therefore, $X^s_\psi$ is the union of $\kappa$ basic $\kappa$-open sets. So, it is $(\kappa,\kappa^{< \lambda})$-Borel. 
        \item $\psi$ is an $L^+$-formula: Let $\psi=P(x_1,x_2,\ldots , x_n)$. Notice that for all $N,M\models \psi$, $N\restriction L^+= M\restriction L^+$. Thus $(s(1),s(2),\ldots , s(n))\in P^M$ if and only if $(s(1),s(2),\ldots , s(n))\in P^N$. Therefore $X^s_\psi=\emptyset$ or $X^s_\psi=\kappa^\kappa$.
    \end{itemize}
    
    Case negation: Let $\psi=\neg \psi'$ such that $\psi'$ is a $L^*$-formula and $X^s_{\psi'}$ is $(\kappa,\kappa^{< \lambda})$-Borel. Since $(\kappa,\kappa^{< \lambda})$-Borel is closed under complement, $X^s_\psi$ is $(\kappa,\kappa^{< \lambda})$-Borel. 
    
    Case conjunction and disjunction: Let $\{\psi_i\mid i<\delta<\kappa^{< \lambda}\}$ be a set of $L^*$-formulas such that for all $X^s_{\psi_i}$ is $(\kappa,\kappa^{< \lambda})$-Borel. Let $\psi=\bigwedge_{i<\delta}\psi_i$, so $X^s_\psi=\bigcap_{i<\delta}X^s_{\psi_i}$. Thus $X^s_\psi$ is $(\kappa,\kappa^{< \lambda})$-Borel. The disjunction case is similar.

    Case quantifiers: Let $\{V_i\mid i<\delta<\kappa^{< \lambda}\}$ be a set of variables and $\psi'$ a $L^*$-formula such that $X^s_{\psi'}$ is $(\kappa,\kappa^{< \lambda})$-Borel. Let $\psi=\exists V_0 \exists V_1 \cdots \psi'$, it is clear that $X^s_\psi=X^s_{\exists\bar{V} \psi'}=\bigcup_{\bar a\in \kappa}X^{s(\bar a /\bar V)}_{\psi'}$, thus $X^s_\psi$ is $(\kappa,\kappa^{< \lambda})$-Borel. The universal quantifier case is similar.
\end{cproof}

We call $\mathbf{P}\subseteq\mathcal{P}(\kappa^n)$ a property of $n$-ary relations. We say that a relation $R\subseteq \kappa^n$ has  property $\mathbf{P}$  if $R\in \mathbf{P}$.
The previous lemma can be generalized to any  property $\mathbf{P}$ of $n$-ary relations, the proof is similar.

\begin{fact}
    To show that a property $\mathbf{P}$ of $n$-ary relations is $\kappa$-Borel, it is enough to find $L$, $\mathcal{A}$, and $\varphi$ such that.
    \begin{enumerate}
        \item $L$ is a signature;
        \item $\mathcal{A}$ is an $L$-structure with domain $\kappa$;
        \item $\varphi\in L_{\kappa^+\omega}$ with the signature $L\cup \{R\}$
        \item $\forall R\subseteq \kappa^2$, $R$ has the property $\mathbf{P}$ if and only if $(\mathcal{A},R)\models \varphi$.
    \end{enumerate}
\end{fact}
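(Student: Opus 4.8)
The plan is to read this Fact as the instance $\lambda=\omega$ of the generalization (mentioned just above) of the preceding lemma, using that $\kappa^{<\omega}=\kappa$. First I would pin down the coding under which the statement ``$\mathbf{P}$ is $\kappa$-Borel'' is being asserted: fix a bijection $\pi_n\colon\kappa^n\to\kappa$ and, for $\eta\in\kappa^\kappa$, let $R_\eta=\{\bar a\in\kappa^n:\eta(\pi_n(\bar a))>0\}$ be the $n$-ary relation on $\kappa$ coded by $\eta$; then ``$\mathbf{P}$ is $\kappa$-Borel'' means that $\{\eta\in\kappa^\kappa:R_\eta\in\mathbf{P}\}$ is $\kappa$-Borel. (Any two reasonable codings of $n$-ary relations differ by a $\kappa$-continuous bijection of $\kappa^\kappa$, so by Fact~\ref{funcion_borel} this notion is coding-independent; I would record this once.) Given $L$, $\mathcal{A}$, $\varphi$ as in (1)--(4), the key observation is that $(\mathcal{A},R_\eta)$ is an $(L\cup\{R\})$-structure with domain $\kappa$ in which the interpretation of $L$ is the fixed structure $\mathcal{A}$, independent of $\eta$, while only the interpretation of $R$ varies with $\eta$. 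This is exactly the situation of the preceding lemma, with the single symbol $R$ in the role of the variably-coded structure $M_\eta$ and $\mathcal{A}$ in the role of the fixed expansion that $M_\eta^*$ adds on top of $M_\eta$. By (4) we have $\{\eta:R_\eta\in\mathbf{P}\}=\{\eta:(\mathcal{A},R_\eta)\models\varphi\}$, so it suffices to show the latter set is $\kappa$-Borel.

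For that I would run the same induction on $\varphi$ as in the proof of the preceding lemma, with $\lambda=\omega$, tracking free-variable assignments $s$ and showing that each $X^s_\psi=\{\eta:(\mathcal{A},R_\eta)\models_s\psi\}$ is $\kappa$-Borel. If $\psi$ is atomic in $L$ (or an equality), its truth value depends only on $\mathcal{A}$ and $s$, so $X^s_\psi$ is $\emptyset$ or $\kappa^\kappa$; if $\psi=R(x_1,\dots,x_n)$, then $(\mathcal{A},R_\eta)\models_s\psi$ iff $\eta(\pi_n(s(x_1),\dots,s(x_n)))>0$, so $X^s_\psi$ is a $\kappa$-union of basic $\kappa$-open sets. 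Negation uses closure under complements; conjunctions and disjunctions in $L_{\kappa^+\omega}$ have length $\le\kappa$, giving $\kappa$-intersections and $\kappa$-unions of $\kappa$-Borel sets; and since quantification in $L_{\kappa^+\omega}$ is finitary, $X^s_{\exists\bar V\psi'}=\bigcup_{\bar a\in\kappa^m}X^{s(\bar a/\bar V)}_{\psi'}$ is a union of $\kappa^m=\kappa$ many $\kappa$-Borel sets, hence $\kappa$-Borel (dually for $\forall$). Since $\varphi$ is a sentence, $\{\eta:(\mathcal{A},R_\eta)\models\varphi\}$ is $\kappa$-Borel, and the general $n$ is handled exactly as the displayed case $n=2$, with $\pi_2$ replaced by $\pi_n$ throughout.

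I do not expect a genuine obstacle; the content is entirely bookkeeping inherited from the preceding lemma. The one step that must be stated with care is the quantifier case: it is precisely the restriction to finitary quantifier blocks ($L_{\kappa^+\omega}$, not $L_{\kappa^+\kappa}$) that keeps the unions at size $\kappa^{<\omega}=\kappa$, so that the induction yields a genuinely $(\kappa,\kappa)$-Borel set rather than merely the $(\kappa,\kappa^{<\kappa})$-Borel set one would get from the preceding lemma at $\lambda=\kappa$. The only other thing worth isolating is the remark that ``$\mathbf{P}$ is $\kappa$-Borel'' is independent of the chosen coding of $n$-ary relations, which reduces directly to Fact~\ref{funcion_borel}.
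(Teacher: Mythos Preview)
Your proposal is correct and follows exactly the approach the paper intends: the paper states just before this Fact that ``the previous lemma can be generalized to any property $\mathbf{P}$ of $n$-ary relations, the proof is similar,'' and gives no further argument. You have simply written out that similar proof, correctly identifying the key point that with $\lambda=\omega$ the quantifier step involves only $\kappa^{<\omega}=\kappa$ many assignments, so the result lands in $(\kappa,\kappa)$-Borel rather than merely $(\kappa,\kappa^{<\lambda})$-Borel.
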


The following surprising lemma is in sharp contrast to the famous result of classical Descriptive Set Theory to the effect that the set of elements of $\omega^\omega$ that code a well-order is $\Pi^1_1$-complete and therefore not Borel.
\begin{lemma}
    If $\mbox{\rm cf}(\kappa)>\omega$, then the property of $R\subseteq \kappa\times\kappa$ being a well-ordering is $\kappa$-Borel.
\end{lemma}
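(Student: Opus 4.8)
The plan is to deduce the statement from the preceding \textbf{Fact}: a property $\mathbf{P}$ of binary relations on $\kappa$ is $\kappa$-Borel as soon as one exhibits a signature $L$, an $L$-structure $\mathcal{A}$ with domain $\kappa$, and an $L_{\kappa^+\omega}$-sentence $\varphi$ over $L\cup\{R\}$ such that, for every $R\subseteq\kappa\times\kappa$, $R$ has $\mathbf{P}$ exactly when $(\mathcal{A},R)\models\varphi$. So the entire task is to write down such $L$, $\mathcal{A}$, $\varphi$ for the property ``$R$ is a well-ordering''.

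First I would take $L=\{c_\alpha\mid\alpha<\kappa\}$ to be a set of $\kappa$ constant symbols and let $\mathcal{A}$ be the structure with universe $\kappa$ interpreting $c_\alpha$ as the ordinal $\alpha$; its only function is to name each element of $\kappa$. Then I would set $\varphi=\varphi_{\mathrm{lin}}\wedge\varphi_{\mathrm{wf}}$, where $\varphi_{\mathrm{lin}}$ is the usual finite first-order sentence saying that $R$ is a linear ordering of $\kappa$ (if ``well-ordering'' is understood as ``well-orders its field'', simply drop totality from $\varphi_{\mathrm{lin}}$; nothing else is affected), and
\[
\varphi_{\mathrm{wf}}\ :=\ \bigwedge_{g\in{}^{\omega}\kappa}\ \neg\Bigl(\bigwedge_{n<\omega}\bigl(c_{g(n+1)}\mathrel{R}c_{g(n)}\ \wedge\ c_{g(n+1)}\neq c_{g(n)}\bigr)\Bigr),
\]
which expresses that no $g\colon\omega\to\kappa$ enumerates a strictly $R$-descending sequence. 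Pushing the negations inward makes each conjunct of $\varphi_{\mathrm{wf}}$ a countable disjunction of atomic formulas, so $\varphi$ is an $L_{\kappa^+\omega}$-sentence over $L\cup\{R\}$ as soon as the outer conjunction has length $<\kappa^+$, i.e.\ as soon as $|{}^{\omega}\kappa|=\kappa^{\omega}\le\kappa$. This is the one place the hypothesis is needed: under the standing assumption $\kappa=\kappa^\omega$ of this section --- which in particular forces $\mathrm{cf}(\kappa)>\omega$ by K\"onig's theorem --- both this conjunction and $L$ have size $\kappa<\kappa^+$, so $\varphi$ is legitimate.

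It then remains to check that $\varphi$ works. On the one hand $(\mathcal{A},R)\models\varphi$ says exactly that $R$ is a linear ordering with no strictly $R$-descending $\omega$-sequence. On the other hand, for a linear ordering this is equivalent to being a well-ordering: a strictly descending $\omega$-chain is obviously ill-founded, and conversely if some nonempty $S\subseteq\mathrm{field}(R)$ had no $R$-least element then dependent choice would produce a strictly descending $\omega$-chain in $S$, falsifying $\varphi_{\mathrm{wf}}$. Hence $(\mathcal{A},R)\models\varphi$ holds precisely when $R$ is a well-ordering, and the \textbf{Fact} yields that this property is $\kappa$-Borel.

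The only genuine content is the bookkeeping above: the \emph{a priori} $\Pi^1_1$-style clause ``there is no descending $\omega$-sequence'' is turned into an honest $\kappa$-sized conjunction simply because there are only $\kappa$ candidate sequences $g\in{}^{\omega}\kappa$ --- which is exactly what fails when $\mathrm{cf}(\kappa)=\omega$, since then $\kappa^\omega>\kappa$. Everything else is routine: the finite axiomatization of linear orders, the remark that an atomic $L\cup\{R\}$-formula with constants defines a basic $\kappa$-open set (or one of $\emptyset$, $\kappa^\kappa$), and the induction on formulas underlying the proof of the \textbf{Fact}.
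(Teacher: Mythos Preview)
Your argument is correct but it does not prove the lemma as stated: the hypothesis of the lemma is merely $\cf(\kappa)>\omega$, whereas your conjunction over ${}^{\omega}\kappa$ needs $\kappa^{\omega}=\kappa$. You are right that $\kappa^{\omega}=\kappa$ is a standing assumption elsewhere in the section, but the lemma is deliberately stated under the weaker hypothesis, and the paper's own proof uses only $\cf(\kappa)>\omega$. For instance, with $\kappa=\aleph_1$ and $2^{\aleph_0}>\aleph_1$ your sentence $\varphi_{\mathrm{wf}}$ is not in $L_{\kappa^{+}\omega}$, yet the lemma still applies.

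The paper's route is genuinely different. Instead of enumerating all $\omega$-sequences, it introduces unary predicates $S_\alpha$ for the initial segments $\{\beta:\beta<\alpha\}$ and defines, by recursion on $\beta<\kappa$, formulas $\varphi^{\alpha}_{\beta}(x)$ expressing ``below $x$ there is an $R$-descending chain of length $\beta$ inside $S_\alpha$''. The sentence $\varphi:=\bigvee_{\alpha<\kappa}\bigwedge_{\gamma<\kappa}\exists x\,(S_\alpha(x)\wedge\varphi^{\alpha}_{\gamma}(x))$ then says that some bounded piece $(S_\alpha,R)$ has arbitrarily long descending chains, i.e.\ is ill-founded; here $\cf(\kappa)>\omega$ is used only to guarantee that any descending $\omega$-sequence lies inside some $S_\alpha$. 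All conjunctions and disjunctions have size $\le\kappa$, so this is an honest $L_{\kappa^{+}\omega}$-sentence without any appeal to $\kappa^{\omega}=\kappa$.

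In short: your approach is more direct and transparent, but trades generality for simplicity by consuming the stronger cardinal-arithmetic assumption; the paper's rank-based construction is a bit more work but isolates exactly the hypothesis $\cf(\kappa)>\omega$, which is the point of the lemma (and of the contrast drawn with the classical $\Pi^1_1$-completeness of well-orderings on $\omega^{\omega}$).
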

\begin{cproof} We use Fact 5.2.
    Let $L=\{(S_\alpha)_{\alpha<\kappa}\}$, $\mathcal{A}=(\kappa,(S_\alpha)_{\alpha<\kappa})$, where the interpretation of $S_\alpha$ is $\{\beta<\kappa\mid \beta<\alpha\}$. Let $R$ be a relational symbol interpreted as a linear order. For all $\alpha,\beta<\kappa$ and $i<\omega$, we define $\varphi^\alpha_\beta(x_i)$ in a recursive way as follows:
    \begin{itemize}
        \item $\varphi^\alpha_0(x_i):= ``x_i=x_i"$,
        \item if $\beta>0$, then $\varphi^\alpha_\beta(x_i):=\bigwedge_{\gamma_{i+1}<\beta}\exists x_{i+1}(S_\alpha(x_{i+1})\wedge (x_{i+1}\ R\ x_i)\ \wedge\varphi^\alpha_{\gamma_{i+1}}(x_{i+1}))$.
    \end{itemize}
    Let $\varphi:=\bigvee_{\alpha<\kappa}\bigwedge_{\gamma_0<\kappa}\exists x_0(S_\alpha(x_0)\wedge \varphi^\alpha_{\gamma_0}(x_0))$.
    Notice that $(\kappa,R)$ is a well-order if and only if for all $\alpha<\kappa$, $(\alpha,R\restriction S_\alpha)$ is a well-order.
    \begin{claim}
        $(\kappa,R)$ is not a well-order if and only if $(\kappa,R,(S_\alpha)_{\alpha<\kappa})\models \varphi$.
    \end{claim}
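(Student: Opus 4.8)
The plan is to prove the biconditional by localizing the well-foundedness of $R$ to the restrictions $R\restriction S_\alpha$ and then analyzing the auxiliary formulas $\varphi^\alpha_\beta$ through a rank computation. Write $\psi_\alpha:=\bigwedge_{\gamma_0<\kappa}\exists x_0(S_\alpha(x_0)\wedge\varphi^\alpha_{\gamma_0}(x_0))$, so that $\varphi=\bigvee_{\alpha<\kappa}\psi_\alpha$, and for $\alpha<\kappa$ put $R_\alpha:=R\cap(S_\alpha\times S_\alpha)$, a linear order on the set $S_\alpha$, which has size $|\alpha|<\kappa$. \emph{Step 1 (localization).} Since $R$ is interpreted as a linear order, $(\kappa,R)$ fails to be a well-order iff there is an $R$-descending $\omega$-sequence, i.e.\ $a_0,a_1,\dots$ with $a_{n+1}\mathrel{R}a_n$ for all $n$. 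This is where $\cf(\kappa)>\omega$ enters: given such a sequence, $\alpha:=(\sup_n a_n)+1<\kappa$ and every $a_n$ lies in $S_\alpha$, so $R_\alpha$ is ill-founded; conversely, any $R$-descending $\omega$-sequence contained in some $S_\alpha$ is also one in $\kappa$. Hence it suffices to prove, for each fixed $\alpha<\kappa$, that $(\mathcal{A},R)\models\psi_\alpha$ iff $R_\alpha$ is ill-founded.

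\emph{Step 2 (the $\varphi^\alpha_\beta$ compute rank in $R_\alpha$).} First I would observe that $\varphi^\alpha_\beta(x)$ is decreasing in $\beta$ (larger $\beta$ gives more conjuncts), so for $\beta=\gamma+1$ one has $\varphi^\alpha_{\gamma+1}(x)\equiv\exists x'(S_\alpha(x')\wedge x'\mathrel{R}x\wedge\varphi^\alpha_\gamma(x'))$, and for limit $\beta$, $\varphi^\alpha_\beta(x)\equiv\bigwedge_{\gamma<\beta}\varphi^\alpha_\gamma(x)$. Using these I claim: if $R_\alpha$ is well-founded, with rank function $rk_\alpha$, then for all $a\in S_\alpha$ and $\beta<\kappa$, $(\mathcal{A},R)\models\varphi^\alpha_\beta(a)$ iff $rk_\alpha(a)\geq\beta$. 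This is a routine induction on $\beta$: the successor step matches the rank recursion $rk_\alpha(a)=\sup\{rk_\alpha(b)+1\mid b\in S_\alpha,\ b\mathrel{R}a\}$ via the first displayed equivalence, and the limit step is immediate from the second together with the induction hypothesis.

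\emph{Step 3 (conclusion).} If $R_\alpha$ is well-founded it is a well-order of some order type $\sigma_\alpha$; since $|S_\alpha|<\kappa$ and $\kappa$ is a cardinal, $\sigma_\alpha<\kappa$, and $rk_\alpha(a)<\sigma_\alpha$ for every $a\in S_\alpha$. By Step 2 the conjunct $\exists x_0(S_\alpha(x_0)\wedge\varphi^\alpha_{\sigma_\alpha}(x_0))$ of $\psi_\alpha$ is then false, so $(\mathcal{A},R)\not\models\psi_\alpha$. If $R_\alpha$ is ill-founded, fix an $R$-descending $\omega$-sequence $a_0,a_1,\dots$ inside $S_\alpha$; a second induction on $\beta<\kappa$ shows $(\mathcal{A},R)\models\varphi^\alpha_\beta(a_0)$ for all $\beta<\kappa$, where for a conjunct indexed by $\gamma<\beta$ the element $a_1$ serves as witness, applying the induction hypothesis to the shifted chain $a_1,a_2,\dots$. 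Hence $a_0$ witnesses every conjunct of $\psi_\alpha$, so $(\mathcal{A},R)\models\psi_\alpha$. Combining with Step 1: $(\kappa,R)$ is not a well-order iff some $R_\alpha$ is ill-founded iff some $\psi_\alpha$ holds iff $(\mathcal{A},R,(S_\alpha)_{\alpha<\kappa})\models\varphi$.

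\emph{Expected main obstacle.} Nothing here is deep, but the bookkeeping in Step 2 is the delicate point: one must make the monotonicity of $\varphi^\alpha_\bullet$ in its index explicit and use it to rewrite the formula at successor and limit stages so that it aligns precisely with the rank recursion for $R_\alpha$. A small but essential ingredient is the observation that a well-founded linear order on a set of size $<\kappa$ has order type $<\kappa$, which is exactly what produces, in the well-founded case, a single conjunct of $\psi_\alpha$ of index small enough to already fail.
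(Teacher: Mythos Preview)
Your argument is correct and rests on the same idea as the paper's. The paper phrases both directions in terms of the semantic game for the infinitary formula: in the ill-founded case Player~II wins by walking down a fixed $R$-descending chain in $S_\alpha$ (your Step~3, second half), and in the well-founded case Player~I wins by choosing at each stage $\gamma_i=\otp(\{\delta\in S_\alpha:\delta\,R\,a_i\},R)$, which is exactly your rank function $rk_\alpha(a_i)$. Your Step~2 replaces this game description by the explicit inductive characterization $\varphi^\alpha_\beta(a)\Leftrightarrow rk_\alpha(a)\geq\beta$, making the successor and limit behaviour visible at the level of the formula rather than at the level of strategies; this is a cleaner bookkeeping device but not a different argument. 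The localization in Step~1 and the use of $\cf(\kappa)>\omega$ are identical to the paper's.
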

    \begin{cproof}
        $\Rightarrow$) Suppose $(\kappa,R)$ is not a well-order. Then there is $\alpha<\kappa$ such that $(\alpha,R\restriction S_\alpha)$ is not a well-order. So there is a sequence $\langle a_i\mid i<\omega\rangle$ such that for all $i<\omega$, $a_{i+1}Ra_i$. Let us show that for all $\gamma_0<\kappa$, $II$ has a winning strategy for the  semantic game for $$(\kappa,R,(S_\beta)_{\beta<\kappa})\models \exists x_0(S_\alpha(x_0)\wedge \varphi^\alpha_{\gamma_0}(x_0)).$$ 
        In the $i$th-round, $II$ chooses $a_i$ as an interpretation for $x_i$. Since $\langle a_i\mid i<\omega\rangle$ is an infinite descending sequence, $a_i$ is a valid move and $II$ doesn't lose. Thus $(\kappa,R,(S_\beta)_{\alpha<\kappa})\models \varphi$.

        $\Leftarrow$) Suppose that $(\kappa,R)$ is a well-order. Let us show that for all $\alpha<\kappa$, $I$ has a winning strategy for the  semantic game for $$(\kappa,R,(S_\beta)_{\beta<\kappa})\models \bigwedge_{\gamma_0<\kappa} \exists x_0(S_\alpha(x_0)\wedge \varphi^\alpha_{\gamma_0}(x_0)).$$
        Let us describe a winning strategy for $I$. $I$ start by choosing $\gamma_0=otp(S_\alpha,R\restriction S_\alpha)$. Suppose that in the $i$th-round, $I$ has chosen $\langle\gamma_j\mid j\leq i\rangle$, $II$ has chosen $\langle a_j\mid j\leq i\rangle$, and $II$ has not lost. Then $I$ chooses $\gamma_i=otp(A_i,R\restriction A_i)$, where $A_i=\{\delta\in S_\alpha\mid \delta R a_i\}$. 
        To show that this is a winning strategy, notice that $(\alpha,R\restriction S_\alpha)$ is a well-order (since $R$ is a well-order). Thus, no  matter how $II$ plays, there is $j<\omega$ such that in the $i$th-round II has chosen   the $R$-least element $a_j$ of $\alpha$. So, $\gamma_j=\emptyset$ and $II$ cannot choose an interpretation of $x_{j+1}$ without losing. Thus $(\kappa,R,(S_\alpha)_{\alpha<\kappa})\not\models \varphi$.
    \end{cproof}
\end{cproof}

\begin{thm}\label{orbit_thm}
Let $\mathcal{T}$ be a countable $\omega$-stable NDOP shallow $L$-theory. Suppose $\kappa$ is a regular cardinal such that $\kappa^\omega=\kappa$, and let $\eta\in \kappa^\kappa$ be such that $M_\eta\models \mathcal{T}$. Then there is $\varphi\in L^*_{\kappa^+\omega_1}$ such that $M_\xi\cong M_\eta$ if and only if $M_\xi^*\models\varphi$. Hence $\Orb(M_\eta)$ is $\kappa$-Borel. 
\end{thm}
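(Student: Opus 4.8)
The plan is to reduce, by means of the earlier lemma bounding the Borel complexity of $L^*_{\kappa^+\lambda}$-definable subsets of $\kappa^\kappa$, to the construction of a single sentence $\varphi\in L^*_{\kappa^+\omega_1}$, depending on $M_\eta$, such that for every $\xi\in\kappa^\kappa$ we have $M_\xi\cong M_\eta$ if and only if $M^*_\xi\models\varphi$. Granting such a $\varphi$, that lemma applied with $\lambda=\omega_1$ shows that $\Orb(M_\eta)$ is $(\kappa,\kappa^{<\omega_1})$-Borel, and $\kappa^{<\omega_1}=\kappa^\omega=\kappa$ by hypothesis, so $\Orb(M_\eta)$ is $\kappa$-Borel, which is the final assertion. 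Thus the whole task is to produce $\varphi$, and for this I would invoke Shelah's structure theory for $\omega$-stable NDOP shallow theories.

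First I would recall the relevant decomposition apparatus. Because $\mathcal{T}$ is $\omega$-stable and NDOP, every model $N\models\mathcal{T}$ of cardinality $\kappa$ is prime, indeed constructible, over an independent tree $\langle N_s,a_s\mid s\in I\rangle$ of \emph{countable} elementary submodels, where the height of $I$ is bounded by the depth $d$ of $\mathcal{T}$, which is an ordinal $<\omega_1$ since $\mathcal{T}$ is countable and shallow, and where each node of $I$ has at most $\kappa$ immediate successors; moreover two such models are isomorphic if and only if their decomposition trees are isomorphic as labelled trees, the label at $s$ recording the type of $a_s$ over $N_{s^-}$ up to the natural equivalence. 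What matters here is that $\omega$-stability restricts the possible labels to a set of size at most $2^\omega\le\kappa^\omega=\kappa$, each label being definable over its (countable) predecessor model by an $L_{\omega_1\omega}$-formula; hence the isomorphism type of $M_\eta$ is encoded by a tree $\mathbf t$ of height $\le d$ each of whose nodes carries a definable label together with, for every label $q$, a multiplicity cardinal $\theta\le\kappa$ counting the $q$-labelled immediate successors.

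Then I would build $\varphi$ by recursion on $\gamma\le d$. For each admissible label $p$ I define a formula $\psi^\gamma_p(\bar x)$ of $L^*_{\kappa^+\omega_1}$ in a countable tuple of variables $\bar x$ (countable tuples are permitted because $\omega_1>\omega$, and there are only $\kappa^\omega=\kappa$ of them, so every conjunction below has length $\le\kappa$) expressing: ``$\bar x$ enumerates a countable elementary substructure with label $p$ whose decomposition subtree, truncated at height $\gamma$, is isomorphic with the prescribed multiplicities to the corresponding subtree of $\mathbf t$''. At $\gamma=0$ this is read off the $L_{\omega_1\omega}$-diagram of a countable model. At a successor step, $\psi^{\gamma+1}_p(\bar x)$ is the conjunction, over the $\le\kappa$ child labels $q$, of the cardinality assertions ``exactly $\theta_{p,q}$ many countable $\bar y$ extending $\bar x$ appropriately satisfy $\psi^{\gamma}_q(\bar x^{\frown}\bar y)$'' — cardinality quantifiers up to $\kappa$ being available already in $L_{\kappa^+\kappa}$ — together with a clause asserting that $M_\xi$ is constructible, hence prime, over the (definable) union $Z$ of the substructures enumerated so far; here ``constructible over $Z$'' is stated by quantifying over the canonical well-ordering $<^1$ and using the predicates $R^n_\theta$ of $L^+$ to run the construction, which is exactly the point of passing to $L^*$ and to $M^*_\xi$ (note the $L^+$-reduct of $M^*_\xi$ is the same fixed scaffold for every $\xi$). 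At a limit $\gamma$ one conjoins the $\psi^\delta_p$ for $\delta<\gamma$, a conjunction of length $<\omega_1\le\kappa$. Finally put $\varphi:=\psi^{d}_{\langle\rangle}\wedge\bigwedge\mathcal{T}$, a sentence of $L^*_{\kappa^+\omega_1}$ ($\bigwedge\mathcal{T}$ being countable). By the decomposition theorem, $M^*_\xi\models\varphi$ iff $M_\xi\models\mathcal{T}$ and its decomposition tree is isomorphic to $\mathbf t$, iff $M_\xi\cong M_\eta$; the earlier lemma then finishes the proof.

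The main obstacle is the successor step: compressing Shelah's reconstruction theorem into one $L^*_{\kappa^+\omega_1}$-formula of bounded depth. This rests on three points that each need care: that the set of relevant labels is small and each label is $L_{\omega_1\omega}$-definable over its countable predecessor (where $\omega$-stability enters); that the recursion terminates after $d<\omega_1$ steps (shallowness); and that ``$M_\xi$ is prime, equivalently constructible, over the $\kappa$-sized tree-union $Z$'', together with the independence of the reconstructed tree, can be reduced to a local, atomic-type condition over countable parameters and then expressed without any existential quantifier over an object of size $\kappa$ — which is what forces in the auxiliary well-ordering $<^1$ and the predicates $R^n_\theta$ of $L^+$, and where $\kappa^\omega=\kappa$ is used to keep the bookkeeping conjunctions of length $\le\kappa$. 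Carrying out this last verification in full is the delicate, computational part, but it is essentially Shelah's analysis reorganised together with routine $L^*_{\kappa^+\omega_1}$-bookkeeping.
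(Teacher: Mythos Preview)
Your overall architecture is right --- reduce to building one $L^*_{\kappa^+\omega_1}$-sentence via Shelah's decomposition theory, then apply the earlier lemma with $\lambda=\omega_1$ --- and this is exactly what the paper does. But there is a genuine gap at the successor step, and it is precisely where you invoke ``cardinality quantifiers up to $\kappa$ being available already in $L_{\kappa^+\kappa}$''. The target logic is $L^*_{\kappa^+\omega_1}$, not $L^*_{\kappa^+\kappa}$: you may quantify only over \emph{countable} tuples, so ``exactly $\theta$ many'' for $\omega_1\le\theta\le\kappa$ is not directly expressible. This is not a bookkeeping issue; it is the whole difficulty, and your proposal does not address it.

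Relatedly, you have misidentified the role of the extra structure $L^+$. You use the well-orders $<^n$ and the predicates $R^n_\theta$ to say ``$M_\xi$ is constructible over the tree-union $Z$'', but the paper never attempts to express constructibility or primeness. Instead, the well-orders are used exactly to solve the cardinality problem above: one writes a formula $\Psi^i_\alpha(\bar X,x)$ saying ``$x$ is the $<^n$-least tuple such that $\exists\bar Z\,\Phi^\alpha_\eta(\bar Z,\bar X,x)$ holds and $x$ is free over all tuples satisfying $\Psi^{i_0}_{\alpha_0},\ldots,\Psi^{i_k}_{\alpha_k}$ for $(\alpha_j,i_j)<(\alpha,i)$''. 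Each $\Psi^i_\alpha$ has at most one realization, so ``the dimension attached to the label $\alpha$ equals $|Y_\alpha|$'' becomes a $\kappa$-indexed conjunction/disjunction of \emph{first-order} existential statements $\exists x\,\Psi^i_\alpha$, which lives in $L^*_{\kappa^+\omega_1}$. The paper then argues separately (via regularity of the relevant types, Lemma~\ref{Lemma_5-2}) that the dimensions so computed do not depend on the particular choice of well-order, which is what makes the formula invariant under isomorphism. Your constructibility clause would have to be replaced by this mechanism; once that is done, the appeal to ``isomorphic decompositions $\Rightarrow$ isomorphic models'' (a stability-theoretic fact, not something expressed inside $\varphi$) replaces your primeness clause entirely.
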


Let us make some preparations before we prove Theorem \ref{orbit_thm}. From now on in this section, we will work under the assumption that $\mathcal{T}$ is a countable $\omega$-stable NDOP shallow $L$-theory. Let us fix $\eta$ and $\mathcal{T}$ such that satisfy the assumptions of Theorem \ref{orbit_thm}, thus $M_\eta\models \mathcal{T}$. 

Suppose $\mathcal{B}$ and $\mathcal{A}$ are countable such that $\mathcal{B}\preceq\mathcal{A}\preceq M$, where $M$ is a model of $\mathcal{T}$ (not necessary $M_\eta$). 
We will write $tp(a,\mathcal{A})\dashv\mathcal{B}$ for models $\mathcal{A}$ and $\mathcal{B}$, when $tp(a,\mathcal{A})$ is orthogonal to $\mathcal{B}$. We will denote by $\mathcal{B}[a]$ the primary model over $\mathcal{B}a$.

Given a tree $T$, for all $t\in T$ we define $ht_T(t)$ as the order type of $\{u\in T\mid u< t\}$.
 We say that $T'\subseteq T$ is a subtree of $T$ if $T'$ is a substructure of $T$, $T'$ has a root, and for all $t<t'<t''$ in $T$, if $t,t''\in T'$, then $t'\in T'$. 

 Recall the rank of a tree $T$ without infinite branches, $rk(T)$ from Definition \ref{rank_tree}.

\begin{defn}\label{good_labelled_tree_empt}
    We say that $(T,a_t,\mathcal{A}_t)_{t\in T}$ is a good labelled tree of $M$ (we drop $M$ when it is clear from the context) if the following holds:
    \begin{enumerate}
        \item $T$ is a subtree of $(\kappa^+)^{<\omega}$.
        \item If $t^\frown i\in T$, and $j<i$, then $t^\frown j\in T$.
        \item If $ht_T(t)=0$, then $\mathcal{A}_{t}\prec M$ is primary over the empty set and $a_t\in\mathcal{A}_{t}$.
        \item If $t\in T$ and $ht_T(t)\ge 1$, then $\mathcal{A}_t =\mathcal{A}_{t^-}[a_t]\preceq M$ and $a_t\not \subseteq\mathcal{A}_{t^-}$.
        \item If $t\in T$ and $ht_T(t)\ge 2$, then $tp(a_t / \mathcal{A}_{t^-})\dashv \mathcal{A}_{t^{--}}$.
        \item If $t^\frown i\in T$, then $$a_{t^\frown i}\downarrow_{\mathcal{A}_t} \bigcup_{j<i}a_{t^\frown j}.$$
    \end{enumerate}
    We say that $(T,a_t,\mathcal{A}_t)_{t\in T}$ is a good labelled tree over a countable $\mathcal{A}\prec M$ of $M$, if $(T,a_t,\mathcal{A}_t)_{t\in T}$ satisfies (1), (2), (4), (5), and (6), and the following:
\begin{itemize}
    \item [($3'$)] If $ht_T(t)=0$, then $\mathcal{A}_{t}=\mathcal{A}$ and $a_t\in \mathcal{A}$.
\end{itemize}
\end{defn}

\begin{defn}\label{good_labelled_tree}
    We say that $(T,a_t,\mathcal{A}_t)_{t\in T}$ is a good labelled tree over $(\mathcal{B},\mathcal{A})$, $\mathcal{B}\prec\mathcal{A}\prec M$ countable, of $M$ if the following holds:
    \begin{enumerate}
        \item $T$ is a subtree of $(\kappa^+)^{<\omega}$.
        \item If $t^\frown i\in T$, $ht_T(t)\ge 1$, and $j<i$, then $t^\frown j\in T$.
        \item If $ht_T(t)=0$, then $\mathcal{A}_{t}=\mathcal{B}$, $t$ has a unique immediate successor, $t^+$, $\mathcal{A}_{t^+}=\mathcal{A}$, $a_t\in \mathcal{B}$ and $a_{t^+}\in \mathcal{A}$.
        \item If $t\in T$ and $ht_T(t)\ge 1$, then $\mathcal{A}_t =\mathcal{A}_{t^-}[a_t]\preceq M$ and $a_t\not \subseteq\mathcal{A}_{t^-}$.
        \item If $t\in T$ and $ht_T(t)\ge 2$, then $tp(a_t / \mathcal{A}_{t^-})\dashv \mathcal{A}_{t^{--}}$.
        \item If $t^\frown i\in T$, then $$a_{t^\frown i}\downarrow_{\mathcal{A}_t} \bigcup_{j<i}a_{t^\frown j}.$$
    \end{enumerate}
\end{defn}

Notice that since $\mathcal{A}_t =\mathcal{A}_{t^-}[a_t]$, $a_t\triangleright_{\mathcal{A}_{t^-}}\mathcal{A}_t$.

\begin{defn}
        A good labelled tree $(T, a_t, \mathcal{A}_t)_{t\in T}$ is maximal if for all $t\in T$, there is no $a\in M$ such that $a\downarrow_{\mathcal{A}_t}\bigcup\{\mathcal{A}_{t^\frown i}\mid t^\frown i\in T\}$, $tp(a/\mathcal{A}_{t})$ is non-algebraic, and in case $ht_T(t)>0$, $tp(a/\mathcal{A}_{t})\dashv\mathcal{A}_{t^{-}}$ (i.e. there is no proper extension of the good labelled tree). A maximal good labelled tree over $\mathcal{A}$, or over $(\mathcal{B},\mathcal{A})$, is defined in a similar way, following the definition above.

\end{defn}

\begin{defn}
    Suppose $(T,a_t,\mathcal{A}_t)_{t\in T}$ is a good labelled tree and $(T',a'_t,\mathcal{A}_t')_{t\in T'}$ is a good labelled tree. Then $(f,g)$ is an isomorphism  from $(T,a_t,\mathcal{A}_t)_{t\in T}$ to $(T',a'_t,\mathcal{A}_t')_{t\in T'}$ if $f$ is an isomorphism from $(T,<)$ to $(T',<')$ and $g:\cup_{t\in T}\mathcal{A}_t\rightarrow\cup_{t\in T'}\mathcal{A}'_t$ is such that for all $t\in T$, $g\restriction \mathcal{A}_t$ is an isomorphism from $\mathcal{A}_t$ to $\mathcal{A}'_{f(t)}$ and for all $t\in T$, $ht_T(t)\ge 1$, $g(a_t)=a'_{f(t)}$.

\end{defn}

If $(T,a_t,\mathcal{A}_t)_{t\in T}$ and $(T',a'_t,\mathcal{A}_t')_{t\in T'}$ are good labelled trees over $\mathcal{A}_r$, $r$ the root of $T$ and $T'$ ($T$ and $T'$ have the same root). Then we say that they are isomorphic over $\mathcal{A}_r$ if there is an isomorphism $(f,g)$ such that $g\restriction \mathcal{A}_r=id_{\mathcal{A}_r}$.


\begin{fact}
    If $b\in \mathcal{A}[a]$ and $tp(a/\mathcal{A})\dashv \mathcal{B}$, then $tp(b/\mathcal{A})\dashv\mathcal{B}$. 
\end{fact}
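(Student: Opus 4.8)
The plan is to deduce the Fact from two standard properties of the $\omega$-stable theory $\mathcal{T}$: first, that a primary model over $\mathcal{A}a$ is dominated by $a$ over $\mathcal{A}$, i.e.\ $a\triangleright_{\mathcal{A}}\mathcal{A}[a]$ (this is exactly the remark recorded just above, with $\mathcal{A}$ in the role of $\mathcal{A}_{t^-}$ and $\mathcal{A}[a]$ in the role of $\mathcal{A}_t$); and second, that domination preserves orthogonality to a (countable elementary) submodel. Given these, the argument is short: since $b\in\mathcal{A}[a]$ and $a\triangleright_{\mathcal{A}}\mathcal{A}[a]$, monotonicity of forking gives $a\triangleright_{\mathcal{A}}b$ (whenever $\mathcal{A}[a]\downarrow_{\mathcal{A}}C$ we also have $b\downarrow_{\mathcal{A}}C$, as $b$ lies in $\mathcal{A}[a]$), and then $tp(a/\mathcal{A})\dashv\mathcal{B}$ forces $tp(b/\mathcal{A})\dashv\mathcal{B}$.

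To make the second property explicit I would unwind the definition of orthogonality to a model. Working inside a sufficiently saturated elementary extension and using the usual conventions about nonforking extensions, $tp(a/\mathcal{A})\dashv\mathcal{B}$ is equivalent to: $a\downarrow_{\mathcal{A}}c$ for every tuple $c$ with $tp(c/\mathcal{A})$ not forking over $\mathcal{B}$. So fix such a $c$. Orthogonality of $tp(a/\mathcal{A})$ to $\mathcal{B}$ gives $a\downarrow_{\mathcal{A}}c$, and $a\triangleright_{\mathcal{A}}b$ then yields $b\downarrow_{\mathcal{A}}c$. Since $c$ was arbitrary, $tp(b/\mathcal{A})\dashv\mathcal{B}$, which is what we wanted.

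I do not expect any genuine difficulty here: the statement is a routine manipulation of domination and orthogonality, both of which are well behaved because $\mathcal{T}$ is $\omega$-stable. The only point that deserves a moment's care is citing the correct form of ``orthogonality to a model'' so that the domination step applies word for word; alternatively one may simply quote the standard lemma that $a\triangleright_{\mathcal{A}}b$ together with $tp(a/\mathcal{A})\dashv\mathcal{B}$ implies $tp(b/\mathcal{A})\dashv\mathcal{B}$ (see e.g.\ Shelah's classification theory).
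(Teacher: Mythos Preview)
Your proposal is correct and rests on the same mechanism as the paper's proof: both arguments use $a\triangleright_{\mathcal{A}} b$ (inherited from $a\triangleright_{\mathcal{A}}\mathcal{A}[a]$) to push independence from $a$ down to $b$. The only difference is in how orthogonality is unpacked. The paper argues by contradiction from the base-extension form of non-orthogonality, taking witnesses $c,d$ with $c\downarrow_{\mathcal{A}} b$, $d\downarrow_{\mathcal{B}}\mathcal{A}c$, $b\not\downarrow_{\mathcal{A}c} d$, arranging $c\downarrow_{\mathcal{A}} a$, using $tp(a/\mathcal{A})\dashv\mathcal{B}$ to get $a\downarrow_{\mathcal{A}} cd$, and then applying domination to reach $b\downarrow_{\mathcal{A}c} d$, a contradiction. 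You instead invoke the equivalent characterization of $\dashv\mathcal{B}$ over the model $\mathcal{A}$ (every $c$ with $c\downarrow_{\mathcal{B}}\mathcal{A}$ is independent from a realization), which is legitimate here since types over $\mathcal{A}$ are stationary, and then the domination step finishes directly. Your route is a bit slicker; the paper's is more self-contained in that it does not cite the over-a-model characterization but derives what is needed from the definition.
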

\begin{cproof}
    Let us assume, for sake of contradiction, that $b\in \mathcal{A}[a]$, $tp(a/\mathcal{A})\dashv \mathcal{B}$, and $tp(b/\mathcal{A})$ is not orthogonal to $\mathcal{B}$. Therefore, there are $c$ and $d$ such that $c\downarrow_{\mathcal{A}}b$, $d\downarrow_{\mathcal{B}}\mathcal{A}c$, and $b\not\downarrow_{\mathcal{A}c}d$. Without loss of generality, we can choose $c$ such that $c\downarrow_{\mathcal{A}b}a$. Since $c\downarrow_\mathcal{A} b$, by transitivity we conclude $c\downarrow_\mathcal{A} a$. On the other hand $tp(a/\mathcal{A})\dashv\mathcal{B}$ and $d\downarrow_{\mathcal{B}}\mathcal{A}c$, thus $a\downarrow_{\mathcal{A}c}d$. Therefore, $a\downarrow_{\mathcal{A}}cd$. Since $a\triangleright_\mathcal{A}b$, $b\downarrow_{\mathcal{A}}cd$. This implies $b\downarrow_{\mathcal{A}c}d$, a contradiction.
\end{cproof}

For all $t\in T$ we will denote by $T_{\ge t}$ the subtree $\{a\in T\mid a\ge t\}$. Let $t\in T$,
\begin{itemize}
    \item If $t^-$ exists, then we define $T_t$ as the subtree $T_{\ge t}\cup \{t^-\}$;
    \item If $t^-$ does not exist, then we define $T_t$ as the subtree $T_{\ge t}$. 
\end{itemize}

\begin{defn}\label{QD_over}
    We say that a good labelled tree $(T,a_t,\mathcal{A}_t)_{t\in T}$ of $M$ is a QD (quasi-decomposition) of $M$ (we drop $M$ when it is clear from the context) if the following holds:
    \begin{enumerate}
        \item If $t^\frown i\in T$, then $MR(tp(a_{t^\frown i}/\mathcal{A}_{t}))$ is the least possible, i.e. there is no $a\in M$ such that it satisfies that $a\not\subseteq \mathcal{A}_t$, Definition \ref{good_labelled_tree_empt} (5) and (6), and $MR(tp(a/\mathcal{A}_{t}))<MR(tp(a_{t^\frown i}/\mathcal{A}_{t}))$.
        For all $t\in T$ such that $ht_T(a_t)\ge 1$, we write $p_t$ for $tp(a_t/\mathcal{A}_{t^-})$.
        \item If $t^\frown i\in T$, $k<j<i$, and $p_{t^\frown k}=p_{t^\frown i}$, then $p_{t^\frown j}= p_{t^\frown i}$.
        
        If $t^\frown i\in T$ and for all $j<i$, $p_{t^\frown i}\neq p_{t^\frown j}$, then for all $j<i$, there is no $a\in M$ such that $tp(a/\mathcal{A}_t)=p_j$ and $$a\downarrow_{\mathcal{A}_t} \bigcup_{\beta<i}a_{t^\frown \beta}.$$
         
        \item If $t^\frown i\in T$, $k<j<i$, and $(T_{t^\frown i}, a_u, \mathcal{A}_u)_{u\in T_{t^\frown i}}$ is a maximal good labelled tree over $(\mathcal{A}_t,\mathcal{A}_{t^\frown i})$ and $(T_{t^\frown i}, a_u, \mathcal{A}_u)_{u\in T_{t^\frown i}}$ is isomorphic to $(T_{t^\frown k}, a_u, \mathcal{A}_u)_{u\in T_{t^\frown k}}$ over $\mathcal{A}_t$, then $(T_{t^\frown j}, a_u, \mathcal{A}_u)_{u\in T_{t^\frown j}}$ is isomorphic to $(T_{t^\frown i}, a_u, \mathcal{A}_u)_{u\in T_{t^\frown i}}$ over $\mathcal{A}_t$.
        
        

If $t^\frown i\in T$, is such that $(T_{t^\frown i}, a_u, \mathcal{A}_u)_{u\in T_{t^\frown i}}$ is a maximal good labelled tree over $(\mathcal{A}_t,\mathcal{A}_{t^\frown i})$ and for all $j<i$, $(T_{t^\frown j}, a_u, \mathcal{A}_u)_{u\in T_{t^\frown j}}$ is not isomorphic to $(T_{t^\frown i}, a_u, \mathcal{A}_u)_{u\in T_{t^\frown i}}$ over  $\mathcal{A}_t$. Then for all $j<i$, there is no good labelled tree $(T', a'_u, \mathcal{A}'_u)_{u\in T'}$ over $(\mathcal{A}'_r, \mathcal{A}'_{r^+})$, where $r$ is the root of $T'$, such that it is a maximal labelled tree over $(\mathcal{A}'_r, \mathcal{A}'_{r^+})$, $\mathcal{A}'_r=\mathcal{A}_t$, and $(T', a'_u, \mathcal{A}'_u)_{u\in T'}$ is isomorphic to $(T_{t^\frown j}, a_u, \mathcal{A}_u)_{u\in T_{t^\frown j}}$ over $\mathcal{A}_t$ and $$a_{r^+}\downarrow_{\mathcal{A}_t} \bigcup_{k<i}a_{t^\frown k}.$$

        

        \item If $t^\frown i\in T$, $j<i$, then $(T_{t^\frown j},a_u,\mathcal{A}_u)_{u\in T_{t^\frown j}}$ is a maximal labelled tree over $(\mathcal{A}_t,\mathcal{A}_{t^\frown j})$.
    \end{enumerate}
\end{defn}

We define a QD over $\mathcal{A}$ and QD over $(\mathcal{B},\mathcal{A})$, in a similar way.

We  denote by $Y^i_t$ the set of those $j<\kappa^+$ such that $(T_{t^\frown i}, a_u, \mathcal{A}_u)_{u\in T_{t^\frown i}}$ is isomorphic to $(T_{t^\frown j}, a_u, \mathcal{A}_u)_{u\in T_{t^\frown j}}$ over $\mathcal{A}_t$. 
Notice that the $Y^i_t$'s are intervals of $\kappa^+$.

\begin{defn}
    A QD $(T, a_t, \mathcal{A}_t)_{t\in T}$ is maximal if $(T, a_t, \mathcal{A}_t)_{t\in T}$ is a maximal good labelled tree. In a similar way, we define maximal QD over $\mathcal{A}$ and maximal QD over $(\mathcal{B},\mathcal{A})$.
\end{defn}

\begin{lemma}\label{5.12}
    $M_\eta$ has a maximal  QD.
\end{lemma}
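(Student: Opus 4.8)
**The plan is to build a maximal QD of $M_\eta$ by a standard greedy/transfinite construction along the tree $(\kappa^+)^{<\omega}$, using $\omega$-stability (to guarantee primary models over countable sets plus a parameter exist) and NDOP together with shallowness (to guarantee the construction terminates, i.e. that the tree has no infinite branch).**

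First I would start by fixing a primary model $\mathcal{A}_r \prec M_\eta$ over the empty set (which exists since $\mathcal{T}$ is $\omega$-stable) and enumerating its elements to define $a_r$; this handles level $0$. Then, proceeding recursively on $t\in T$ already placed in the tree, I would list, up to isomorphism over $\mathcal{A}_t$, the maximal good labelled trees over pairs $(\mathcal{A}_t,\mathcal{A}_t[a])$ where $a\in M_\eta\setminus\mathcal{A}_t$ realizes a non-algebraic type over $\mathcal{A}_t$ that is orthogonal to $\mathcal{A}_{t^-}$ (when $ht_T(t)>0$) and has least possible Morley rank, and which is $\downarrow_{\mathcal{A}_t}$-independent from the previously chosen successors. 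At each successor index $i$ I would choose $a_{t^\frown i}$ and $\mathcal{A}_{t^\frown i}=\mathcal{A}_t[a_{t^\frown i}]$ so as to satisfy Definition~\ref{good_labelled_tree_empt}~(5)--(6) and the four clauses of Definition~\ref{QD_over}: clause (1) by minimizing Morley rank, clause (2) by grouping successors realizing a common type $p_t$ into an interval (and stopping once no further realization independent from what is chosen exists), clauses (3)--(4) by grouping isomorphism-types of the maximal good labelled subtrees hanging below each successor into intervals $Y^i_t$ and only adding a new isomorphism-type when no realization of an already-listed type is still available independently. Because each $\mathcal{A}_t$ is countable, there are at most $2^{\aleph_0}\le\kappa^+$ many relevant types and subtree-isomorphism-types, so indexing the successors by ordinals $<\kappa^+$ suffices and clause (1) of Definition~\ref{good_labelled_tree_empt} ($T\subseteq(\kappa^+)^{<\omega}$) is respected; Fact~5.7-style arguments (the fact that $b\in\mathcal{A}[a]$, $tp(a/\mathcal{A})\dashv\mathcal{B}$ $\Rightarrow$ $tp(b/\mathcal{A})\dashv\mathcal{B}$) keep the orthogonality conditions coherent along the tree.

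The main obstacle, and the place where NDOP and shallowness are essential, is to see that this process actually produces a \emph{tree}, i.e. a good labelled tree, which by definition must have no infinite branch, so that the recursion on $t$ is well-founded and the resulting object is legitimately indexed by $(\kappa^+)^{<\omega}$ rather than running off to infinite depth. NDOP is what guarantees that, once we pass below a successor $t^\frown i$, the further decomposition only depends on (and is orthogonal to) $\mathcal{A}_t$ and not on the siblings — this is exactly clause (5) of Definition~\ref{good_labelled_tree_empt} and is what makes the ``maximal good labelled tree over $(\mathcal{A}_t,\mathcal{A}_{t^\frown i})$'' a well-defined local invariant, so that the global tree decomposes as an amalgam of these local pieces in the manner encoded by the $Y^i_t$ intervals. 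Shallowness then bounds the depth: an infinite branch $t_0<t_1<t_2<\cdots$ would give an infinite $\dashv$-orthogonality chain $\mathcal{A}_{t_0}\subsetneq\mathcal{A}_{t_1}\subsetneq\cdots$ with $tp(a_{t_{n+1}}/\mathcal{A}_{t_n})\dashv\mathcal{A}_{t_{n-1}}$, contradicting the fact that shallow theories have no infinite sequence of this kind (the depth rank of the theory is well-defined, i.e. an ordinal). Once the tree has no infinite branch, one checks maximality by the very design of the construction: at each $t$ we kept adding successors (indexed through $\kappa^+$) until there was no $a\in M_\eta$ with $a\downarrow_{\mathcal{A}_t}\bigcup\{\mathcal{A}_{t^\frown i}\mid t^\frown i\in T\}$, $tp(a/\mathcal{A}_t)$ non-algebraic and (for $ht_T(t)>0$) orthogonal to $\mathcal{A}_{t^-}$ — which is precisely the maximality requirement.

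Finally I would remark that the whole construction is carried out inside the fixed model $M_\eta\models\mathcal{T}$, so no homogeneity or saturation of $M_\eta$ beyond what $\omega$-stability provides (existence of primary/prime models over countable sets with a parameter, and that such primary models can be taken inside $M_\eta$) is needed; the cardinality bound $|T|\le\kappa$ is not asserted in the statement of Lemma~\ref{5.12} and is not needed here, only the structural properties (1)--(6) of Definition~\ref{good_labelled_tree_empt} together with (1)--(4) of Definition~\ref{QD_over} and maximality, all of which are secured by the greedy choices above together with NDOP (coherence of the local decompositions) and shallowness (well-foundedness of the tree).
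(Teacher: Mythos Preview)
Your approach is essentially the same as the paper's: a transfinite greedy construction that extends the tree node by node (or subtree by subtree) until no further extension is possible. The paper's proof carries this out more mechanically, picking at each stage a node $t$ of \emph{maximal height} where the subtree is not yet locally maximal and then distinguishing three cases corresponding to clauses (1)--(4) of Definition~\ref{QD_over} (add an isomorphic copy of an already-seen maximal subtree; add a realization of an already-seen type; or start a fresh type of least Morley rank), taking unions at limit stages.

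Where your proposal differs is in emphasis: you make NDOP and shallowness central to the argument, whereas the paper's proof of Lemma~\ref{5.12} does not invoke either explicitly. The existence of a maximal QD is treated there as a routine recursion; no verification that the tree has no infinite branch is given in this proof, and indeed the definition of QD does not require well-foundedness beyond $T\subseteq(\kappa^+)^{<\omega}$. Shallowness is used only later, in the proof of Theorem~\ref{orbit_thm}, where the induction on $rk(t)$ needs the tree to be well-founded; NDOP enters through the subsequent Corollary identifying maximal QDs with decompositions. So your discussion of these hypotheses is correct as background, but for Lemma~\ref{5.12} itself the paper regards them as unnecessary and simply builds the object. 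Your proposal is correct, just more heavily justified than the paper's bare construction.
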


\begin{cproof}
    We will construct a maximal QD by recursion. Let us start by constructing $(T^0, \mathcal{A}_t^0)_{t\in T^0}$. Let $T^0=\{\emptyset\}$, $\mathcal{A}_\emptyset^0\subseteq M_\eta$ a primary model over $\emptyset$, and $a_\emptyset^0\in \mathcal{A}_\emptyset^0$. 
    
    Suppose that $\alpha<\kappa^+$ is such that $(T^\alpha, a_t^\alpha, \mathcal{A}_t^\alpha)_{t\in T^\alpha}$ has been constructed. If $(T^\alpha, a_t^\alpha, \mathcal{A}_t^\alpha)_{t\in T^\alpha}$ is a maximal QD, then we are done. 
    Let us take care of the case when $(T^\alpha, a_t^\alpha, \mathcal{A}_t^\alpha)_{t\in T^\alpha}$ is not a maximal QD. 
    Since $(T^\alpha, a_t^\alpha, \mathcal{A}_t^\alpha)_{t\in T^\alpha}$ is not a maximal QD, there is $t\in T^\alpha$ such that $ht_{T^\alpha}(t)>0$ and $(T^\alpha_{t}, a_u^\alpha, \mathcal{A}_u^\alpha)_{u\in T^\alpha_{t}}$ is not a maximal labelled tree over $(\mathcal{A}^\alpha_{t^-},\mathcal{A}^\alpha_t)$, or $ht_{T^\alpha}(t)=0$ and $(T^\alpha_{t}, a_u^\alpha, \mathcal{A}_u^\alpha)_{u\in T^\alpha_{t}}$ is not a maximal labelled tree.
    
    Let us pick $t\in T^\alpha$ such that $ht_{T^\alpha}(t)$ is maximal with such property. Therefore, for all $i<\kappa^+$, if $t^\frown i\in T^\alpha$, then $(T^\alpha_{t^\frown i}, a_u^\alpha, \mathcal{A}_u^\alpha)_{u\in T^\alpha_{t^\frown i}}$ is a maximal labelled tree over $(\mathcal{A}^\alpha_{t},\mathcal{A}^\alpha_{t^\frown i})$. Let $k= \min \{i<\kappa^+\mid t^\frown i\notin T^\alpha\}$. By Definition \ref{QD_over} we will have to deal with three cases.
    \begin{enumerate}
        \item There are $j<k$ and $(T', a_u, \mathcal{A}_u)_{u\in T'}$ such that for all $k>i>j$, $(T^\alpha_{t^\frown i}, a_u^\alpha, \mathcal{A}_u^\alpha)_{u\in T^\alpha_{t^\frown i}}$ is isomorphic to $(T^\alpha_{t^\frown j}, a_u^\alpha, \mathcal{A}_u^\alpha)_{u\in T^\alpha_{t^\frown j}}$  over $\mathcal{A}^\alpha_t$, and $(T', a_u, \mathcal{A}_u)_{u\in T'}$ is isomorphic to $(T^\alpha_{t^\frown j}, a_u^\alpha, \mathcal{A}_u^\alpha)_{u\in T^\alpha_{t^\frown j}}$  over $\mathcal{A}^\alpha_t$, and it is a maximal good labelled tree over $(\mathcal{A}^\alpha_t, \mathcal{A}_{u^*})$ which is also a QD, where $u^*$ is such that $ht_{T'}(u^*)=1$ and $$a_{u^*}\downarrow_{\mathcal{A}_t}\bigcup_{\beta<k}a^\alpha_{t^\frown \beta}.$$ 
        We define $(T^{\alpha+1}_{t^\frown k}, a_u^{\alpha+1}, \mathcal{A}_u^{\alpha+1})_{u\in T^{\alpha+1}_{t^\frown k}}$ as $(T', a_u, \mathcal{A}_u)_{u\in T'}$ and $$(T^{\alpha+1}, a_u^{\alpha+1}, \mathcal{A}_u^{\alpha+1})_{u\in T^{\alpha+1}}$$ as $$(T^\alpha, a_u^\alpha, \mathcal{A}_u^\alpha)_{u\in T^\alpha}\cup (T^{\alpha+1}_{\ge t^\frown k}, a_u^{\alpha+1}, \mathcal{A}_u^{\alpha+1})_{u\in T^{\alpha+1}_{\ge t^\frown k}}.$$
        
        \item There are no $j<k$ and $(T', a_u, \mathcal{A}_u)_{u\in T'}$ as in the previous case, but there are $j<k$ and $a\not \subseteq \mathcal{A}^\alpha_t$ such that for all $k>i>j$, $p_{t^\frown j}= p_{t^\frown i}$ and $tp(a / \mathcal{A}^{\alpha}_{t})=p_{t^\frown j}$ and $$a\downarrow_{\mathcal{A}^\alpha_t} \bigcup_{\beta<k}a^\alpha_{t^\frown \beta}.$$ 
        We define $a_{t^\frown k}^{\alpha+1}=a$, $\mathcal{A}^{\alpha+1}_{t^\frown k}=\mathcal{A}^\alpha_{t}[a]$, and $T^{\alpha+1}=T^\alpha\cup \{t^\frown k\}$.
        
        \item There are no $j<k$ and $(T', a_u, \mathcal{A}_u)_{u\in T'}$ as in the first case, neither $j<k$ and $a\not \subseteq \mathcal{A}^\alpha_t$ as in the second case. 
        \begin{enumerate}
            \item Case $ht_{T^\alpha}(t)>0$. 

            We choose $a\not \subseteq \mathcal{A}^\alpha_t$ such that $tp(a / \mathcal{A}^\alpha_{t})\dashv \mathcal{A}^\alpha_{t^{-}}$, $tp(a / \mathcal{A}^\alpha_{t})$ is non-algebraic, $$a\downarrow_{\mathcal{A}^\alpha_t} \bigcup_{\beta<k}a^\alpha_{t^\frown \beta},$$ and with the least possible $MR(tp(a/\mathcal{A}^\alpha_{t}))$. We define $a_{t^\frown k}^{\alpha+1}=a$, $\mathcal{A}^{\alpha+1}_{t^\frown k}=\mathcal{A}^\alpha_{t}[a]$, and $T^{\alpha+1}=T^\alpha\cup \{t^\frown k\}$.

            \item Case $ht_{T^\alpha}(t)=0$.

            We choose $a\not \subseteq \mathcal{A}^\alpha_t$ such that $tp(a / \mathcal{A}^\alpha_{t})$ is non-algebraic, $$a\downarrow_{\mathcal{A}^\alpha_t} \bigcup_{\beta<k}a^\alpha_{t^\frown \beta},$$ and with the least possible $MR(tp(a/\mathcal{A}^\alpha_{t}))$. We define $a_{t^\frown k}^{\alpha+1}=a$, $\mathcal{A}^{\alpha+1}_{t^\frown k}=\mathcal{A}^\alpha_{t}[a]$, and $T^{\alpha+1}=T^\alpha\cup \{t^\frown k\}$.
        \end{enumerate}
        
    \end{enumerate}

    Suppose  $\alpha$ is a limit ordinal such that for all $\gamma<\alpha$, $(T^\gamma, a_t^\gamma, \mathcal{A}_t^\gamma)_{t\in T^\gamma}$ has been constructed. We define $(T^\alpha, a_t^\alpha, \mathcal{A}_t^\alpha)_{t\in T^\alpha}$ by 
    \begin{itemize}
        \item $T^\alpha=\bigcup_{\gamma<\alpha}T^\gamma$.
        \item For all $t\in T^\alpha$, $a^\alpha_t=a^\gamma_t$ where $\gamma$ is the least ordinal such that $t\in T^\gamma$. 
        \item For all $t\in T^\alpha$, $\mathcal{A}_t^\alpha=\mathcal{A}_t^\gamma$ where $\gamma$ is the least ordinal such that $t\in T^\gamma$. 
    \end{itemize}
\end{cproof}


Let $(T, a_t, \mathcal{A}_t)_{t\in T}$ be the maximal QD of $M_\eta$ from Lemma \ref{5.12}.

\begin{lemma}\label{Lemma_5-2}
    If $t\in T$, $j\leq i$, $a$, and $b$ are such that $a\models p_{t^\frown i}$, $b\models p_{t^\frown j}$, $\mathcal{A}_t\subseteq A$, $a\downarrow_{\mathcal{A}_t}A$, and $b\not\downarrow_{\mathcal{A}_t}A$. Then $a\downarrow_A b$. In particular $p_{t^\frown i}$ is regular.
\end{lemma}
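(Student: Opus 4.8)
The plan is to argue by contradiction and let the Morley–rank minimality built into the definition of a QD do the work. First, the closing sentence ``$p_{t^\frown i}$ is regular'' is exactly the instance $j=i$ of the displayed implication: for a stationary type $p$ over a model $\mathcal{A}$, the assertion ``for all $A\supseteq\mathcal{A}$ and all $a,b\models p$, if $a\downarrow_{\mathcal{A}}A$ and $b\not\downarrow_{\mathcal{A}}A$ then $a\downarrow_A b$'' is one of the standard equivalent formulations of regularity, so it suffices to prove the displayed statement. Suppose then $a\not\downarrow_A b$. Since $a\downarrow_{\mathcal{A}_t}A$, this says $a\not\downarrow_{\mathcal{A}_t}Ab$, and by finite character of forking I would first pass to a finite $\bar c\subseteq A$ and replace $A$ by $\mathcal{A}_t\bar c$, keeping $a\downarrow_{\mathcal{A}_t}\bar c$, $b\not\downarrow_{\mathcal{A}_t}\bar c$, and $a\not\downarrow_{\mathcal{A}_t\bar c}b$; every hypothesis and the conclusion speak only about forking, so this costs nothing.

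Next I would split on whether $p_{t^\frown i}$ and $p_{t^\frown j}$ are orthogonal. If $p_{t^\frown i}\perp p_{t^\frown j}$, the goal $a\downarrow_{\mathcal{A}_t\bar c}b$ reduces to an orthogonality computation: $\mathcal{A}_t[b]$ is dominated by, hence orthogonality-equivalent over $\mathcal{A}_t$ to, $b$, and $tp(b/\mathcal{A}_t)=p_{t^\frown j}\perp p_{t^\frown i}$, so $tp(\mathcal{A}_t[b]/\mathcal{A}_t\bar c)$ is orthogonal to the nonforking extension of $p_{t^\frown i}$ to $\mathcal{A}_t\bar c$; as $a$ realizes that extension, $a\downarrow_{\mathcal{A}_t\bar c}\mathcal{A}_t[b]$, in particular $a\downarrow_{\mathcal{A}_t\bar c}b$, a contradiction. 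If $p_{t^\frown i}\not\perp p_{t^\frown j}$, the hypothesis $j\le i$ enters through $MR(p_{t^\frown j})\le MR(p_{t^\frown i})$ (the class of tuples admissible in Definition~\ref{good_labelled_tree_empt}(6) at stage $j$ contains the one at stage $i$, since it asks independence from fewer siblings) and through the greedy exhaustion clause Definition~\ref{QD_over}(2). From $a\downarrow_{\mathcal{A}_t}\bar c$ and $a\not\downarrow_{\mathcal{A}_t\bar c}b$ we get $MR(tp(a/\mathcal{A}_t\bar c b))<MR(tp(a/\mathcal{A}_t))=MR(p_{t^\frown i})$, and using the regular-type and weight machinery of the superstable theory $\mathcal{T}$ I would extract an element $d\in\mathcal{A}_t[a]$ with $tp(d/\mathcal{A}_t)$ non-algebraic and regular, $MR(tp(d/\mathcal{A}_t))<MR(p_{t^\frown i})$, and $d\not\downarrow_{\mathcal{A}_t}\bar c b$. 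Then $tp(d/\mathcal{A}_t)\dashv\mathcal{A}_{t^-}$ when $ht_T(t)\ge1$, by the (unlabelled) Fact that orthogonality to a model passes along domination, applied to $a\triangleright_{\mathcal{A}_t}\mathcal{A}_t[a]$ and $p_{t^\frown i}\dashv\mathcal{A}_{t^-}$; and $tp(d/\mathcal{A}_t)\not\perp p_{t^\frown i}$. Since $a_{t^\frown i}\models p_{t^\frown i}$ lies in $M_\eta$, a realization $e$ of $tp(d/\mathcal{A}_t)$ can be found inside $\mathcal{A}_t[a_{t^\frown i}]\subseteq M_\eta$, and $e\downarrow_{\mathcal{A}_t}\bigcup_{\beta<i}a_{t^\frown\beta}$: indeed $a_{t^\frown i}\downarrow_{\mathcal{A}_t}\bigcup_{\beta<i}a_{t^\frown\beta}$ by Definition~\ref{good_labelled_tree_empt}(6) and $a_{t^\frown i}\triangleright_{\mathcal{A}_t}\mathcal{A}_t[a_{t^\frown i}]$, so this independence transfers to $\mathcal{A}_t[a_{t^\frown i}]$ and hence to $e$. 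Thus $e\in M_\eta$ contradicts the Morley–rank minimality of $a_{t^\frown i}$ demanded by Definition~\ref{QD_over}(1).

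The genuine obstacle is the middle of the non-orthogonal case: manufacturing $d$, that is, descending from forking observed over $\mathcal{A}_t\bar c b$ to a \emph{complete}, non-algebraic, regular type over $\mathcal{A}_t$ of \emph{strictly smaller} Morley rank that still forks with $\bar c b$ — and, in the sub-case where $p_{t^\frown i}$ is itself already regular (so that no proper lower-rank component exists), deriving the contradiction instead directly from regularity of $p_{t^\frown i}$ together with $p_{t^\frown i}\not\perp p_{t^\frown j}$ and clause Definition~\ref{QD_over}(2). Pinning down how $j\le i$ and clause~(2) force the rank to drop (or force the desired independence when it does not) is where the real content lies; the reduction to finite $\bar c$, the orthogonal case, the transfer of $\dashv\mathcal{A}_{t^-}$, and locating $e$ inside $M_\eta$ are routine applications of the stability theory of $\omega$-stable theories.
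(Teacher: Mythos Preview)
Your proposal has a genuine gap, and you name it yourself: in the non-orthogonal case you write ``using the regular-type and weight machinery \ldots\ I would extract an element $d\in\mathcal{A}_t[a]$ with $MR(tp(d/\mathcal{A}_t))<MR(p_{t^\frown i})$'' and then concede that ``manufacturing $d$ \ldots\ is where the real content lies''. That \emph{is} the lemma; everything else in your outline is bookkeeping. There is no general weight-theoretic principle that hands you, from $a\not\downarrow_{\mathcal{A}_t\bar c}b$ alone, an element of $\mathcal{A}_t[a]$ whose type over $\mathcal{A}_t$ has strictly smaller Morley rank than $p_{t^\frown i}$ and is still non-algebraic and orthogonal to $\mathcal{A}_{t^-}$. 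Your orthogonal case is also not clean: orthogonality of $p_{t^\frown i}$ and $p_{t^\frown j}$ over $\mathcal{A}_t$ gives independence of their \emph{nonforking} extensions, but here $b\not\downarrow_{\mathcal{A}_t}\bar c$, so $tp(b/\mathcal{A}_t\bar c)$ is a forking extension and the step ``$tp(\mathcal{A}_t[b]/\mathcal{A}_t\bar c)$ is orthogonal to $p_{t^\frown i}\!\mid\!\mathcal{A}_t\bar c$'' needs justification you do not supply.

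The paper does not split into cases and does not invoke abstract regular-type machinery. It works at the level of formulas, exploiting definability of types in the $\omega$-stable setting. After localizing to a finite $c\in\mathcal{A}_t$ over which both $a$ and $b$ are free and over which a degree-$1$ formula $\varphi(y,c)$ isolates $MR(p_{t^\frown j})$, it takes $d\in A$ witnessing $a\not\downarrow_{cd}b$ and $b\not\downarrow_c d$, packages the forking formulas together with the \emph{definition} $d_\chi(x,c)$ of $p_{t^\frown j}$ restricted to the forking formula $\chi$, and forms $\theta(x,z,c)=\exists y\,\bigl(\varphi(y,c)\wedge\psi(x,y,z,c)\wedge\neg d_\chi(x,c)\wedge\chi(y,x,c)\bigr)$. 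Since $\models\theta(d,a,c)$ and $a\downarrow_c d$, stationarity lets one replace $d$ by some $d'\in\mathcal{A}_t$; the existential witness $b'$ can then be found in $\mathcal{A}_t[a]$. The clause $\neg d_\chi(d',c)$ forces $tp(b'/\mathcal{A}_t)\neq p_{t^\frown j}$, while $\varphi(b',c)$ pins the rank, so $MR(tp(b'/\mathcal{A}_t))<MR(p_{t^\frown j})$; and $a\triangleright_{\mathcal{A}_t}b'$ transfers both the independence from earlier siblings and orthogonality to $\mathcal{A}_{t^-}$. That is the missing mechanism: a parameter-pulling definability trick, not weight theory.
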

\begin{cproof}
    Let us assume, for sake of contradiction, that $t\in T$, $j\leq i$, $a$, and $b$ are such that $a\models p_{t^\frown i}$, $b\models p_{t^\frown j}$, $\mathcal{A}_t\subseteq A$, $a\downarrow_{\mathcal{A}_t}A$, and $b\not\downarrow_{\mathcal{A}_t}A$, and $a\not\downarrow_A b$. So there is $d\in A$ such that $a\downarrow_{\mathcal{A}_t}d$, and $b\not\downarrow_{\mathcal{A}_t}d$, and $a\not\downarrow_{\mathcal{A}_t d} b$.
    
    Let us choose $c\in\mathcal{A}_t$ such that $a\downarrow_c \mathcal{A}_t$, $b\downarrow_c\mathcal{A}_t$, $b\not\downarrow_c d$, and there is $\varphi(y,c)\in tp(b,\mathcal{A}_t)$ such that $MR(\varphi(y,c))=MR(tp(b,\mathcal{A}_t))$ and has degree 1, and $a\not\downarrow_c db$. 

    Notice that there is $\psi(x,y,a,c)$ such that $\models \psi(d,b,a,c)$ and $\psi(x,y,a,c)$ forks over $c$. Also, there is $\chi(y,d,c)$ such that $\models\chi(b,d,c)$ and $\chi(b,d,c)$ forks over $c$. Without loss of generality, we may assume that there is $d_\chi(x,c)$ that defines $tp(b,\mathcal{A}_t)\restriction \chi(y,x,c)$. Let us define $$\theta'(x,y,z,c)=\varphi(y,c)\wedge \psi(x,y,z,c)\wedge \neg d_\chi(x,c)\wedge \chi(y,x,c)$$ and $\theta(x,z,c)=\exists y \theta'(x,y,z,c)$. 
    Notice that $\models \theta(d,a,c)$ and $a\downarrow_c d$, therefore there is $d'\in \mathcal{A}_t$ such that $\models \theta(d',a,c)$. Thus there is $b'\in \mathcal{A}_{t^\frown i}$ such that $\models \theta'(d',b',a,c)$, so $\models \psi(d',b',a,c)$. Since $a\not\downarrow_{c}d'b'$, $b'\in \mathcal{A}_{t}[a_{t^\frown i}]\backslash \mathcal{A}_t$ and $a\triangleright_{\mathcal{A}_t} b'$. 
    We conclude that $b'\downarrow_{\mathcal{A}_t}\bigcup_{j<i}a_{t^\frown j}$, $tp(b',\mathcal{A}_t)$ is not algebraic, and $tp(b',\mathcal{A}_t)\dashv\mathcal{A}_{t^-}$.

    Since $\not\models d_\chi (d',c)$, $\chi(y,d',c)\notin tp(b,\mathcal{A}_t)$. 
    On the other hand $\models \chi(b',d',c)$, so $tp(b',\mathcal{A}_t)\neq tp(b,\mathcal{A}_t)$. Since $MR(\varphi(y,c))=MR(tp(b,\mathcal{A}_t))$ and has degree 1, $MR(tp(b',\mathcal{A}_t))< MR(tp(b,\mathcal{A}_t))$ a contradiction with the minimality of $MR(tp(b,\mathcal{A}_t))$.

\end{cproof}

\begin{cor}
Maximal QD are decompositions. In particular, $M_\eta$ and $M_\xi$ have isomorphic QD's if and only if $M_\eta\cong M_\xi$. 
\end{cor}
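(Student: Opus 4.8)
The plan is to prove the two assertions in turn, the second following from the first together with Lemma~\ref{5.12} and standard facts about primary models and nonforking trees of models. Fix a model $M\models\mathcal{T}$ (below $M$ will be $M_\eta$ or $M_\xi$), a maximal QD $(T,a_t,\mathcal{A}_t)_{t\in T}$ of $M$, and put $N=\bigcup_{t\in T}\mathcal{A}_t$. To say that this QD is a \emph{decomposition} is to say that $M$ is primary over $N$: the remaining defining features of a decomposition — the root being primary over $\emptyset$, the relations $\mathcal{A}_t=\mathcal{A}_{t^-}[a_t]$, the orthogonality clause $tp(a_t/\mathcal{A}_{t^-})\dashv\mathcal{A}_{t^{--}}$, and the nonforking of siblings over $\mathcal{A}_t$ — are written directly into Definitions~\ref{good_labelled_tree_empt} and \ref{QD_over}. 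So the first task is to show $M$ is primary over $N$.

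For this I would invoke Shelah's structure theory for $\omega$-stable (in fact superstable) NDOP theories (\cite{MR899084}): such a model is primary over the union of any \emph{maximal} nonforking tree of countable models it contains whose root is primary over $\emptyset$. A maximal QD is exactly such a tree: Definitions~\ref{good_labelled_tree_empt} and \ref{QD_over} encode the nonforking-tree conditions, and ``maximal'' here is maximality as a good labelled tree, i.e.\ there is no $t\in T$ and no $b\in M$ with $tp(b/\mathcal{A}_t)$ non-algebraic, $tp(b/\mathcal{A}_t)\dashv\mathcal{A}_{t^-}$ when $ht_T(t)>0$, and $b\downarrow_{\mathcal{A}_t}\bigcup\{\mathcal{A}_{t^\frown i}\mid t^\frown i\in T\}$ — which is precisely the condition whose failure permits an enlargement of the tree. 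Conversely, if $M$ were not primary over $N$, then picking $b_0\in M$ outside the primary model over $N$ and taking one domination step off a regular type not orthogonal to some $\mathcal{A}_t$ — here the regularity of the types $p_{t^\frown i}$ from Lemma~\ref{Lemma_5-2} is what makes the domination calculus available — yields such a $t$ and a $b$ with, moreover, $b\downarrow_{\mathcal{A}_t}N$, contradicting maximality. Hence $M$ is primary over $N$, so a maximal QD is a decomposition.

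For the equivalence I would argue as follows. If $h\colon M_\eta\to M_\xi$ is an isomorphism and $(T,a_t,\mathcal{A}_t)_{t\in T}$ is a maximal QD of $M_\eta$ (one exists by Lemma~\ref{5.12}), then $(T,h(a_t),h[\mathcal{A}_t])_{t\in T}$ satisfies every clause in the definition of a maximal QD — primariness of the root model, the relations $\mathcal{A}_t=\mathcal{A}_{t^-}[a_t]$, orthogonality, nonforking, minimality of the Morley ranks, the subtree-isomorphism clauses, and maximality as a good labelled tree are all preserved by an isomorphism of the ambient model — so it is a maximal QD of $M_\xi$, isomorphic to the original via $(\mathrm{id}_T,h)$; thus $M_\eta$ and $M_\xi$ have isomorphic (maximal) QDs. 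Conversely, given a maximal QD $(T,a_t,\mathcal{A}_t)_{t\in T}$ of $M_\eta$, a maximal QD $(T',a'_t,\mathcal{A}'_t)_{t\in T'}$ of $M_\xi$, and an isomorphism $(f,g)$ between them, set $N=\bigcup_t\mathcal{A}_t$ and $N'=\bigcup_t\mathcal{A}'_t$. Since these are nonforking trees of models — each $\mathcal{A}_t$ independent over $\mathcal{A}_{t^-}$ from the part of the tree not above $t$ — the isomorphism type of such a tree determines the type of its union over $\emptyset$, so $g$ is a partial elementary map from $M_\eta$ to $M_\xi$ with domain $N$ and range $N'$. By the first part $M_\eta$ is primary over $N$ and $M_\xi$ is primary over $N'$, and since primary models over a set are unique up to isomorphism over that set, $g$ extends to an isomorphism $M_\eta\to M_\xi$; hence $M_\eta\cong M_\xi$.

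I expect the main obstacle to be the structure-theoretic input in the second paragraph — identifying a maximal QD with a maximal nonforking tree of countable models over a primary root and deducing primality of $M$ over $N$, in particular manufacturing, from one element outside the primary model over $N$, a new node-and-element pair that violates maximality — together with the fact used in the last paragraph that an isomorphism of nonforking trees of models is elementary on the union. Lemma~\ref{Lemma_5-2} and the nonforking clauses of Definition~\ref{QD_over} are exactly what make these arguments run; the care lies in matching the precise hypotheses of the cited results (primary models, NDOP, and — for the bounds used elsewhere in this section — shallowness) to the present formulation.
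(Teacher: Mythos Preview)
Your argument is correct and lands at the same conclusion as the paper, but the route differs in emphasis and in which piece of Shelah's machinery is quoted.

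For the first assertion you unpack ``decomposition'' as ``$M$ is primary over $N=\bigcup_t\mathcal{A}_t$'' and argue this from maximality together with the regularity of the $p_{t^\frown i}$ coming out of Lemma~\ref{Lemma_5-2}. The paper is much terser: it simply points to the ``in particular'' clause of Lemma~\ref{Lemma_5-2} (regularity) as what is missing from the good-labelled-tree axioms to make a maximal QD a decomposition, and then, for the isomorphism direction, it extracts a slightly sharper fact from the \emph{proof} of Lemma~\ref{Lemma_5-2}: the types $tp(a_t/\mathcal{A}_{t^-})$ are weakly isolated, hence \emph{strongly} regular. With that, a maximal QD is a $K$-representation in the sense of Shelah's book, and Corollary~2.4 there gives that $M_\eta$ and $M_\xi$ are $K$-minimal over their QDs, so isomorphic QDs force $M_\eta\cong M_\xi$.

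In other words, where you go through ``primary over the union $+$ uniqueness of primary models,'' the paper goes through ``$K$-representation $+$ $K$-minimality,'' quoting Shelah directly rather than reproving the primality step. Your version is more self-contained and makes the mechanism (elementarity of $g$ on $N$, extension by uniqueness of prime models) explicit; the paper's version is a two-line black-box citation, but it uses the strengthened conclusion (strong regularity, not just regularity) that is visible only inside the proof of Lemma~\ref{Lemma_5-2}. Either package of Shelah results suffices in the $\omega$-stable NDOP shallow context.
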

\begin{cproof}
    The first claim follows from the \textit{``in particular"} part of Lemma \ref{Lemma_5-2}. For the second claim, clearly $M_\eta\cong M_\xi$ implies that $M_\eta$ and $M_\xi$ have isomorphic QD's. For the other direction, from the proof of Lemma \ref{Lemma_5-2}, it can be seen that $tp(a_t,\mathcal{A}_{t^-})$ are weakly isolated, thus strongly regular (see \cite{MR918762} page 243). Therefore a maximal QD is a K-representation as in Definition 2.1 \cite{MR918762} page 347. From Corollary 2.4 \cite{MR918762} page 348, $M_\eta$ and $M_\xi$ are K-minimal over their QD's. We conclude that if  $M_\eta$ and $M_\xi$ have isomorphic QD's, then $M_\eta\cong M_\xi$.

\end{cproof}

We will find an $L^*_{\kappa^+\omega_1}$-formula $\Phi_\eta$ such that $M^*_\xi\models \Phi_\eta$ if and only if $M_\xi\cong M_\eta$, i.e. there is a maximal QD of $M_\xi$, which is isomorphic to the maximal one found in Lemma \ref{5.12}.

We prove by induction on $rk(t)$ for $t\in T$, $ht_T(t)>0$, the following:

\textit{There is an $L^*_{\kappa^+\omega_1}$-formula $\Phi^t_\eta(\bar{X},\bar{Y},x)$ such that for all $\xi$, countable $\mathcal{B}\prec \mathcal{A}\prec M_\xi$ and $a\in\mathcal{A}$; $M_\xi\models \Phi^t_\eta(\mathcal{A},\mathcal{B},a)$ if and only if there is a maximal QD, $(T',a'_u,\mathcal{A}'_u)_{u\in T'}$, over $(\mathcal{B},\mathcal{A})$ such that $a'_{u^*}=a$ where $ht_{T'}(u^*)=1$ and $(T',a'_u,\mathcal{A}'_u)_{u\in T'}$ is isomorphic with $(T_t,a_u,\mathcal{A}_u)_{u\in T_t}$ over $\mathcal{A}_t$.}


{\bf Case $rk(t)=0$, i.e. a leaf.} Let $\Phi_\eta^t(\bar{X},\bar{Y}, y)$ be the conjunction of all first order formulas $\varphi(\bar{X},\bar{Y}, y)$ such that $M_\eta\models \varphi(\mathcal{A}_t,\mathcal{A}_{t^-},a_t)$ together with a formula that says that there is no $a$ such that $tp(a,\bar{X})$ is non-algebraic and orthogonal to $\bar{Y}$.


{\bf Case $rk(t)>0$ and $ht_T(t)>0$.} Let $\langle Y_\alpha\rangle_{\alpha<\gamma}$ be an increasing enumeration of the set $\{Y^i_t\mid i<\kappa^+\}$, i.e. if $i<j$, $i\in Y_\alpha$ and $j\in Y_\beta$, then $\alpha\leq \beta$. Notice that there are $\kappa$-many finite tuples in $M_\eta$, thus $\gamma<\kappa^+$.
By the induction assumption, for all $\alpha<\gamma$ and $i\in Y_\alpha$, let $\Phi_\eta^\alpha=\Phi_\eta^{t^\frown i}$ and notice that since for all $i,j\in Y_\alpha$, $(T_{t^\frown i}, a_u, \mathcal{A}_u)_{u\in T_{t^\frown i}}$ and $(T_{t^\frown j}, a_u, \mathcal{A}_u)_{u\in T_{t^\frown j}}$ are isomorphic over $\mathcal{A}_t$, we can construct the formulas $\Phi_\eta^{t^\frown i}$ so that it does not depends on $i\in Y_\alpha$.

We order $\gamma\times \kappa$ lexicographically, i.e. $(\alpha_0,i_0)<(\alpha_1,i_1)$ if either $\alpha_0<\alpha_1$, or $\alpha_0=\alpha_1$ and $i_0<i_1$.
For all $\alpha<\gamma$ and $i<\kappa$ we define $\Psi^i_\alpha (\bar{X},x)$:

\textit{$x$ is the $<^n$-least tuple such that: 
\begin{itemize}
    \item $\exists \bar{Z}\Phi^\alpha_\eta(\bar Z,\bar X, x)$ holds and so 
    \item for all $(\alpha_0,i_0)<(\alpha_1,i_1)<\cdots <(\alpha_k,i_k)<(\alpha,i)$, if $x_0, \ldots, x_k$ are such that for all $j<k$, $x_j$ satisfies $\Psi_{\alpha_j}^{i_j}$, then $x\downarrow_{\bar X}x_0, \ldots, x_k$.
\end{itemize}}

Notice that not necessarily each $\Psi_\alpha^i$'s has a realization and in case it has, it is unique. For all $\alpha<\gamma$, let us denote by $\gamma_\alpha<\kappa$ the least $i$ such that $\Psi_\alpha^i$'s has no realization (in case it exists). Notice that if $\gamma_\alpha\leq j$, then $\Psi_\alpha^j$'s has no realization.

For all $\alpha<\gamma$, let us define $\Psi_\alpha$ as follows:
\begin{itemize}
    \item $\bigwedge_{i<\kappa}\exists x\Psi^i_\alpha$ if $\gamma_\alpha$ does not exist.
    \item $\bigwedge_{i<|\gamma_\alpha|}\exists x\Psi^i_\alpha\ \wedge\ \bigvee_{i<|\gamma_\alpha|^+}\neg \exists x\Psi^i_\alpha$ if $\gamma_\alpha$ exists.
\end{itemize}

Let $\Phi^t_\eta(\bar X,\bar Y, y)$ be the formula that says:
\begin{itemize}
    \item $\bigwedge_{\alpha<\gamma}\Psi_\alpha$,
\item conjunction of all first order formulas $\varphi(\bar X,\bar Y, y)$ such that $\models \varphi(\mathcal{A}_t,\mathcal{A}_{t^-},a_t)$,
\item there is no tuple $a$ such that $tp(a/\bar X)$ is non-algebraic, $tp(a/\bar X)\dashv \bar Y$ and if $x_0, \ldots, x_n$ satisfies the formulas $\Psi^i_\alpha$, then $a\downarrow_{\bar X}x_0, \ldots, x_k$.
\end{itemize}


Notice that for $t$ such that $ht_T(t)=0$ we do not need to talk about $\bar Y$ nor $y$.
For $t$ such that $ht_T(t)=0$, we define $\gamma$ as above and for all $\alpha<\gamma$, we define $\Psi_\alpha$ as above. Thus $\Phi^t_\eta(\bar X)=\bigwedge_{\alpha<\gamma}\Psi_\alpha$. 

Finally $\Phi_\eta=\exists\bar X \Phi^t_\eta(\bar X)$ for $t$ the root of $T$.

Now we can proof Theorem \ref{orbit_thm} by showing that $\Phi_\eta$ is the desire formula.

\begin{cproof}[Proof Theorem \ref{orbit_thm}]
$\Leftarrow$) It is clear that if $M^*_\xi\models \Phi_\eta$ and $M^*_{\xi'}\models \Phi_\eta$, then $M_\xi$ and $M_{\xi'}$ have isomorphic QD's and thus $M_\xi\cong M_{\xi'}$.


$\Rightarrow$) We will start by showing the following
\begin{itemize}
    \item $M_\eta^*\models \Phi_\eta$,
    \item if $M_\eta^*\models \Phi_\eta$ and $M_\xi\cong M_\eta$, then $M_\xi^*\models\Phi_\eta$. 
\end{itemize}

For this we will show first that $M_\eta^*\models\Phi_\eta^t(\mathcal{A}_t)$ where $t$ is the root of $T$. For this we will show first that for all $t\in T$ such that $ht_T(t)>0$,  $M_\eta^*\models\Phi_\eta^t(\mathcal{A}_t,\mathcal{A}_{t^-},a_t)$ holds. We will proceed by induction over $rk(t)$.

{\bf Case $rk(t)=0$}. $M_\eta^*\models\Phi_\eta^t(\mathcal{A}_t,\mathcal{A}_{t^-},a_t)$ follows from the construction of $\Phi_\eta^t(\mathcal{A}_t,\mathcal{A}_{t^-},a_t)$ and the fact that $(T_t,a_u,\mathcal{A}_u)_{u\in T_t}$ is a maximal QD of $M_\eta$.

{\bf Case $rk(t)>0$}. As in the construction of $\Phi^t_\eta$, let $Y_\alpha$, $\alpha<\gamma$, be the enumeration of the sets $Y_t^i$. For all $\alpha<\gamma$, let $Y'_\alpha$  be the set of those $i<\kappa$ such that $\psi_\alpha^i(\mathcal{A}_t,x)$ has a realization.
We call this realization as $b_\alpha^i$. We will prove that for all $\alpha<\gamma$, the following hold:
\begin{enumerate}
    \item $|Y'_\alpha|=|Y_\alpha|$;
    \item for all $i\in Y'_\alpha$, $b_\alpha^i\not\downarrow_{\mathcal{A}_t}\bigcup_{\beta\leq \alpha}\bigcup_{j\in Y_\beta}a_{t^\frown j}$;
    \item for all $i\in Y_\alpha$, $a_{t^\frown i}\not\downarrow_{\mathcal{A}_t}\bigcup_{\beta\leq \alpha}\bigcup_{j\in Y_\beta}b_\beta^j$.    
\end{enumerate}

Let $\alpha$ be such that for all $\beta<\alpha$, (1), (2), and (3) hold.
Let $W$ be the set of all $a\in M_\eta^*$ such that it satisfies $\Phi^\alpha_\eta$ and $a\downarrow_{\mathcal{A}_t}\bigcup_{\beta< \alpha}\bigcup_{j\in Y_\beta}a_{t^\frown j}$, and $W'$ be the set of all $a\in M_\eta^*$ such that it satisfies $\Phi^\alpha_\eta$ and $a\downarrow_{\mathcal{A}_t}\bigcup_{\beta< \alpha}\bigcup_{j\in Y_\beta}b_\beta^j$.
Notice that $\{a_t^i\mid i\in Y_\alpha\}$ is a basis of $W$ and $\{b_\alpha^i\mid i\in Y'_\alpha\}$ is a basis of $W'$. So it is enough to show that $W=W'$.
We will show that $W\subseteq W'$, the other direction is is similar.

   We will proceed by contradiction, to show that for all $a\in W$, $a\downarrow_{\mathcal{A}_t}\bigcup_{\beta< \alpha}\bigcup_{j\in Y_\beta}b_\beta^j$ (i.e. $W\subseteq W'$).
Let us assume, for sake of contradiction, that there are $a\in W$, $n<\omega$, and $b_0,\ldots,b_n\in \{b_\beta^i\mid i\in Y'_\beta, \beta<\alpha\}$ such that $a\not\downarrow_{\mathcal{A}_t}b_0\cdots b_n$. Since $a\downarrow_{\mathcal{A}_t}\bigcup_{\beta< \alpha}\bigcup_{j\in Y_\beta}a_{t^\frown j}$, 
\begin{itemize}
    \item [($\ast$)] \begin{center}
        $a\not\downarrow_{\mathcal{A}_t\bigcup_{\beta< \alpha}\bigcup_{j\in Y_\beta}a_{t^\frown j}}b_0\cdots b_n.$
    \end{center}
\end{itemize}
We choose $n$ so that it is the least for which ($*$) holds.
Let us denote $\bigcup_{\beta< \alpha}\bigcup_{j\in Y_\beta}a_{t^\frown j}$ by $Z^*$. 
It is enough to prove the claim below, it contradicts ($\ast$). 
\begin{claim}
    $a\downarrow_{\mathcal{A}_tZ^*}b_0 \cdots b_n$.
\end{claim}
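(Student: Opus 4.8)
The plan is to deduce the claim from the regularity of the types $p_{t^\frown i}$ established in Lemma~\ref{Lemma_5-2}, together with the minimality in the choice of $n$. Write $B=\mathcal{A}_t\cup Z^*$, where $Z^*=\bigcup_{\beta<\alpha}\bigcup_{j\in Y_\beta}a_{t^\frown j}$, so that $\mathcal{A}_t\subseteq B$ and, since $a\in W$, $a\downarrow_{\mathcal{A}_t}B$. Each $b_m$ has the form $b^{i_m}_{\beta_m}$ with $\beta_m<\alpha$ and $i_m\in Y'_{\beta_m}$; by definition of $\Psi^{i_m}_{\beta_m}$ it satisfies $\exists\bar Z\,\Phi^{\beta_m}_\eta(\bar Z,\mathcal{A}_t,b_m)$, and since $\Phi^{\beta_m}_\eta=\Phi^{t^\frown l}_\eta$ incorporates $tp(a_{t^\frown l}/\mathcal{A}_t)$ for $l\in Y_{\beta_m}$ (types over the model $\mathcal{A}_t$ being complete), we get $b_m\models p_{t^\frown l_m}$ for any $l_m\in Y_{\beta_m}$; likewise $a\models p_{t^\frown i_0}$ for any $i_0\in Y_\alpha$ by the definition of $W$. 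As the $Y_\beta$ are increasing intervals of $\kappa^+$ and $\beta_m<\alpha$, every such $l_m$ is strictly below $i_0$. Finally $n\ge 1$: by the induction hypothesis $(2)$ at the level $\beta_0<\alpha$, $b_0\not\downarrow_{\mathcal{A}_t}\bigcup_{\delta\le\beta_0}\bigcup_{j\in Y_\delta}a_{t^\frown j}$, hence $b_0\not\downarrow_{\mathcal{A}_t}B$ by monotonicity, and Lemma~\ref{Lemma_5-2} applied with $A:=B$ gives $a\downarrow_B b_0$; so every one-element tuple $(b)$ from the pool $\{b^i_\beta:\beta<\alpha,\ i\in Y'_\beta\}$ satisfies $a\downarrow_{\mathcal{A}_tZ^*}b$, whence $n\ge 1$.

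Next I would extract the two independence facts feeding into the final step. By the minimality of $n$, every tuple of at most $n$ elements from the pool is independent of $a$ over $\mathcal{A}_tZ^*=B$; in particular $a\downarrow_B b_0\cdots b_{n-1}$. Combined with $a\downarrow_{\mathcal{A}_t}B$, transitivity of non-forking gives $a\downarrow_{\mathcal{A}_t}B\cup\{b_0,\ldots,b_{n-1}\}$. On the other hand, the induction hypothesis $(2)$ at the level $\beta_n<\alpha$ yields $b_n\not\downarrow_{\mathcal{A}_t}\bigcup_{\delta\le\beta_n}\bigcup_{j\in Y_\delta}a_{t^\frown j}$, and this set is contained in $Z^*$, hence in $B\cup\{b_0,\ldots,b_{n-1}\}$; by monotonicity $b_n\not\downarrow_{\mathcal{A}_t}B\cup\{b_0,\ldots,b_{n-1}\}$.

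Now I would apply Lemma~\ref{Lemma_5-2} a second time, with $A:=B\cup\{b_0,\ldots,b_{n-1}\}$ (which contains $\mathcal{A}_t$), with $a\models p_{t^\frown i_0}$ and $b_n\models p_{t^\frown l_n}$ where $l_n\le i_0$, using $a\downarrow_{\mathcal{A}_t}A$ and $b_n\not\downarrow_{\mathcal{A}_t}A$ just obtained: the lemma gives $a\downarrow_A b_n$, i.e.\ $a\downarrow_{B\cup\{b_0,\ldots,b_{n-1}\}}b_n$. Combining this with $a\downarrow_B b_0\cdots b_{n-1}$ via transitivity yields $a\downarrow_B b_0\cdots b_n$, that is $a\downarrow_{\mathcal{A}_tZ^*}b_0\cdots b_n$, which is the claim (and contradicts $(\ast)$). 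I expect the only real care needed to be in the forking calculus — that each use of monotonicity and transitivity of non-forking is in its correct form, and that it is precisely the fact that $\mathcal{A}_t$ is a model which makes $p_{t^\frown i_0}$ stationary and Lemma~\ref{Lemma_5-2} applicable — and in reading off ``$b_m\models p_{t^\frown l_m}$'' and the non-independence of $b_m$ over $\mathcal{A}_t$ from the $a_{t^\frown j}$'s out of the definitions of $\Psi^i_\beta$ and of the induction hypothesis; once these are in place, the regularity of $p_{t^\frown i_0}$ does all the work and the induction on the tuple length $n$ closes the argument.
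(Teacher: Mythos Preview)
Your proof is correct and follows essentially the same strategy as the paper: use the minimality of $n$ to get $a\downarrow_{\mathcal{A}_tZ^*}b_0\cdots b_{n-1}$, use the induction hypothesis (2) to get $b_n\not\downarrow_{\mathcal{A}_t}Z^*$, feed these into Lemma~\ref{Lemma_5-2} to obtain $a\downarrow_{\mathcal{A}_tZ^*b_0\cdots b_{n-1}}b_n$, and conclude by transitivity. The only difference is cosmetic: you apply Lemma~\ref{Lemma_5-2} directly with $A=\mathcal{A}_tZ^*\cup\{b_0,\ldots,b_{n-1}\}$, whereas the paper phrases the same step via orthogonality of the types over that set and invokes the additional fact $b_n\downarrow_{\mathcal{A}_tZ^*}b_0\cdots b_{n-1}$; your route is slightly more direct and does not need this extra independence. (Your separate treatment of the case $n=0$ is harmless but unnecessary, since the main argument already covers it with the empty tuple.)
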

\begin{cproof}
Since by the induction assumption $b_n\not\downarrow_{\mathcal{A}_t}Z^*$ and by the choice of $n$, $a\downarrow_{\mathcal{A}_t Z^*}b_0\cdots b_{n-1}$, by Lemma \ref{Lemma_5-2}, $tp(a,\mathcal{A}_tZ^* b_0 \cdots b_{n-1})$ and $tp(b_n,\mathcal{A}_tZ^* b_0 \cdots b_{n-1})$ are orthogonal. Since $$b_n\downarrow_{\mathcal{A}_t\bigcup_{\beta< \alpha}\bigcup_{j\in Y_\beta}a_{t^\frown j}}b_0\cdots b_{n-1},$$ we have $a\downarrow_{\mathcal{A}_tZ^*b_0\cdots b_{n-1}}b_n$.
Thus by transitivity, $a\downarrow_{\mathcal{A}_tZ^*}b_0 \cdots b_n$.
    
     
 \end{cproof}

We conclude that $|Y'_\alpha|=|Y_\alpha|$, and items (2) and (3) follow from the fact that $\{a_t^i\mid i\in Y_\alpha\}$ and $\{b_\alpha^i\mid i\in Y'_\alpha\}$ are basis of $W$ and $W'$, respectively. We conclude that $M_\eta^*\models\Phi_\eta^t(\mathcal{A}_t,\mathcal{A}_{t^-},a_t)$.
Following the same argument, we show that $M_\eta^*\models \Phi_\eta^t(\mathcal{A}_t)$ holds for $t$ the root of $T$. So  $M_\eta^*\models \Phi_\eta$.

Notice that above, the well-orders $<^n$ were used only to express the dimensions in the required infinitary logic. Neither of these dimensions nor the formulas that express them depend on the choice of the well-orders, as long as the order type of the orders is $\kappa$. Thus $M^*_\eta\models\Phi_\eta$ is independent of the choice of the well-orders in $M^*_\eta$.

To prove the second item, let $\xi$ be such that $M_\xi\cong M_\eta$ and $\Pi: M_\eta \rightarrow M_\xi$ be an isomorphism. 
Let us define the following orders, $<^{n}_\eta$ for $n<\omega$, on $M_\eta$: $$x<^{n}_\eta y\ \Leftrightarrow\ \Pi(x)<^n \Pi(y)$$
where $<^n$ is the well-ordering of $\kappa^n$ of order type $\kappa$ in $M^*_\xi$. 
Let $M_\eta^{**}$ be the expansion of $M_\eta$ to $L^*$ with the orders $<^{n}_\eta$ as the interpretation of the well-orderings. Thus $\Pi$ is an isomorphism between $M_\xi^*$ and $M_\eta^{**}$. 
Since $\Phi_\eta$ is independent of the choice of the well-orders (as far as the order type of the orders is $\kappa$), we conclude that $M_\eta^{**}\models \Phi_\eta$. So $M_\xi^{*}\models \Phi_\eta$, as wanted.
\end{cproof}

\begin{cor}\label{cor515}
    Let $\mathcal{T}$ be a countable $\omega$-stable NDOP shallow $L$-theory. Suppose $\kappa$ is a regular cardinal such that $\kappa^\omega=\kappa$. If the number of isomorphism classes is at most $\kappa$, then $\cong_\mathcal{T}$ is $\kappa$-Borel.
\end{cor}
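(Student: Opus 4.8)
The plan is to reduce the corollary to Theorem~\ref{orbit_thm} together with the fact that the $\kappa$-Borel subsets of $\kappa^\kappa\times\kappa^\kappa$ are closed under complements, $\kappa$-unions and $\kappa$-intersections. I would first record two auxiliary observations. (a) The set $C:=\{\eta\in\kappa^\kappa\mid M_\eta\models\mathcal{T}\}$ is $\kappa$-Borel: for each first order $L$-sentence $\psi$ one has $\{\eta\mid M_\eta\models\psi\}=\{\eta\mid M_\eta^*\models\psi\}$, which is $(\kappa,\kappa^{<\omega})$-Borel by the lemma of this section on $L^*_{\kappa^+\lambda}$-definability (applied with $\lambda=\omega$), hence $\kappa$-Borel since $\kappa^{<\omega}=\kappa$; as $\mathcal{T}$ is a countable set of such sentences, $C$ is an intersection of at most $\kappa$ many $\kappa$-Borel sets, so both $C$ and $\kappa^\kappa\backslash C$ are $\kappa$-Borel. (b) If $A,B\subseteq\kappa^\kappa$ are $\kappa$-Borel, then $A\times B$ is $\kappa$-Borel in $\kappa^\kappa\times\kappa^\kappa$: the coordinate projections $pr_1,pr_2$ are $\kappa$-continuous (the preimage of a basic $\kappa$-open set $N_\eta$ is the basic $\kappa$-open set $N_\eta\times\kappa^\kappa$, respectively $\kappa^\kappa\times N_\eta$), so by the variant of Fact~\ref{funcion_borel} for maps into product spaces, $pr_1^{-1}[A]=A\times\kappa^\kappa$ and $pr_2^{-1}[B]=\kappa^\kappa\times B$ are $\kappa$-Borel, and $A\times B$ is their intersection.

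\textbf{Main steps.} Every $L$-structure of cardinality $\kappa$ is isomorphic to $M_\eta$ for some $\eta\in\kappa^\kappa$ (transport the structure along a bijection of its universe with $\kappa$ and read off $\eta$ via Definition~\ref{struct}). Hence, using the hypothesis, I can fix a family $\{\eta_i\mid i<\mu\}$ with $\mu\le\kappa$, each $M_{\eta_i}\models\mathcal{T}$, that meets every isomorphism class of models of $\mathcal{T}$ of cardinality $\kappa$. Directly from the definition of $\cong_\mathcal{T}$ and this choice,
\[
\cong_\mathcal{T}\;=\;\bigl((\kappa^\kappa\backslash C)\times(\kappa^\kappa\backslash C)\bigr)\;\cup\;\bigcup_{i<\mu}\bigl(\Orb(M_{\eta_i})\times\Orb(M_{\eta_i})\bigr) ,
\]
since a pair $(\eta,\xi)$ lies in $\cong_\mathcal{T}$ exactly when either $M_\eta,M_\xi\not\models\mathcal{T}$, or $M_\eta,M_\xi\models\mathcal{T}$ and $M_\eta\cong M_\xi$, the latter being equivalent to $M_\eta$ and $M_\xi$ belonging to a common $\Orb(M_{\eta_i})$. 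By Theorem~\ref{orbit_thm} every $\Orb(M_{\eta_i})$ is $\kappa$-Borel, so by (b) each set on the right-hand side is $\kappa$-Borel; being a union of at most $\kappa$ many $\kappa$-Borel subsets of $\kappa^\kappa\times\kappa^\kappa$, $\cong_\mathcal{T}$ is $\kappa$-Borel, as wanted.

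\textbf{Main obstacle.} There is no real obstacle here: all the substance lies in Theorem~\ref{orbit_thm}, and the rest is bookkeeping. The only point requiring some care, precisely because we do not assume $\kappa^{<\kappa}=\kappa$, is to verify that the closure properties used above --- $\kappa$-Borelness of $C$, of finite products, and of $\kappa$-indexed unions in the product space --- genuinely hold in the present weaker setting; each follows from the definitions and the variant of Fact~\ref{funcion_borel} already used in the proof of Theorem~\ref{iso_borel}, so the work is one of careful invocation rather than of new ideas.
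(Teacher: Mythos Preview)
Your proposal is correct and is precisely the argument the paper has in mind: the corollary is stated without proof immediately after Theorem~\ref{orbit_thm}, and the intended derivation is exactly the one you give---decompose $\cong_{\mathcal T}$ as $(\kappa^\kappa\setminus C)^2$ together with a $\kappa$-union of sets $\Orb(M_{\eta_i})\times\Orb(M_{\eta_i})$, apply Theorem~\ref{orbit_thm} to each orbit, and use the closure of $\kappa$-Borel sets under complements, finite products, and $\kappa$-unions. Your auxiliary observations (a) and (b) are the right justifications for the bookkeeping steps.
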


Notice that an $\omega$-stable NDOP shallow $L$-theory is a classifiable shallow theory. By Shelah's Main Gap Theorem, the number of isomorphism classes of a classifiable shallow theory is less than $\beth_{\omega_1}(|\alpha|)$, where $\kappa=\aleph_\alpha$. Thus if $\kappa=\aleph_\alpha$ is a regular cardinal such that $\kappa^\omega=\kappa$ and $\beth_{\omega_1}(|\alpha|)\leq \kappa$, and $\mathcal{T}$ is an $\omega$-stable NDOP shallow $L$-theory, then $\cong_\mathcal{T}$ is $\kappa$-Borel. 

\begin{remark}
    Let $\mathcal{T}$ be a countable stable theory, $\kappa$ regular and $\kappa^\omega=\kappa$. Then the set of all $\eta$ such that $M_\eta\models \mathcal{T}$ is strongly $\omega$-saturated is $\kappa$-Borel. 
\begin{cproof}
    It is enough to find a $L_{\kappa^+\omega_1}$-sentence $\varphi$ such that for all $\eta$, $M_\eta\models \mathcal{T}$ is strongly $\omega$-saturated if and only if $M_\eta\models\varphi$.

    We construct $\varphi$ as follows. Let $\langle P^n_i\mid i<2^\omega\rangle$ be the list of all complete $n$-types $p(x)$ over the empty set. For all $i<2^\omega$ and $0<m<\omega$, let $\langle\theta_j^{im}\mid j<\omega\rangle$ be the list of all formulas $\theta(u,v,x)$ such that if $a\models P^n_i$, then $\theta(u,v,a)$ defines a finite equivalence relation on $M^m$. For all $i<2^\omega$ and $0<m<\omega$, let $\langle q^{im}_k\mid k<2^\omega\rangle$ the list of all complete types $q(x,u_j)_{j<\omega}$ over the empty set such that $q^{im}_k\vdash P^n_i$ and if $(a,a_j)_{j<\omega}\models q^{im}_k$, then there is $b$ such that $\models \theta^{im}_j(a_j,b,a)$ for all $j<\omega$. For all $i<2^\omega$ and $0<m<\omega$, let $$\Psi^i_m=\forall u_0\forall u_1\cdots (\bigwedge_{k<2^\omega}(\bigwedge q_k^{im}\rightarrow \exists v\bigwedge_{j<\omega}\theta_j^{im}(u_j,v,x)))).$$
    Now, for all $n<\omega$ and $0<m<\omega$, let $\varphi^n_m=\forall x \bigwedge_{i<2^\omega}(\bigwedge P^n_i\rightarrow \Psi^i_m)$. Finally, let $$\varphi=\bigwedge \mathcal{T}\ \wedge\bigwedge_{n<\omega,\ 0<m<\omega} \varphi^n_m.$$ 
    Since for every $M\models \mathcal{T}$ and for every $a\in M^n$, every equivalence class of every equivalence relation definable over $a$ has a representative in $M$; $M\models \varphi$ implies that $M$ is strongly $\omega$-saturated. The other direction is clear.
\end{cproof}
\end{remark}

\begin{thm}
    Let $\mathcal{T}$ be a countable superstable NDOP shallow $L$-theory. Suppose $\kappa$ is a regular cardinal such that $\kappa^{(2^\omega)}=\kappa$. If $M_\xi\models \mathcal{T}$ is strongly $\omega$-saturated, then $\Orb(M_\xi)$ is $\kappa$-Borel. 
\end{thm}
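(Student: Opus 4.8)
The plan is to mimic the structure of Theorem~\ref{orbit_thm}, but now working with superstable NDOP shallow theories rather than $\omega$-stable ones, and restricting attention to strongly $\omega$-saturated models. The first ingredient is a decomposition theory analogous to the maximal QD's used above: for a superstable NDOP shallow theory, a strongly $\omega$-saturated model $M_\xi$ admits a (maximal) tree-decomposition $(T,a_t,\mathcal A_t)_{t\in T}$ where now the $\mathcal A_t$ are $F^\omega_{\aleph_0}$-prime (or $\mathbf{F}_{\aleph_0}^{\aleph_0}$-prime, i.e. prime over countable sets in the sense appropriate to superstable theories) rather than primary, $T$ is a subtree of $(\kappa^+)^{<\omega}$ of rank bounded via shallowness, and two strongly $\omega$-saturated models are isomorphic iff they have isomorphic decompositions. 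This is the superstable analogue of the material surrounding Definitions~\ref{good_labelled_tree_empt}--\ref{QD_over} and the corollary after Lemma~\ref{Lemma_5-2}, and one cites the relevant parts of Shelah's classification machinery (e.g. \cite{MR918762}) for existence, regularity of the types $p_t$, and the K-minimality that yields ``isomorphic decompositions $\Rightarrow$ isomorphic models''.

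Next I would carry out the induction on $rk(t)$ to build $L^*_{\kappa^+\omega_1}$-formulas $\Phi^t_\xi(\bar X,\bar Y,y)$ exactly as in the build-up to the proof of Theorem~\ref{orbit_thm}: a leaf contributes the conjunction of its first-order type over $(\mathcal A_{t^-},a_t)$ together with a maximality clause; an interior node enumerates the isomorphism-type intervals $Y^i_t$, defines the $\Psi^i_\alpha$ picking out $<^n$-least independent realizations, packages the dimensions $|Y_\alpha|$ as (possibly infinite) conjunctions/disjunctions, and adds a maximality clause forbidding further independent non-algebraic orthogonal-over-$\bar Y$ types. Two points need care relative to the $\omega$-stable case. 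First, the types here are stationary but the relevant ``prime model over a countable set'' is $F^\omega_{\aleph_0}$-prime, so the clause ``$\mathcal A_t=\mathcal A_{t^-}[a_t]$'' and the domination/parallelism facts (the analogues of Fact~5.7 and Lemma~\ref{Lemma_5-2}) must be re-derived in the superstable setting; strong $\omega$-saturation is what makes these countable prime hulls behave well and is used essentially here. Second, there are $2^\omega$ many types over $\emptyset$, so the formula $\Phi_\xi$ has size bounded by $\kappa^{(2^\omega)}\cdot\kappa^+$; this is why the hypothesis is $\kappa^{(2^\omega)}=\kappa$ rather than $\kappa^\omega=\kappa$ — one wants $\Phi_\xi\in L^*_{\kappa^+\omega_1}$ (conjunctions of length $<\kappa^+$, nested quantifier blocks of length $<\omega_1$), and one should double-check that all the infinitary conjunctions occurring (the lists indexed by $2^\omega$ as in the Remark preceding this theorem, the dimension clauses indexed by ordinals $<\kappa^+$) stay within these bounds under $\kappa^{(2^\omega)}=\kappa$.

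Then, as in Theorem~\ref{orbit_thm}, one checks $M_\xi^*\models\Phi_\xi$ by the same inductive argument showing the $\Phi^t_\xi(\mathcal A_t,\mathcal A_{t^-},a_t)$ hold along the chosen decomposition (the key sub-claim, that the sets $W$, $W'$ of independent realizations of $\Phi^\alpha_\xi$ over the respective already-chosen tuples coincide, goes through verbatim using the regularity of the $p_t$ and orthogonality calculus), observes that $\Phi_\xi$ does not depend on the choice of the well-orders $<^n$ (so that pulling back an isomorphism $M_{\xi'}\cong M_\xi$ along its action on the $<^n$ gives $M_{\xi'}^*\models\Phi_\xi$), and conversely that $M_{\xi'}^*\models\Phi_\xi$ forces $M_{\xi'}$ to have a maximal decomposition isomorphic to that of $M_\xi$, hence $M_{\xi'}\cong M_\xi$ among strongly $\omega$-saturated models. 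Finally, $\Orb(M_\xi)$ equals the set of $\eta$ with $M_\eta^*\models\Phi_\xi\wedge(\text{``}M_\eta\text{ strongly }\omega\text{-saturated''})$; the second conjunct is $L_{\kappa^+\omega_1}$-expressible and $\kappa$-Borel by the Remark preceding this theorem, and by Lemma~5.1 any $L^*_{\kappa^+\omega_1}$-definable (in $M^*_\eta$) set is $(\kappa,\kappa^{<\omega_1})$-Borel $=(\kappa,\kappa^\omega)$-Borel $=\kappa$-Borel since $\kappa^{(2^\omega)}=\kappa$ implies $\kappa^\omega=\kappa$. The main obstacle I expect is the first step: transporting the QD-machinery and the key dimension-counting lemma (Lemma~\ref{Lemma_5-2}) from the $\omega$-stable case to the superstable case, where one must work with $F^\omega_{\aleph_0}$-primary models and use strong $\omega$-saturation precisely to recover the existence of the relevant prime hulls, the regularity/orthogonality calculus, and the ``isomorphic decompositions imply isomorphic models'' conclusion.
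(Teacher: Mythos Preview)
Your proposal is correct and matches the paper's approach exactly: the paper's entire proof is the one-line remark that ``the proof is essentially the same as that of Theorem~\ref{orbit_thm}; only obvious changes are needed, e.g.\ Morley rank is replaced by $U$-rank.'' Your elaboration of what those ``obvious changes'' are (passing to $F^\omega_{\aleph_0}$-prime hulls, using strong $\omega$-saturation to make the countable prime hulls and regularity/domination calculus work, and tracking that the $2^\omega$ many types force the hypothesis $\kappa^{(2^\omega)}=\kappa$) is more detailed than what the paper provides and is on target.
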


The proof of this theorem is essentially the same as that of Theorem \ref{orbit_thm}. Only obvious changes are needed, e.g. Morley rank is replaced by U-rank.

\section{Open questions}
\begin{enumerate}
    \item Suppose $2^\omega=2^{\omega_1}=\omega_2$. What can be said about $\omega_1$-Borelness of (orbits of) models of cardinality $\aleph_1$? 
    \item  Suppose $\kappa^\omega=\kappa=\cf(\kappa)$. Does it follow that a countable first order theory $T$
is classifiable and shallow if and only if $\cong_T$ is $\kappa$-Borel on $\kappa^\kappa$?
    \item Are the following inclusions proper:
    \begin{itemize}
        \item $\kappa$-Borel $\subseteq \kappa$-$\Delta_1^1$.
        \item $\kappa$-$\Delta_1^1\subseteq (\kappa,\kappa^{<\kappa})$-\borelstar.
    \end{itemize}
    \item Does $(\kappa,\kappa^{<\kappa})$-\borelstar$=\kappa$-$\Sigma_1^1$ imply $\kappa^{<\kappa}=\kappa$.
    \item Do Generalized Baire Spaces have a separation property, under the assumption $\kappa^{<\kappa}>\kappa$? 
\end{enumerate}

\bibliographystyle{plain}
\bibliography{HyMoVa}

\end{document}